\title{Un théorème de Paley-Wiener matriciel\\ pour un groupe réductif $p$-adique non connexe}
\author{Joël Cohen\,\footnote{Ce travail a bénéficié d'une bourse ANR blanc JIVARO (référence ANR-08-BLAN-0259-02).}}
\date{\today}
\newtheorem{thm}{Théorème}[section]
\newtheorem{defn}[thm]{Définition}
\newtheorem{prop}[thm]{Proposition}
\newtheorem{lemme}[thm]{Lemme}
\newtheorem{rem}[thm]{Remarque}
\newtheorem{ex}[thm]{Exemple}
\newcommand{\Z}{\mathbb{Z}}
\newcommand{\R}{\mathbb{R}}
\newcommand{\C}{\mathbb{C}}
\newcommand{\Orb}{\mathcal{O}}
\newcommand{\Hecke}{\mathcal{H}}
\newcommand{\hecke}{\mathcal{C}_{c}^{\infty}}
\newcommand{\cat}{\mathcal{C}}
\newcommand{\Ind}{\operatorname{Ind}}
\newcommand{\Irr}{\operatorname{Irr}}
\newcommand{\GL}{\operatorname{GL}}
\newcommand{\Hom}{\operatorname{Hom}}
\newcommand{\End}{\operatorname{End}}
\newcommand{\Supp}{\operatorname{Supp}}
\newcommand{\Rat}{\operatorname{Rat}}
\newcommand{\coker}{\operatorname{coker}}
\newcommand{\Rep}{\mathcal{R}} 
\newcommand{\efo}{\mathcal{A}} 
\newcommand{\D}{\operatorname{d}} 
\newcommand{\pars}[1]{\ensuremath{\left( #1 \right)}} 
\newcommand{\braces}[1]{\ensuremath{\left\{ #1 \right\}}} 
\newcommand{\scal}[1]{\ensuremath{\left\langle #1 \right\rangle}} 
\newcommand{\ints}[1]{\ensuremath{\left[\!\left[ #1 \right]\!\right]}} 
\def\revdots{\mathinner{\mkern1mu\raise1pt\vbox{\kern7pt\hbox{.}}\mkern2mu\raise4pt\hbox{.}\mkern2mu \raise7pt\hbox{.}\mkern1mu}} 
\newcommand{\Tr}{\operatorname{Tr}}
\newcommand{\Id}{\operatorname{Id}}
\newcommand{\ad}{\operatorname{ad}}
\newcommand{\Ad}{\operatorname{Ad}}
\newcommand{\et}{\qquad \textnormal{ et } \qquad}
\begin{document}

\ifpdf
\DeclareGraphicsExtensions{.pdf, .jpg, .tif}
\else
\DeclareGraphicsExtensions{.eps, .jpg}
\fi

\maketitle

Soit $F$ un corps local non archimédien de caractéristique nulle, et $G$ (le groupe des points sur $F$ d') un groupe réductif défini sur $F$ non nécessairement connexe. On note $G^0$ sa composante connexe de l'identité, on suppose le quotient $G/G^0$ commutatif. Pour $f : G \to \C$ une fonction lisse à support compact et $(\pi, V_{\pi}) \in \Rep(G)$ une représentation lisse de $G$, on fixe une mesure de Haar $\D g$ sur $G$, et on définit la transformée de Fourier de $f$ en $\pi$, par

\[
	\widehat{f}(\pi) = \int_{G} \pi(g) \, f(g) \, \D g
\]
C'est un endomorphisme de l'espace vectoriel $V_{\pi}$ sous-jacent à $\pi$. On fournit une description de l'image de la transformée de Fourier, c'est-à-dire étant donné, pour toute représentation $(\pi, V_{\pi})$ dans une certaine famille d'induites, un endomorphisme $\varphi(\pi)$ de l'espace vectoriel $V_{\pi}$, à quel condition nécessaire et suffisante existe-t-il une fonction $f$ lisse à support compact telle que $\varphi(\pi) = \widehat{f}(\pi)$ pour tout $\pi$ dans la famille ?

Ce travail fait partie de ma thèse de doctorat préparée sous la direction de Volker Heiermann. Je remercie Jean-Loup Waldspurger pour les corrections et commentaires utiles qu'il m'a apporté.

\tableofcontents

\ \\
\ \\

\section{Introduction} 
\label{sec:introduction}

Soit $F$ un corps local non archimédien localement compact (i.e. de corps résiduel fini) de caractéristique $0$\,\footnote{Cette hypothèse de caractéristique nulle n'est probablement pas nécessaire, nous l'avons incluse par prudence car nous utilisons les résultats de certains articles qui le supposent.}. Dans tout le document on commet l'abus de langage courant qui consiste à identifier les groupes algébriques et les groupes de leurs points sur $F$. Soit $G$ (le groupe des points sur $F$ d') un groupe réductif défini sur $F$. On note $G^0$ sa composante connexe de l'identité, dite composante neutre. On sait que c'est un sous-groupe distingué d'indice fini. On supposera de plus que le quotient $G/G^0$, dit groupe des composantes, est commutatif. On fixe $S \subset G$ un système de représentants de $G/G^0$. 

On note $\Rep(G)$ la catégorie des représentations complexes lisses de $G$, et $\Hecke(G) = \hecke(G)$ l'algèbre de Hecke des distributions complexes localement constantes à support compact munies du produit de convolution par rapport à $\D g$ (remarquons que $G$ est unimodulaire parce que $G^0$ l'est). On adoptera sans précision des notations analogues pour n'importe quel groupe localement compact totalement discontinus (en particulier pour les sous-groupes fermés de $G$).

Le groupe $G$ agit sur lui-même par adjonction. C'est-à-dire que si $g \in G$, on note $g \cdot x = \Ad(g)(x) = gxg^{-1}$ et $\Ad$ est une action continue. Il s'en déduit une action sur les parties de $G$, que l'on note encore par un point $g \cdot X = gXg^{-1}$ qui induit une action sur les sous-groupes de $G$. Si $H \subset G$ est un sous-groupe fermé de $G$, et $\sigma \in \Rep(H)$ une représentation lisse de $H$, alors on définit une représentation lisse de $g \cdot H$, notée $g \cdot \sigma$ par $(g \cdot \sigma)(g \cdot h) = \sigma(h)$. En particulier, cela définit une action sur les représentations de chaque sous-groupe distingué de $G$ (dont $G$ lui-même).

Pour $f \in \Hecke(G)$, on définit la transformée de Fourier de $f$ en $(\pi, V_{\pi}) \in \Rep(G)$, par

\[
	\widehat{f}(\pi) = \int_{G} \pi(g) \, f(g) \, \D g
\]
C'est un endomorphisme de l'espace vectoriel $V_{\pi}$ sous-jacent à $\pi$. Notre but est de caractériser l'image de la transformée de Fourier : étant donné une sous-catégorie $\cat$ pleine de $\Rep(G)$ (le domaine de définition) suffisamment grande, et la donnée pour toute représentation $\pi \in \cat$ d'un endomorphisme $\varphi(\pi) \in \End_{\C}(\pi)$, à quelles conditions $\varphi$ est-il de la forme $\varphi = \widehat{f}$ avec $f \in \Hecke(G)$.

Dans le cas où $G$ est connexe, c'est un théorème connu (voir \cite{BDK} pour le théorème portant sur la trace de cet opérateur dans le cas connexe, \cite{Rogawski} pour une version non connexe, et \cite{Heiermann} et \cite{Bernstein} théorème 25 pour la version matricielle dans le cas connexe), la transformée de Fourier vérifie 3 conditions : lissité (ou finitude), commutation aux entrelacements, polynomialité. Dans le cas non connexe (sous l'hypothèse que $G/G^0$ est commutatif), on fournit une description analogue.

\subsection{Le cas connexe} 
\label{sub:le_cas_connexe}

On suppose dans cette section que $G$ est connexe. On considère la famille des endomorphismes $\varphi_{P, \sigma} = \widehat{f}(\Ind_{P}^{G} \sigma)$ où $P = MN$ est un sous-groupe parabolique de $G$ de facteur de Levi $M$, et $\sigma \in \Rep(M)$ une (classe d'isomorphisme de) représentation irréductible cuspidale de $M$. On note $\mathcal{B}(G)$ l'ensemble de tels couples $(P, \sigma)$ et $\cat$ la sous-catégorie pleine de $\Rep(G)$ stable par somme et sous-quotient engendrée (au sens de la définition \ref{defn:engendrer}) par les représentations $\Ind_{P}^{G} \sigma$. La transformée de Fourier $\varphi = \widehat{f}$ vérifie trois conditions : lissité (ou finitude), commutation aux entrelacements, polynomialité.

\subsubsection{Lissité} 
\label{ssub:lissite}
Il existe un sous-groupe ouvert compact $K \subset G$ par lequel $f$ est bi-invariant. En prenant la transformée de Fourier, on trouve

\[
	\varphi = \varphi \, \widehat{e_K} = \widehat{e_K} \, \varphi
\]
Où $e_K \in \Hecke(G)$ désigne la mesure de Haar normalisée sur $K$ (prolongée à $G$ par $0$). Ainsi $\varphi(\pi)$ est donc entièrement déterminé par l'endomorphisme sur l'espace des vecteurs $K$-fixes par restriction (en particulier $\varphi$ est de rang fini sur les représentations admissibles). On dira que $\varphi = \widehat{f}$ est \textbf{lisse}.

\subsubsection{Commutation aux entrelacements} 
\label{ssub:commutation_aux_entrelacements}

Si $\pi_1, \pi_2 \in \cat$ sont des représentations, et $\alpha \in \Hom_{G}(\pi_1, \pi_2)$ un $G$-entrelacement entre elles, alors
\[
	\alpha \circ \varphi(\pi_1) = \varphi(\pi_2) \circ \alpha
\]
On dira que $\varphi$ \textbf{commute aux $G$-entrelacements}. C'est une condition qui assure que la donnée de $\varphi(\pi)$ est naturelle donc ne dépend pas du choix d'une réalisation de $\pi$ (formellement, $\varphi$ définit un endomorphisme du foncteur d'oubli $\cat \to \C$-Vect). La propriété permet d'étendre naturellement $\varphi$ aux sous-quotients et sommes directes (ce qui permet d'agrandir $\cat$ pour la rendre stable par ces opérations) de telle sorte que le prolongement continue à commuter aux entrelacements, et coïncide avec $\widehat{f}$ si c'était le cas pour $\varphi$.

\subsubsection{Polynomialité} 
\label{subs:polynomialite}

Si $M$ est un sous-groupe de Levi de $G$, le groupe $X(M)$ des caractères non ramifiés (c'est-à-dire triviaux sur ses sous-groupes compacts) de $M$ est un tore complexe. On vérifie alors que la fonction $\chi \mapsto \widehat{f}(\Ind_{P}^{G} \chi\sigma)$ est polynomiale\,\footnote{Pour parler précisément de polynomialité, on réalise toutes les représentations $\Ind_{P}^{G} \chi\sigma$ dans le même espace vectoriel : Si $K_0$ est un bon sous-groupe compact maximal de $G$ (c'est-à-dire tel que $G = P K_0$), alors la restriction à $K_0$ est un $K_0$-isomorphisme entre $\Ind_{P}^{G} \chi \sigma$ et $\Ind_{P \cap K_0}^{K_0} \sigma$ (voir \cite{Heiermann} p.1).}.


\subsubsection{Le théorème dans le cas connexe} 
\label{ssub:le_theoreme}

Réciproquement on la théorème suivant (voir \cite{Heiermann} théorème 0.1 pour plus de détails).

\begin{thm}[voir \cite{Heiermann} et \cite{Bernstein} théorème 25]
	\label{thm:paley_wiener_connexe}
	 Etant donné, pour tout couple $(P, \sigma) \in \mathcal{B}(M)$, un endomorphisme $\varphi_{P, \sigma} \in \End_{\C}(\Ind_{P}^{G} \sigma)$ vérifiant les conditions suivantes :
	\begin{enumerate}
		\item \textbf{Lissité (ou finitude) :} Il existe un sous-groupe ouvert compact $K \subset G$ tel que $\varphi$ est bi-invariant par $K$ :
		\[
			\varphi = \varphi \, \widehat{e_K} = \widehat{e_K} \, \varphi
		\]
		\item \textbf{Polynomialité :} L'application $\chi \longmapsto \varphi_{P, \chi\sigma}$ est polynomiale.
		\item \textbf{Commutation aux entrelacements :}
			\begin{enumerate}
				\item \textbf{Translation à gauche : } Pour tout $g \in G$,
				
				\begin{equation}
					\lambda(g) \circ \varphi_{P, \sigma} = \varphi_{g \cdot P, g \cdot \sigma} \circ \lambda(g)
				\end{equation}
				où $\lambda(g)$ désigne la translation à gauche des fonctions.
				\item \textbf{Opérateurs d'entrelacement} Si $P' = M N'$ est un autre sous-groupe parabolique de composante de Levi $M$,
				\begin{equation}
					J_{P|(P')}(\sigma) \circ \varphi_{P', \sigma} = \varphi_{P, \sigma} \circ J_{P|P'}(\sigma)
				\end{equation}
				Où l'entrelacement $J_{P|P'}(\sigma)$ est défini, sous réserve de convergence\footnote{C'est-à-dire que si l'on fixe $\sigma$, l'opérateur $J_{P|P'}(\chi\sigma)$ est défini par l'intégrale ci-dessous pour $\chi \in X(M)$ dans un certain cône de convergence.}, par
				\[
					J_{P|P'}(\sigma) = \int_{N / N \cap N'} \lambda(n) \, \D n 
				\]
			\end{enumerate}
	\end{enumerate}
	Alors il existe une unique fonction $f \in \Hecke(G)$ telle que pour tout $(P, \sigma) \in \mathcal{B}(M)$, on ait
	\[
		\varphi_{P, \sigma} = \widehat{f}(\Ind_{P}^{G} \sigma)
	\]
	Par ailleurs, on dispose d'une formule d'inversion (voir \cite{Heiermann} proposition 0.2).
\end{thm}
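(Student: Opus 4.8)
Esquissons une stratégie (la démonstration est due à Heiermann et Bernstein, voir \cite{Heiermann} et \cite{Bernstein}) ; elle reposera sur la théorie du centre de Bernstein. L'unicité de $f$ sera immédiate : la transformée de Fourier est injective sur $\Hecke(G)$ (fait classique, par exemple via la formule de Plancherel), et la donnée des $\widehat{f}(\Ind_{P}^{G}\sigma)$, $(P,\sigma) \in \mathcal{B}(G)$, détermine $\widehat{f}(\pi)$ pour toute représentation irréductible $\pi$ — cette dernière étant un sous-quotient d'une induite parabolique à partir de son support cuspidal.

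Pour l'existence, on commencera par se ramener à un niveau fixé : en choisissant $K$ compact ouvert par lequel $\varphi$ est bi-invariant (condition 1), il suffira de construire $f \in \Hecke(G,K) := e_{K}\,\Hecke(G)\,e_{K}$. La décomposition de Bernstein donne $\Hecke(G,K) = \bigoplus_{\lie{s}} \Hecke(G,K)_{\lie{s}}$, somme directe \emph{finie} d'algèbres unitaires indexée par les composantes $\lie{s} = [M,\sigma]_{G}$ de $\Rep(G)$ possédant des vecteurs $K$-fixes. Chaque $\Ind_{P}^{G}\sigma$ appartient à une unique composante, et un entrelacement entre induites de composantes distinctes est nul ; les conditions de commutation de l'énoncé respecteront donc cette décomposition, et il suffira de traiter chaque composante $\lie{s}$ séparément, puis de recoller les fonctions $f_{\lie{s}}$ obtenues.

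Fixons une composante $\lie{s} = [M,\sigma]_{G}$ et un parabolique $P = MN$ de Levi $M$. Toute représentation de $\mathcal{B}(G)$ dans $\lie{s}$ est isomorphe à une induite $\Ind_{P}^{G}(\chi\sigma)$, $\chi$ décrivant un tore complexe $\Theta$ (celui des classes des $\chi\sigma$) ; on les réalisera toutes sur l'espace commun $V = \Ind_{P \cap K_{0}}^{K_{0}}\sigma$, de sorte que la polynomialité (condition 2) exprime que $\chi \mapsto \varphi_{P, \chi\sigma}$ est un élément de $\mathcal{O}(\Theta) \otimes \End_{\C}(V^{K})$. Après normalisation à la Harish-Chandra des opérateurs d'entrelacement $J_{P|P'}$, les conditions de commutation (condition 3) se traduiront exactement en l'invariance de cet élément sous le groupe fini $W_{\sigma} = \{w \in N_{G}(M)/M : w\cdot\sigma \simeq \chi\sigma \text{ pour un caractère non ramifié } \chi\}$, agissant sur $\mathcal{O}(\Theta)$ par permutation des supports cuspidaux et sur $\End_{\C}(V^{K})$ par conjugaison par les opérateurs d'entrelacement normalisés. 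Le point central de la preuve consistera alors à établir que l'image de la transformée de Fourier $\Hecke(G,K)_{\lie{s}} \to \mathcal{O}(\Theta) \otimes \End_{\C}(V^{K})$ est \emph{exactement} le sous-espace des invariants $(\mathcal{O}(\Theta) \otimes \End_{\C}(V^{K}))^{W_{\sigma}}$. L'inclusion de l'image dans les invariants n'est autre que la vérification faite dans l'introduction. Pour la réciproque, on exploitera la structure de la composante — issue de Bernstein (seconde adjonction, lemme géométrique) : $\Hecke(G,K)_{\lie{s}}$ est une algèbre de type fini, module-finie sur la sous-algèbre centrale $\mathcal{O}(\Theta/W_{\sigma})$ (centre de Bernstein de $\lie{s}$), et en un point générique $\chi$ où $\Ind_{P}^{G}\chi\sigma$ est irréductible, l'image de $\Hecke(G,K)_{\lie{s}}$ dans $\End_{\C}(V^{K})$ est $\End_{\C}(V^{K})$ tout entier (théorème de densité de Jacobson). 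Joint à la $W_{\sigma}$-équivariance et à la régularité en $\chi$, cela permettra, par densité de Zariski sur $\Theta$, d'atteindre tout invariant — pourvu que l'on contrôle soigneusement pôles et zéros des opérateurs $J_{P|P'}$ aux points de réductibilité (théorie de Harish-Chandra).

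On recollera enfin les $f_{\lie{s}}$ en une fonction $f \in \Hecke(G,K) \subset \Hecke(G)$ qui convient ; la formule d'inversion annoncée se déduira de l'équivalence de Morita explicite. La principale difficulté résidera dans le point central ci-dessus : identifier précisément les conditions abstraites de commutation aux entrelacements avec la $W_{\sigma}$-invariance concrète, puis établir la surjectivité sur le sous-espace des invariants — ce qui requiert un contrôle fin de la structure de l'algèbre de composante et du comportement rationnel des opérateurs d'entrelacement.
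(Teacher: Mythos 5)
Ce théorème est cité par l'article sans démonstration (renvoi explicite à \cite{Heiermann} théorème 0.1 et \cite{Bernstein} théorème 25) ; il n'y a donc pas de preuve de l'article à laquelle comparer votre esquisse. Ce que vous proposez est une synthèse plausible des preuves citées : réduction au niveau d'un sous-groupe compact ouvert $K$ fixé, décomposition de Bernstein de $\Hecke(G,K)$ en composantes $\lie{s}$, identification des $\Ind_P^G(\chi\sigma)$ d'une composante aux points d'un tore complexe $\Theta$ réalisés sur un espace commun, traduction des conditions de commutation en une invariance par un groupe de Weyl $W_\sigma$, puis recollement. Les grandes lignes sont correctes et cohérentes avec la stratégie de Bernstein.

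Deux réserves néanmoins. D'abord, l'affirmation que vous présentez comme le point central --- l'image de $\Hecke(G,K)_{\lie{s}}$ dans $\mathcal{O}(\Theta)\otimes\End_{\C}(V^K)$ serait \emph{exactement} $(\mathcal{O}(\Theta)\otimes\End_{\C}(V^K))^{W_\sigma}$ après normalisation --- est un raccourci qu'il faudrait justifier avec soin : les conditions de l'énoncé portent sur les opérateurs $J_{P|P'}$ \emph{non normalisés}, qui aux points de réductibilité ont des noyaux non triviaux, et le passage de la commutation avec ces opérateurs à une équivariance polynomiale pour $W_\sigma$ (et retour) est précisément où se concentre la difficulté technique dans \cite{Heiermann} ; votre phrase «~pourvu que l'on contrôle soigneusement pôles et zéros~» reconnaît le problème mais ne le résout pas. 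Ensuite, et c'est le point le plus pertinent pour le reste de cet article, vous reléguez la formule d'inversion à une remarque finale alors que c'est la pièce que l'article utilise effectivement : c'est la proposition 0.2 de \cite{Heiermann}, qui exprime $\varphi_{P,\sigma}$ au moyen de fonctions polynomiales $\xi$ et des opérateurs $J_{P|P'}$, et non la caractérisation abstraite de l'image comme algèbre d'invariants, qui est reprise et généralisée au cas non connexe dans les sections \ref{sub:une_relation_polynomiale} et \ref{sub:une_formule_d_inversion} (propositions \ref{prop:0.2_Ind_P} et \ref{prop:formule_inversion}). Une esquisse calibrée sur l'usage qu'en fait l'article aurait mis cette formule au premier plan.
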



\subsection{Le cas non connexe} 
\label{sub:un_resultat_dans_le_cas_non_connexe}

Expliquons désormais le thèorème de Paley-Wiener dans le cas non connexe. Remarquons d'abord que les conditions de lissité et commutation aux entrelacements sont conservées, car valables sur tout groupe localement profini, de même que l'injectivité de la transformée de Fourier (c'est le lemme de séparation, voir \cite{Zelevinskii} proposition 2.12
). Il reste à trouver un domaine de définition convenable pour la transformée de Fourier, c'est-à-dire une bonne famille de représentations\,\footnote{On demande notamment que les irréductibles soient des sous-représentations d'éléments de cette famille, que l'on sache décrire leurs entrelacements afin de formuler la condition d'entrelacement de manière explicite, et que l'on possède une structure de variété afin d'avoir un analogue de la condition de polynomialité.} de $G$. On adopte pour cela la notion de sous-groupe paraboliques dits \emph{cuspidaux} développée dans \cite{Goldberg}, et la théorie de l'induction parabolique qui en découle. Géométriquement, ils sont construits à partir des sous-groupes paraboliques de $G^0$ : Soit $P^0 = M^0 N$ un sous-groupe parabolique de $G^0$ de composante de Levi $M^0$ et $A$ le tore maximal déployé dans le centre de $M^0$. Alors $M = C_G(A)$, le centralisateur de $A$ dans $G$, est un sous-groupe de Levi cuspidal de $G$, et $P = M N$ est un sous-groupe parabolique cuspidal de $G$. Ces groupes permettent d'avoir une notion de représentations cuspidales et de prolonger des résultats usuels sur l'induction parabolique à la catégorie $\Rep(G)$.  On définit notamment un groupe $X^0(M)$ de caractères non ramifiés qui est un tore complexe (donc une notion de régularité vis-a-vis des paramètres complexes de ces caractères), et des opérateurs d'entrelacements $J$ analogues à ceux construits dans le cas connexe (voir la partie \ref{sec:induction_parabolique_dans_le_groupe_g} pour les définitions). On note $W(M) = N_G(M)/M$. On notera à nouveau $\mathcal{B}(G)$ l'ensemble des couples $(P, \sigma)$ avec $P = MN$ un sous-groupe parabolique cuspidal de $G$ et $\sigma \in \Rep(M)$ une (classe d'équivalence de) représentation irréductible cuspidale de $M$ (voir définition en partie \ref{sec:induction_parabolique_dans_le_groupe_g}), et $\cat$ la sous-catégorie pleine de $\Rep(G)$ stable par somme et sous-quotient engendrée (au sens de la définition \ref{defn:engendrer}) par les représentations $\pi = \Ind_P^G(\sigma)$. On obtient alors un résultat analogue au cas connexe.

\begin{thm}
	\label{thm:paley_wiener}
	 Etant donné, pour tout couple $(P, \sigma) \in \mathcal{B}(M)$, un endomorphisme $\varphi_{P, \sigma} \in \End_G(\Ind_P^G \sigma)$ vérifiant les conditions suivantes :
	\begin{enumerate}
		\item \textbf{Lissité (ou finitude) :} Il existe un sous-groupe ouvert compact $K \subset G$ tel que $\varphi$ est bi-invariant par $K$ :
		\[
			\varphi = \varphi \, \widehat{e_K} = \widehat{e_K} \, \varphi
		\]
		\item \textbf{Polynomialité :} L'application $\chi \longmapsto \varphi_{P, \chi\sigma}$ définie sur $X^0(M)$ est polynomiale.
		\item \textbf{Commutation aux entrelacements :}
			\begin{enumerate}
				\item \textbf{Translation à gauche : } Si $g \in G$ est tel que $\ad_g$ envoie $P,M, \sigma$ sur $P', M', \sigma'$, alors
				\begin{equation}
					\lambda(g) \circ \varphi_{P, \sigma} = \varphi_{P', \sigma'} \circ \lambda(g)
				\end{equation}
				où $\lambda(g)$ désigne la translation à gauche des fonctions.
				\item \textbf{Opérateurs d'entrelacement} Si $P' = M N'$ est un autre sous-groupe parabolique de composante de Levi $M$,
				\begin{equation}
					J_{P|P'}(\sigma) \circ \varphi_{P', \sigma} = \varphi_{P, \sigma} \circ J_{P|P'}(\sigma)
				\end{equation}
				Où l'entrelacement $J_{P|P'}(\sigma)$ est défini, sous réserve de convergence, par
				\[
					J_{P|P'}(\sigma) = \int_{N / N \cap N'} \lambda(n) \, \D n 
				\]
			\end{enumerate}
	\end{enumerate}
	Alors il existe une unique fonction $f \in \Hecke(G)$ telle que pour tout $(P, \sigma) \in \mathcal{B}(M)$, on ait
	\[
		\varphi_{P, \sigma} = \widehat{f}(\Ind_P^G \sigma)
	\]
	En outre on dispose d'une formule d'inversion pour $f$ que l'on explicite en \ref{ssub:une_formule_pour_f_}.
\end{thm}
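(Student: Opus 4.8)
The plan is to reduce to the connected case, Théorème~\ref{thm:paley_wiener_connexe}, by slicing along the finite abelian group $\Gamma = G/G^0$. Since $G^0$ is open of finite index, every $f\in\Hecke(G)$ decomposes uniquely as $f = \sum_{s\in S}\lambda(s)g_s$ with $g_s\in\Hecke(G^0)$ given by $g_s(h) = f(sh)$; taking Fourier transforms at $\pi = \Ind_P^G\sigma$ yields $\widehat f(\pi) = \sum_{s\in S}\pi(s)\,\widehat{g_s}\bigl(\pi|_{G^0}\bigr)$, where each term depends on $\pi$ only through its restriction to $G^0$. Conversely, given a family $\varphi=(\varphi_{P,\sigma})$ satisfying the three conditions, it will suffice to manufacture, for every $s\in S$, an element $g_s\in\Hecke(G^0)$ whose Fourier transform at each parabolically induced representation of $G^0$ is the ``$s$-component'' of $\varphi$ extracted from the identity above, and then to check that $f:=\sum_s\lambda(s)g_s$ satisfies $\widehat f = \varphi$; uniqueness is then free, the separation lemma (\cite{Zelevinskii}, prop.~2.12) holding for $G$ as for any groupe totalement discontinu.

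The bridge between the two worlds is the dictionary of \cite{Goldberg}. To a parabolique $Q_0 = M^0 N_0$ of $G^0$, with $A$ the tore maximal déployé of the centre of $M^0$, one attaches $M = C_G(A)$, a Levi cuspidal of $G$, and $P = MN_0$, a parabolique cuspidal of $G$ with $P\cap G^0 = Q_0$; induction in stages gives $\Ind_{Q_0}^G\tau_0\cong\Ind_P^G\bigl(\Ind_{M^0}^M\tau_0\bigr)$ for $\tau_0$ cuspidale on $M^0$, and Mackey's formula identifies $\bigl(\Ind_{Q_0}^G\tau_0\bigr)\big|_{G^0}\cong\bigoplus_{s\in S}\Ind_{{}^sQ_0}^{G^0}\bigl({}^s\tau_0\bigr)$, the term $s=e$ being $\Ind_{Q_0}^{G^0}\tau_0$, with $\pi(s)$ carrying the $e$-summand onto the $s$-summand. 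Decomposing $\Ind_{M^0}^M\tau_0$ into irréductibles and invoking Clifford theory for the abelian quotient $M/M^0\hookrightarrow\Gamma$ (which also controls the multiplicities above when $\Gamma$ does not act librement on cuspidal supports), one gathers the data $\varphi_{P,\sigma}$ over all irréductibles $\sigma$ lying above $\tau_0$ into a single $\widetilde\varphi\in\End_G\bigl(\Ind_{Q_0}^G\tau_0\bigr)$, the condition~3 d'entrelacement guaranteeing the cross-terms fit together consistently. For $s\in S$ one then defines $\psi^s_{Q_0,\tau_0}\in\End_{G^0}\bigl(\Ind_{Q_0}^{G^0}\tau_0\bigr)$ as the $(e,e)$-block of $\pi(s)^{-1}\widetilde\varphi$ relative to the Mackey decomposition; the translation condition~3(a), applied to $g=s$ and more generally to $g\in G$, is exactly what makes this independent of the choice of $\sigma$ and of $\tau_0$ within its $M$-classe de conjugaison, and coherent as $(Q_0,\tau_0)$ runs over $\mathcal{B}(G^0)$.

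It then remains to see that $(\psi^s_{Q_0,\tau_0})_{(Q_0,\tau_0)\in\mathcal{B}(G^0)}$ satisfies, for each fixed $s$, the three hypotheses of Théorème~\ref{thm:paley_wiener_connexe} for $G^0$. Lissité is immediate ($K\cap G^0$ works). Polynomialité of $\chi_0\mapsto\psi^s_{Q_0,\chi_0\tau_0}$ on $X(M^0)$ follows from polynomialité of $\chi\mapsto\varphi_{P,\chi\sigma}$ on $X^0(M)$ together with the comparison of these tores complexes set up in \cite{Goldberg}. Commutation aux entrelacements of $G^0$ is the core point: the opérateurs $J_{Q_0|Q_0'}(\tau_0)=\int_{N_0/N_0\cap N_0'}\lambda(n)\,\D n$ are literally the restrictions to the Mackey blocks of the $J_{P|P'}(\sigma)$ of condition~3(b) (as $N_0,N_0'\subset G^0$), and the translations by $g\in G^0$ are instances of condition~3(a); since generically $\End_{G^0}\bigl(\Ind_{Q_0}^{G^0}\tau_0\bigr)$ is engendré by the normalised versions of these opérateurs over the stabilisateur of $\tau_0$ in $N_{G^0}(M^0)/M^0$, commuting with all of $\End_{G^0}$ reduces to these relations, which hold by construction (polynomialité handling the non-generic $\tau_0$ by spécialisation). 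Théorème~\ref{thm:paley_wiener_connexe} then produces $g_s\in\Hecke(G^0)$ with $\widehat{g_s}\bigl(\Ind_{Q_0}^{G^0}\tau_0\bigr)=\psi^s_{Q_0,\tau_0}$; setting $f=\sum_{s\in S}\lambda(s)g_s$ and reading the Mackey identification backwards gives $\widehat f\bigl(\Ind_P^G\sigma\bigr)=\varphi_{P,\sigma}$ for all $(P,\sigma)\in\mathcal{B}(G)$, and the formule d'inversion of~\ref{ssub:une_formule_pour_f_} results from substituting the connected inversion formula (\cite{Heiermann}, prop.~0.2) for each $g_s$ and re-expressing $\psi^s$ through $\varphi$. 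The principal obstacle is the middle part: pinning down the $G$-to-$G^0$ dictionary precisely enough --- the Mackey decomposition with its multiplicities, the Clifford-theoretic description of the fibre of $\mathcal{B}(G)\to\mathcal{B}(G^0)/\Gamma$, and the comparison of $W(M)=N_G(M)/M$ with $N_{G^0}(M^0)/M^0$ augmented by the symétries from $\Gamma$ --- so that every clause of condition~3 transfers faithfully in both directions.
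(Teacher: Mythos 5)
Your proposal is correct and follows essentially the same route as the paper: both proofs reduce to the connected Théorème \ref{thm:paley_wiener_connexe} by slicing along the $G^0$-cosets. Your $\widetilde\varphi\in\End(\Ind_{Q_0}^G\tau_0)$ is the paper's $\varphi(\Ind_{P^0}^G\sigma_0)$ built in \ref{sub:construction_de_varphi_p^0_g_sigma_0_} by decomposing $\Ind_{M^0}^M\sigma_0=\bigoplus_i\sigma_i$; your $\psi^s_{Q_0,\tau_0}$, the $(e,e)$-block of $\pi(s)^{-1}\widetilde\varphi$ in the Mackey decomposition, is exactly the paper's $(\widehat{\delta}_{s^{-1}}\varphi)_{|G^0}(\Ind_{P^0}^{G^0}\sigma_0)=e_1\circ(\widehat{\delta}_{s^{-1}}\varphi)(\Ind_{P^0}^G\sigma_0)\circ e_1^*$ from \ref{defn:restriction_a_H} and \ref{sub:restriction_a_g_0_}; and $f=\sum_s\lambda(s)g_s$ is the paper's $f=\sum_{s\in S}\delta_s*f_s$. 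The only real difference is packaging: the paper develops in Section \ref{sec:endomorphismes_du_foncteur_d_oubli} a general abstract formalism (the algebra $\efo(\cat)$, the notion of support, the projection $p_H=\text{Inf}_H^G\circ r_H^G$, the decomposition Lemme \ref{lemme:decomposition}) valid for any locally profinite $G$ and open normal subgroup of finite index, and only then specializes; you work directly in the Mackey blocks. The abstract formalism pays off in Section \ref{sub:une_formule_d_inversion}, where $\varphi_s$ and $r_H^G$ are reused to derive the explicit inversion formula.

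One clause in your argument is a distraction and would be better removed than tightened. You invoke the (generically valid) generation of $\End_{G^0}(\Ind_{Q_0}^{G^0}\tau_0)$ by normalised intertwining operators to argue that ``commuting with all of $\End_{G^0}$ reduces to these relations.'' But belonging to $\End_{G^0}$ is neither what Théorème \ref{thm:paley_wiener_connexe} requires nor what it supplies: its hypotheses 3(a), 3(b) bear on the operators $\lambda(g)$ and $J_{Q_0|Q_0'}(\tau_0)$, which are maps between \emph{different} induced spaces $\Ind_{Q_0}^{G^0}\tau_0\to\Ind_{g\cdot Q_0}^{G^0}g\cdot\tau_0$ and $\Ind_{Q_0'}^{G^0}\tau_0\to\Ind_{Q_0}^{G^0}\tau_0$, not endomorphisms, so a statement about $\End_{G^0}$ captures neither. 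The correct and sufficient verification is the one you give in the preceding sentence, and it is the paper's (\ref{sub:restriction_a_g_0_}, using the commutative diagrams of \ref{sub:operateurs_d_entrelacements}): $\lambda(g_0)$ for $g_0\in G^0$ and $J_{Q_0|Q_0'}(\tau_0)$ are the restrictions to the Mackey block of the operators entering conditions 3(a), 3(b) on $G$, and the block extraction $e_1(\cdot)e_1^*$ transports the commutation relation for $\varphi$ into the one for $\psi^s$.

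Finally, a small point on the statement itself: it reads $\varphi_{P,\sigma}\in\End_G(\Ind_P^G\sigma)$, but comparison with Théorème \ref{thm:paley_wiener_connexe} and the fact that $\widehat{f}(\pi)$ is not $G$-équivariant in general shows this should be $\End_{\C}$. Your proposal implicitly reads it this way (correctly), but the occasional appeal to ``$\widetilde\varphi\in\End_G$'' and the $\End_{G^0}$ detour suggest the typo may have contributed to the confusion above.
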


Notre méthode consiste à déduire ce résultat du cas connexe. Décrivons succinctement le plan. Dans la partie \ref{sec:endomorphismes_du_foncteur_d_oubli}, on établit une méthode générique permettant de relier les propriétés de la transformée de Fourier d'un groupe localement profini $G$ à celles d'un de ses sous groupes d'indice fini. Dans les parties \ref{sec:induction_parabolique_dans_le_groupe_g} et \ref{sec:representations_d_une_groupe_reductif_non_connexe}, on revient dans le cadre d'un groupe réductif sur $F$, et on rappelle les définitions et les résultats principaux de \cite{Goldberg} concernant l'induction parabolique dans ce cadre tordu. Dans la partie \ref{sec:le_theoreme}, on assemble ces éléments pour donner la preuve du théorème \ref{thm:paley_wiener}.


\section{Endomorphismes du foncteur d'oubli} 
\label{sec:endomorphismes_du_foncteur_d_oubli}

Pour cette partie on se place dans un cadre un peu plus général que dans le reste du document. Soit $G$ un groupe topologique localement profini. Soit $H \subset G$ un sous-groupe distingué de $G$ d'indice fini ouvert et fermé\,\footnote{Ces hypothèses sont en fait redondantes puisque le fait d'être ouvert entraine d'être fermé, et le fait d'être fermé d'indice fini entraîne d'être ouvert.}. Les résultats de cette partie seront utilisés pour $G$ un groupe réductif $p$-adique, et $H$ sa composante neutre.

Comme dans les autres parties, on note $\Rep(G)$ la catégorie des représentations complexes lisses de $G$. Si $\pi \in \Rep(G)$ on note $\text{JH}(\pi)$ l'ensemble des classes d'isomorphisme de sous-quotients irréductibles de $\pi$. On se donne $\cat$ une sous-catégorie pleine de $\Rep(G)$ (pour l'instant arbitraire, mais l'on ajoutera vite des conditions supplémentaires). A terme, nous aurons en tête pour $\cat$ la catégorie engendrée (au sens précisé à la définition \ref{defn:engendrer}) par les représentations du type $\Ind_P^G(\sigma)$ pour $P = MN$ parcourant l'ensemble des sous-groupes paraboliques de $G$, et $\sigma \in \Rep(M)$ celui des représentations cuspidales irréductibles de $M$.

On dispose d'un foncteur d'oubli
\begin{align*}
	r_{1,\cat}^G : \quad \cat \quad &\longrightarrow \C- \textrm{Vect}\\
	(\pi, V) &\longmapsto V
\end{align*}
Et on note $\efo(\cat) = \End(r^G_{1,\cat})$ l'algèbre des endomorphismes de ce foncteur. Un endomorphisme $\varphi \in \efo(\cat)$ est la donnée pour toute représentation $(\pi, V) \in \cat$ d'un endomorphisme $\varphi(\pi) \in \End_{\C}(\pi)$ de l'espace vectoriel sous-jacent $V$, tel que pour tout $\pi_1, \pi_2 \in \cat$ et tout $G$-morphisme $\alpha \in \Hom_G(\pi_1, \pi_2)$, on ait $\alpha \circ \varphi(\pi_1) = \varphi(\pi_2) \circ \alpha$. On appellera cette condition la commutation aux $G$-entrelacements.

L'ensemble $\efo(G)$ forme une $\C$-algèbre unitaire, où les opérations se font ``terme à terme'' (le produit étant la composition des endomorphismes). En particulier l'élément neutre est $(\pi, V) \mapsto \Id_V$ 
et pour $g \in G$, on pose

\[
	\widehat{\delta_g} : (\pi,V) \in \cat \mapsto \pi(g)
\]
Une seconde famille d'éléments de $\efo(G)$ est donnée par la transformée de Fourier des fonctions lisses à support compact, comme on l'explique dans la section suivante.

\subsection{Transformée de Fourier} 
\label{sub:transformee_de_fourier}

On choisit dans toute la suite une mesure de Haar sur $G$. Si $F \in \Hecke(G)$, on définit un élément $\widehat{F}_{\cat} \in \efo(\cat)$, appelée transformée de Fourier de $F$, pour $\pi \in \cat$ par

\[
	\widehat{F}_{\cat}(\pi) = \int_G \pi(g) F(g) \, \D g
\]
La transformée de Fourier $F \mapsto \widehat{F}_{\cat}$ est un morphisme d'algèbres de $\Hecke(G)$ dans $\efo(\cat)$. Pour alléger la notation, on fera disparaître l'indice $\cat$, et on notera simplement $\widehat{F} \in \efo(\cat)$. De manière plus générale, on peut donner un sens à la transformée de Fourier d'une famille plus large de distributions, quitte éventuellement à changer la catégorie $\cat$ des représentations considérées (pour pouvoir donner un sens à l'intégrale ci-dessus). On peut notamment donner le sens que nous avons donné plus haut à la transformée de Fourier des distributions de Dirac $\delta_g$ en $g \in G$. Si $\cat$ est la catégorie de représentations tempérées, alors on peut donner un sens à cette intégrale pour toute fonction $F$ de Schwartz-Harish-Chandra : si $\xi_{v, \tilde{v}} : g \mapsto \scal{\pi(g)v, \tilde{v}}$ est un coefficient matriciel de $(\pi, V)$, alors l'intégrale sur $G$ de $F(g) \xi_{v, \tilde{v}}(g)$ converge et on définit $\pi(F)v \in V$ comme l'unique vecteur qui vérifie $\int_G F(g) \xi_{v, \tilde{v}}(d) \, \D g = \scal{\pi(F)v, \tilde{v}}$ pour tout vecteur $\tilde{v} \in \tilde{V}$ dans la représentation contragrédiente de $(\pi, V)$ (voir \cite{Waldspurger} III.7). On peut de manière générale imaginer $\efo(\cat)$ comme une algèbre dont les éléments sont des distributions formelles (et dans certains cas, comme par exemple $\cat = \Rep(G)$, on peut montrer que les éléments de $\efo(\cat)$ correspondent effectivement à des vraies distributions sur $G$, voire des fonctions cf \cite{HarishVanDijk}), plus $\cat$ est petite, plus il y en a.

\begin{lemme}
	On suppose que la catégorie $\cat$ contient toutes les représentations irréductibles. Alors la transformation de Fourier est injective.
\end{lemme}
\begin{proof}
	C'est une conséquence du lemme de séparation (voir par exemple \cite{Zelevinskii} proposition 2.12).
\end{proof}
La catégorie $\cat$ que nous utiliserons dans les applications (engendrée par les induite paraboliques de représentations cuspidales) vérifie cette propriété (puisque tout représentation irréductible est sous-représentation d'une telle induite). Notre but sera de décrire l'image de $\Hecke(G)$ dans $\efo(\cat)$, et de donner un procédé d'inversion.

\subsection{Lissité} 
\label{sub:lissite}

Si $K \subset G$ est un sous-groupe ouvert compact, on note $e_K \in \Hecke(G)$ la fonction à support dans $K$ constante sur $K$ et d'intégrale $1$. On dit que $\varphi \in \efo(\cat)$ est $K$-bi-invariant si $\varphi = \varphi \widehat{e_K} = \widehat{e_K} \varphi$ ($\varphi$ revient alors à la donnée pour tout $(\pi,V) \in \cat$ d'un endomorphisme de l'espace $V^K$ des vecteurs $K$-fixes). On note $\efo^K(\cat)$ la sous-algèbre des éléments $K$-bi-invariants. On dira que $\varphi \in \efo(\cat)$ est lisse s'il est bi-invariant par un sous-groupe ouvert compact, et on note $\efo^{\infty}(\cat) = \bigcup_{K} \efo^K(\cat)$ la sous-algèbre des éléments lisses. La transformée de Fourier envoie $e_K \Hecke(G) e_K$ dans $\efo^K(\cat)$ et $\Hecke(G)$ dans $\efo^{\infty}(\cat)$. Par contre, si $g \in G$ n'est pas dans le centre, alors $\widehat{\delta_g}$ n'est pas lisse (sauf si $G$ est discret). Signalons au passage un résultat de densité.

\begin{lemme}
	\label{lemme:densite}
	On a
	\[
		\efo(\cat) = \varprojlim_{K} \efo^K(\cat)
	\]
	et $\efo^{\infty}(\cat)$ est dense dans $\efo(\cat)$ pour la topologie de la limite projective. Par ailleurs, $\efo^{\infty}(\Rep(G)) \simeq \Hecke(G)$.
\end{lemme}
\begin{proof}
	Voir \cite{Deligne}.
\end{proof}


\subsection{Décomposition en classes modulo $H$} 
\label{sub:decomposition_en_classes_modulo_h_}

On se propose d'expliquer comment inverser la transformée de Fourier sur $G$ si on sait le faire sur $H$ (la formule est en quelque sorte une une combinaison d'inversions de Fourier sur $H$ et sur $G/H$). Fixons $S \subset G$ un système de représentants de $G/H$, alors on a la décomposition suivante (en sous-espaces vectoriels) de l'algèbre de Hecke de $G$

\[
	\Hecke(G) = \bigoplus_{s \in S} \delta_{s} * \Hecke(H)
\]
Où $\delta_s$ désigne la distribution de Dirac en $s$. Explicitement, pour $F \in \Hecke(G)$ et $s \in S$, posons $F_s = (\delta_{s^{-1}} * F) . \mathds{1}_{H}$ et $f_s = (F_s)_{|H}$ (c'est-à-dire que $F_s(h) = f_s(h) = F(sh)$ pour $h \in H$ et $F_s$ est nul hors de $H$), alors l'application $F \mapsto (f_s)_{s \in S}$ est un isomorphisme d'espaces vectoriels.
De la décomposition $F = \sum_{s \in S} \delta_{s} * F_s$, on tire via la transformée de Fourier
\[
	\widehat{F} = \sum_{s \in S} \widehat{\delta}_{s} . \widehat{F}_s
\]
où le produit est la composition terme à terme des endomorphismes. On va montrer qu'étant donné $\widehat{F}$ la transformée de Fourier, on peut construire $\widehat{F_s}$ puis $\widehat{f_s}$ (sans avoir recours explicitement à la fonction $F$). Ainsi, si on sait inverser la transformée de Fourier sur $H$, alors on peut ainsi recouvrer les $f_s$, donc $F$.


\subsection{Catégorie engendrée} 
\label{sub:categorie_engendree}

Dans toute la suite on suppose que $\cat$ est stable par somme directe et sous-quotient.

\begin{defn}
	\label{defn:engendrer}
	Si $\mathcal{D}$ est une sous-catégorie pleine de $\cat$, on dira que $\mathcal{D}$ engendre $\cat$ si $\cat$ est la plus petite sous-catégorie de $\Rep(G)$ contenant $\mathcal{D}$ et stable par somme directe arbitraire et sous-quotient. 
\end{defn}

\begin{ex}
	Considérons $\mathcal{D}$ la catégorie dont la seule représentation est $(\lambda, \Hecke(G))$ où $\lambda$ est la translation à gauche (dont les morphismes sont les endomorphismes de $\Hecke(G)$-module de $\Hecke(G)$). Alors $\mathcal{D}$ engendre $\Rep(G)$. En effet, si $(\pi, V) \in \Rep(G)$ est une représentation quelconque, alors
	\begin{align*}
		\bigoplus_{v \in V} \Hecke(G) &\longrightarrow V \\
		(h_v)_{v \in V} &\longmapsto \sum_{v \in V} h_v.v
	\end{align*}
	est un $G$-morphisme surjectif (car $\pi$ est lisse).
\end{ex}

\begin{lemme}
	\label{lemme:categorie_engendree}
	Si $\mathcal{D}$ engendre $\cat$, alors la restriction $\efo(\cat) \to \efo(\mathcal{D})$ est un morphisme injectif.
\end{lemme}
\begin{proof}
	Soit $\varphi \in \efo(\cat)$ tel que
	\[
		\forall \pi \in \mathcal{D}, \ \varphi(\pi) = 0
	\]
	Soit $\cat'$ la sous-catégorie pleine de $\cat$ des représentations $\pi \in \cat$ telles que $\varphi(\pi) = 0$. Notre but est de montrer que $\cat' = \cat$. On sait déjà que $\cat'$ contient $\mathcal{D}$, il reste donc à montrer que $\cat'$ est stable par sous-quotient et sommes directes.
	
	Si $i : \pi_1 \to \pi_2$ est une $G$-injection avec $\pi_1 \in \cat$ et $\pi_2 \in \cat'$, la commutation aux $G$-entrelacements entraine $i \circ \varphi(\pi_1) = 0 \circ i = 0$, donc $\varphi(\pi_1) = 0$ (puisque $i$ est injective) et $\pi_1 \in \cat$.
	
	De même, si $s : \pi_1 \to \pi_2$ est une $G$-surjection avec $\pi_1 \in \cat'$ et $\pi_2 \in \cat$, alors $\varphi(\pi_2) \circ s = s \circ 0 = 0$, donc $\varphi(\pi_2) = 0$ (puisque $s$ est surjective) et $\pi_2 \in \cat'$. Ce qui prouve que $\cat'$ est stable par sous-quotient.
	
	Enfin si $(\pi_i)_{i \in I}$ est une famille de représentations de $\cat'$, la commutation aux $G$-entrelacements entraine
	\[
		\varphi\left(\bigoplus_{i \in I} \pi_i \right) = \bigoplus_{i \in I} \varphi(\pi_i) = 0
	\]
	où on donne le sens évident
	à la somme directe de morphismes. On a donc $\bigoplus_{i \in I} \pi_i \in \cat'$.
	
	En conclusion la sous-catégorie $\cat'$ contient $\mathcal{D}$ et est stable par sous-quotient et somme directe, c'est donc $\cat$ toute entière. Ce qui veut dire que $\varphi$ est nul et prouve l'injectivité.
	
\end{proof}

\begin{rem}
	L'injectivité de $\efo(\cat) \to \efo(\mathcal{D})$ ne garantit pas que $\mathcal{D}$ engendre $\cat$ (au sens de la définition \ref{defn:engendrer}). Par exemple, si $\Irr(G)$ désigne la catégorie des représentations irréductibles de $G$, alors $\efo(\Rep(G)) \to \efo(\Irr(G))$ est injective (d'après le lemme de séparation et le lemme \ref{lemme:densite}) mais la catégorie engendrée par $\Irr(G)$ (au sens de la définition \ref{defn:engendrer}) est la catégorie des représentations semi-simples, qui est en général strictement plus petite que $\Rep(G)$ (quand $G$ n'est pas compact).
\end{rem}

\subsection{Support} 
\label{sub:support}
Soit $H \subset G$ un sous-groupe distingué de $G$ d'indice fini et fermé. On suppose désormais que si $\pi \in \cat$ et $\rho \in \Rep(G)$ sont telles que $\pi_{|H} \simeq \rho_{|H}$, alors $\rho \in \cat$ (si $G/H$ est commutatif, cela revient à demander que $\cat$ soit stable par torsion par les caractères de $G/H$).
\begin{defn}[Support]
	\label{defn:support}
	On dit que $\varphi \in \efo(\cat)$ est à support dans $H$ si $\varphi$ commute aux $H$-entrelacements, c'est-à-dire si pour tout $\pi_1, \pi_2 \in \cat$ et tout $\alpha \in \Hom_H(\pi_1, \pi_2)$, on a
	\[
		\alpha \circ \varphi(\pi_1) = \varphi(\pi_2) \circ \alpha
	\]
	On note $\efo(\cat)_{|H}$ la sous-algèbre des éléments à support dans $H$. De même, si $g \in G$, on dit que $\varphi$ est à support dans $gH$ si $\widehat{\delta}_{g^{-1}} \varphi$ est à support dans $H$.
\end{defn}

Donnons une caractérisation équivalente de cette condition. On suppose que notre catégorie $\cat$ est telle que si $\pi \in \cat$ et $\rho \in \Rep(G/H)$, alors $\rho \otimes \pi \in \cat$ (ici, on identifie les représentations de $G/H$ et les représentations de $G$ triviales sur $H$). Remarquons que puisque $\cat$ est supposé stable par sommes directes arbitraires et que $\Rep(G/H)$ est semi-simple, il revient au même d'imposer cette condition uniquement pour $\rho \in \Irr(G/H)$

\begin{lemme}
	\label{lemme:caracterisation_tensorielle_support}
	Soit $\varphi \in \efo(\cat)$ et $g \in G$. On a équivalence des propriétés suivantes
	\begin{enumerate}
		\item $\varphi$ est à support dans $gH$
		\item Pour toutes $\pi \in \cat$ et $\rho \in \Rep(G/H)$, alors $\varphi(\rho \otimes \pi) = \rho(g) \otimes \varphi(\pi)$
		\item Pour toutes $\pi \in \cat$ et $\rho \in \Irr(G/H)$, alors $\varphi(\rho \otimes \pi) = \rho(g) \otimes \varphi(\pi)$
 	\end{enumerate}
\end{lemme}
\begin{proof}
	Quitte à remplacer $\varphi$ par $\widehat{\delta}_{g^{-1}} \varphi$, on peut supposer $g = 1$ dans toute la preuve.	L'équivalence de 2 et 3 est évidente (la réciproque découle de la semi-simplicité de $\Rep(G/H)$). Supposons vraie la propriété 1, et montrons 2. Soit $(\rho, W) \in \Rep(G/H)$, et $(\pi,V) \in \cat$. On note $(1_W,W) \in \Rep(G/H)$ la représentation triviale sur $W$. Alors $\Id_{V \otimes W}$ est un $H$-isomorphisme entre $\rho \otimes \pi$ et $1_W \otimes \pi$ (cette dernière représentation est une somme de copies de $\pi$). Donc
	\[
		\varphi(\rho \otimes \pi) = \varphi(1_W \otimes \pi) = \Id_W \otimes \varphi(\pi) = \rho(1) \otimes \varphi(\pi)
	\]
	Réciproquement, supposons 2 et montrons 1. Soient $(\pi_1, V_1), (\pi_2,V_2) \in \cat$. On note $\rho \in \Rep(G/H)$ la représentation dont l'espace vectoriel est $\Hom_H(\pi_1, \pi_2)$ et l'action est $\rho(g).f = \pi_2(g) \circ f \circ \pi_1(g)^{-1}$ pour $f \in \Hom_H(\pi_1, \pi_2)$ et $g \in G$ (c'est bien une représentation de $G/H$ puisque l'action de $H$ est triviale). Définissons un $G$-entrelacement $\alpha : \rho \otimes \pi_1 \longrightarrow \pi_2$ pour $f \in \Hom_H(\pi_1, \pi_2)$ et $v_1 \in V_1$ par
	\[
		\alpha(f \otimes v_1) = f(v_1)
	\]
	Comme $\varphi \in \efo(\cat)$, on a $\alpha \circ \varphi(\rho \otimes \pi_1) = \varphi(\pi_2) \circ \alpha$. Or par hypothèse, on a $\varphi(\rho \otimes \pi_1) = \rho(1) \otimes \varphi(\pi_1)$.	Donc pour tout $f \in Hom_H(\pi_1, \pi_2)$ et $v_1 \in V_1$, on a
	\[
		[\alpha \circ \varphi(\rho \otimes \pi_1)] (f \otimes v_1) = \alpha (f \otimes \varphi(\pi_1)(v_1)) = [f \circ \varphi(\pi_1)](v_1)		\et		[\varphi(\pi_2) \circ \alpha](f \otimes v_1) = [\varphi(\pi_2) \circ f](v_1)
	\]
	D'où on tire
	\[
		f \circ \varphi(\pi_1) = \varphi(\pi_2) \circ f
	\]
\end{proof}

Remarquons que si $G/H$ est commutatif, les représentations irréductibles de $G/H$ sont les caractères et le lemme précédent prend une forme particulièrement simple. On note $X_H = \Hom(G/H, \C^*)$ le groupe de ses caractères que l'on identifie à celui des caractères de $G$ triviaux sur $H$. Le lemme suivant éclaire la définition du support que nous avons donné.

\begin{lemme}
	\label{lemme:support_fonction_transformee}
	Soit $f \in \Hecke(G)$. Alors $\widehat{f}$ est à support dans $H$ si et seulement si $\widehat{f} = \widehat{f . \mathds{1}_{H}}$. En particulier, si $\cat$ est telle que $f \mapsto \widehat{f}$ est injective, alors $\widehat{f}$ est à support dans $H$ si et seulement si $f$ est à support dans $H$.
\end{lemme}
\begin{proof}
	Supposons $\widehat{f} = \widehat{f . \mathds{1}_{H}}$. Si $\pi_1, \pi_2 \in \cat$ et $\alpha \in \Hom_H(\pi_1, \pi_2)$ alors
	\begin{align*}
		\alpha \circ \widehat{f}(\pi_1) &= \alpha \circ \widehat{f \mathds{1}_H}(\pi_1) \\
		&= \alpha \circ \int_H f(h) \, \pi_1(h) \, \D h \\
		&= \int_H f(h) \, \pi_2(h) \circ \alpha \, \D h\\
		&= \widehat{f \mathds{1}_H}(\pi_2) \circ \alpha \\
		&= \widehat{f}(\pi_2) \circ \alpha
	\end{align*}
	Réciproquement, supposons que $\widehat{f}$ est à support dans $H$. D'après le lemme \ref{lemme:caracterisation_tensorielle_support}, pour tout $\rho \in \Irr(G/H)$ et $\pi \in \cat$, on a
	\[
		\widehat{f}(\rho \otimes \pi) = \rho(1) \otimes \widehat{f}(\pi)
	\]
	Et par ailleurs, on calcule
	\[
		\widehat{f}(\rho \otimes \pi) = \sum_{g \in G/H} \int_{H} \rho(g) \otimes f(gh) \, \D h  = \sum_{g \in G/H} \rho(g) \otimes \widehat{f . \mathds{1}_{gH}}(\pi)
	\]
	En notant $\chi_{\rho}$ le caractère de la représentation $\rho$, on tire pour tout $\rho \in \Irr(G/H)$
	\[
		\sum_{g \in G/H} \chi_{\rho}(g) \, \widehat{f . \mathds{1}_{gH}}(\pi) = \chi_{\rho}(1) \, \widehat{f}(\pi)
	\]
	Par linéarité, la relation est encore vraie pour toute combinaison linéaire des $\chi_{\rho}$ (c'est-à-dire pour toute fonction sur $G/H$ invariante par conjugaison). En particulier, pour $\mathds{1}_H$, la fonction caractéristique de $H$, on tire $\widehat{f} = \widehat{f . \mathds{1}_{H}}$.
\end{proof}

\subsection{Le foncteur $\Ind_H^G \circ r_H^G$, avec G/H fini commutatif} 
\label{sub:resultats_sur_la_restriction_a_un_sous_groupe_d_indice_fini}

Dans cette section, on suppose $G/H$ commutatif (le cas qui nous intéresse réellement), et on spécialise certains résultats à ce cas. On sait (voir par exemple \cite{HenniartCocompact} section 2) que si $\Pi \in \Rep(G)$ est irréductible, alors $\Pi_{|H}$ est semi-simple de longueur finie, et réciproquement, toute représentation irréductible de $H$ apparaît dans la décomposition d'une telle représentation. Par réciprocité de Frobenius, si $\pi \in \Rep(H)$ est irréductible, alors $\Ind_H^G(\pi)$ est semi-simple de longueur finie, et toute représentation irréductible de $G$ apparaît dans la décomposition d'une telle représentation. Le lemme suivant fournit une description des $H$-entrelacements entre des représentations de $G$ dans le cas où $G/H$ est commutatif

\begin{lemme}
	\label{lemme:entrelacement_sur_H}
	Supposons $G/H$ commutatif. Soient $\Pi_1, \Pi_2 \in \Rep(G)$ des représentations lisses de $G$. Alors on a
	\[
		\Hom_H(\Pi_1, \Pi_2) = \bigoplus_{\chi \in X_H} \Hom_G(\chi \Pi_1, \Pi_2)
	\]
	En particulier, quand $\Pi_1$ et $\Pi_2$ sont irréductibles, on obtient l'équivalence des trois propriétés suivantes
	\begin{enumerate}
		\item $(\Pi_1)_{|H}$ et $(\Pi_2)_{|H}$ ont une composante irréductible commune
		\item $(\Pi_1)_{|H} \simeq (\Pi_2)_{|H}$
		\item il existe $\chi \in X_H$ tel quel $\Pi_2 \simeq \chi \Pi_1$
	\end{enumerate}
\end{lemme}
\begin{proof}
	Notons $V = \Hom_H(\Pi_1, \Pi_2)$. On définit une action $\rho$ linéaire de $G$ sur $V$, donnée pour $v \in V$ et $g \in G$ par
	\[
		\rho(g).v = \Pi_2(g) \circ v \circ \Pi_1(g^{-1})
	\]
	Alors on vérifie que $(\rho, V)$ est une représentation de $G$ qui est triviale sur $H$, donc une représentation de $G/H$.  Or comme $G/H$ est fini commutatif, alors $(\rho, V)$ est la somme directe de ses sous-espaces $\chi$-isotypiques pour $\chi \in X_H$. Or pour $\chi \in X_H$, le sous-espace $\chi$-isotypique est justement
	\[
		V_{\chi} = \braces{v \in V, \ \forall g \in G, \  \rho(g).v = \chi(g) v} = \Hom_G(\chi \Pi_1, \Pi_2)
	\]
	Ce qui donne finalement la décomposition
	\[
		\Hom_H(\Pi_1, \Pi_2) = \bigoplus_{\chi \in X_H} \Hom_G(\chi \Pi_1, \Pi_2)
	\]
	Si maintenant on suppose $\Pi_1$ et $\Pi_2$ irréductibles, alors $(\Pi_1)_{|H}$ et $(\Pi_2)_{|H}$ sont semi-simples (car $G/H$ est fini). Si $(\Pi_1)_{|H}$ et $(\Pi_2)_{|H}$ ont une composante irréductible commune, alors, par semi-simplicité, on a $\Hom_H(\Pi_1, \Pi_2) \neq (0)$. D'après la décomposition que l'on vient de montrer, il existe un caractère $\chi \in X_H$, tel que $\Hom_G(\chi \Pi_1, \Pi_2) \neq (0)$. On se donne un tel caractère $\chi$. Comme les représentations $\chi \Pi_1$ et $\Pi_2$ sont irréductibles, un élément non nul de $\Hom_G(\chi \Pi_1, \Pi_2)$ est nécessairement un isomorphisme. D'où $\Pi_2 \simeq \chi \Pi_1$. Réciproquement, il est immédiat que 3 entraine 2 qui entraine 1.
\end{proof}
\begin{rem}
	A titre indicatif, signalons que la projection sur le sous-espace $\chi$-isotypique de $(V, \rho)$ est explicitement donnée par
	\[
		p_{\chi} = \frac{1}{|G/H|}\sum_{g \in G/H} \chi(g)^{-1} \rho(g)
	\]
\end{rem}
En conséquence du lemme \ref{lemme:entrelacement_sur_H}, on peut tirer une décomposition du foncteur $\Ind_H^G \circ r_H^G$.

\begin{lemme}
	\label{lemme:induction_restriction}
	Supposons $G/H$ commutatif. Soit $\Pi \in \Rep(G)$ une représentation lisse de $G$. Alors on a un $G$-isomorphisme naturel en $\Pi$
	
	\[
		\Ind_H^G (\Pi_{|H}) \simeq \bigoplus_{\chi \in X_H} \chi \Pi
 	\]
\end{lemme}
\begin{proof}
	Pour tout $\rho \in \Rep(G)$, par réciprocité de Frobenius, on a un isomorphisme (d'espaces vectoriels) naturel en $\rho$ et $\Pi$, 
	\[
		 \Hom_G(\rho, \Ind_H^G (\Pi_{|H})) \simeq \Hom_H(\rho,\Pi)
	\]
	Et par le lemme \ref{lemme:entrelacement_sur_H}, on a
	\[
		\Hom_H(\rho,\Pi_{|H}) = \bigoplus_{\chi \in X_H} \Hom_G(\rho, \chi \Pi) \simeq \Hom_G\pars{\rho, \bigoplus_{\chi \in X_H} \chi \Pi}
	\]
	D'où l'isomorphisme annoncé grâce au lemme de Yoneda.
\end{proof}
\begin{rem}
	De manière explicite, $\chi \Pi$ s'identifie au sous-espace de $\Ind_H^G (\Pi_{|H})$ des fonctions $w$ vérifiant pour tout $x \in G$
	\[
		w(x) = \chi(x)\Pi(x).w(1)
	\]
	Et la $G$-projection sur cet espace est donnée par
	
	\begin{align*}
		p_{\chi} : \Ind_H^G (\Pi_{|H}) &\longrightarrow \Ind_H^G (\Pi_{|H}) \\
		w &\longmapsto \pars{x \mapsto \frac{1}{|G/H|} \sum_{g \in G/H} \chi(xg^{-1}) \Pi(x).w(g)}
	\end{align*}
\end{rem}

\begin{lemme}
	\label{lemme:restriction_a_H}
	Soit $\pi \in \Rep(H)$ une représentation lisse irréductible de $H$. Alors le groupe $X_H$ des caractères de $G/H$ agit par torsion sur $\text{JH}(\Ind_H^G(\pi))$ de manière transitive. Si on se donne $\Pi \in \text{JH}(\Ind_H^G(\pi))$ et que l'on note $X_H(\pi)$ le stabilisateur $\Pi$ sous cette action et $m(\pi) = \dim \Hom_G(\Pi, \Ind_H^G \pi)$, alors $X_H(\pi)$ et $m(\pi)$ ne dépendent pas du choix de $\Pi$, et on peut écrire
	\[
		\Ind_H^G(\pi) = m(\pi) \, \bigoplus_{\chi \in X_H/X_H(\pi)} \chi \Pi
	\]
\end{lemme}
\begin{proof}
	Si $\chi \in X_H$ est un caractère de $G/H$, la multiplication par $\chi$ fournit un $G$-isomorphisme $[\chi] : \chi \Ind_H^G(\pi) \to \Ind_H^G(\pi)$ donc l'application $f \mapsto f \circ [\chi]$ fournit un isomorphisme d'espaces vectoriels
	\[
		\Hom_{G}(\Pi, \Ind_H^G(\pi)) \simeq \Hom_{G}(\chi\Pi, \Ind_H^G(\pi))
	\]
En particulier, on tire que $X_H$ agit par torsion sur $\text{JH}(\Ind_H^G(\pi))$, et que pour tout $\Pi \in \Rep(G)$, on a $\dim \Hom_G(\Pi, \Ind_H^G \pi) = \dim \Hom_G(\chi \Pi, \Ind_H^G \pi)$.
	
	On se donne $\Pi \in \text{JH}(\Ind_H^G \pi)$ et une $G$-surjection $\Ind_H^G \pi \to \Pi$. Par réciprocité de Frobenius, on tire une $H$-injection $\pi \to \Pi_{|H}$, puis en induisant on obtient une $G$-injection (l'induction est exacte)
	\[
		\Ind_H^G(\pi) \longrightarrow \Ind_H^G(\Pi_{|H}) = \bigoplus_{\chi \in X_H} \chi \Pi
	\]
	Ce qui montre la transitivité de l'action. Comme l'action de $X_H$ est transitive, tous les stabilisateurs des points sont conjugués, et comme le groupe est commutatif, ils sont tous égaux. En notant $X_H(\pi)$ cet unique stabilisateur on peut finalement écrire
	 \[
		\Ind_H^G(\pi) = m(\pi) \, \bigoplus_{\chi \in X_H/X_H(\pi)} \chi \Pi
	\]
\end{proof}
\begin{rem}
	Dans le cas où     $G/H$ est cyclique, alors la multiplicité $m(\pi)$ vaut en fait $1$.
\end{rem}

Pour finir donnons un critère de commutation aux $H$-entrelacements conséquence du lemme \ref{lemme:entrelacement_sur_H}.
\begin{lemme}
	\label{lemme:critere_support}
	Supposons $G/H$ commutatif. Soit $\varphi \in \efo(\cat)$, alors $\varphi \in \efo(\cat)_{|H}$ si et seulement si pour tout $\pi \in \cat$ et tout $\chi \in X_H$, $\varphi(\chi\pi) = \varphi(\pi)$.
\end{lemme}
\begin{proof}
	Supposons $\varphi \in \efo(\cat)_{|H}$. Soit $\pi \in \cat$ et $\chi \in X_H$, alors l'identité réalise un $H$-entrelacement entre $\pi$ et $\chi \pi$, d'où $\varphi(\pi) = \varphi(\chi \pi)$. Réciproquement, supposons que pour tout $\pi \in \cat$ et tout $\chi \in X_H$, $\varphi(\chi\pi) = \varphi(\pi)$. Soient $\pi_1, \pi_2 \in \cat$, et $\alpha \in \Hom_H(\pi_1, \pi_2)$. D'après le lemme \ref{lemme:entrelacement_sur_H}, on peut écrire $\alpha = \sum_{\chi \in X_H} \alpha_{\chi}$ avec $\alpha_{\chi} \in \Hom_G(\chi \pi_1, \pi_2)$. Pour $\chi \in X_H$, on a
	\[
		\alpha_{\chi} \circ \varphi(\pi_1) = \alpha_{\chi} \circ \varphi(\chi \pi_1) = \varphi(\pi_2) \circ \alpha_{\chi}
	\]
	En sommant, on tire $\alpha \circ \varphi(\pi_1) = \varphi(\pi_2) \circ \alpha$.
\end{proof}

\subsection{Restriction du support} 
\label{sub:restriction_du_support}

Dans cette section on décrit un procédé pour construire $\widehat{F_{|H}}$ à partir de $\widehat{F}$, et on montre que la construction a bien les propriétés attendues. Commençons par des notations. Pour $\pi \in \Rep(H)$, on pose $\Pi = \Ind_H^G \pi$, et on définit pour $g \in G$
\begin{align*}
	e_{g,\pi} : \Pi &\longrightarrow \pi^g \\
	w & \longmapsto w(g)
\end{align*}
C'est un $H$-entrelacement surjectif qui admet une section
\begin{align*}
	e_{g,\pi}^* : \pi^g &\longrightarrow \Pi \\
	v & \longmapsto x \mapsto \begin{cases}
		\pi(x g^{-1}).v & \textrm{ Si } x \in gH \\
		0 & \textrm{ Sinon}
	\end{cases}
\end{align*}
En l'absence d'ambiguité, on omettra le $\pi$ en indice. On a $e_{g} e_{g}^* = \Id_{\pi^g}$ et $p_{g} = e_{g}^* e_{g}$ est un projecteur, c'est la projection sur les fonctions à support dans $gH$ (en particulier $p_g$ ne dépend que de $gH$). On vérifie également que
\begin{align}
	e_{g_1 g_2, \pi} &= e_{g_2, \pi^{g_1}} \, \lambda(g_1^{-1}) \\
	e_{g_1 g_2, \pi}^* &= \lambda(g_1) \, e^*_{g_2, \pi^{g_1}} \\
	p_{g_1 g_2, \pi} &= \lambda(g_1) \, p_{g_2, \pi^{g_1}} \, \lambda(g_1^{-1})
\end{align}
On a également, en termes de translation à droite, 
\begin{align}
	e_{g_1 g_2, \pi} &= e_{g_1} \, \Pi(g_2) \\
	e_{g_1 g_2, \pi}^* &= \Pi(g_2)^{-1} \, e_{g_1}^* \\
	p_{g_1 g_2, \pi} &= \Pi(g_2)^{-1} p_{g_1} \Pi(g_2) \label{eq:proj_right}
\end{align}
Notons que contrairement aux relations précédentes, les termes qui apparaissent ici ne sont pas des $H$-entrelacements, mais simplement des applications linéaires. On vérifie aussi que
\[
	e_s e_t^* = \begin{cases}
		\pi(st^{-1}) & \text{ Si } s \equiv t \mod H\\
		0 & \text{ Sinon}
	\end{cases} 
\]
Et enfin que
\[
	p_s p_t = \begin{cases}
			p_s & \text{ Si } s \equiv t \mod H\\
			0 & \text{ Sinon}
		\end{cases} \quad \quad \text{ et } \quad \quad \Id_{\Pi} = \sum_{g \in G/H} p_g
\]
On résume ces dernières égalités sous la forme synthétique suivante (que l'on peut voir comme un dual du lemme \ref{lemme:induction_restriction})
\begin{equation}\label{eq:decomp_induite}
	(\Ind_H^G \pi)_{|H} \simeq \bigoplus_{g \in G/H} \pi^g
\end{equation}

\begin{lemme}
	Soient $\rho_1, \rho_2 \in \Rep(H)$ des représentations lisses de $H$, alors on a un isomorphisme naturel en $\pi_1$ et $\pi_2$
	\[
		\Hom_G(\Ind_H^G\pi_1, \Ind_H^G \pi_2) \simeq \bigoplus_{g \in G/H} \Hom_H(\pi_1^g, \pi_2)
	\]
\end{lemme}
\begin{proof}
	C'est une conséquence de l'isomorphisme (\ref{eq:decomp_induite}) plus haut et de la réciprocité de Frobenius.
\end{proof}
\begin{lemme}
	\label{lemme:decomposition_restriction}
	Soit $\Pi \in \Rep(G)$ une représentation lisse irréductible de $G$. Alors $G/H$ agit par conjugaison sur $\text{JH}(\Pi_{|H})$ de manière transitive. Si on fixe $\pi \in \text{JH}(\Pi_{|H})$, que l'on note $G(\pi)$ son stabilisateur sous cette action, et $M(\pi) = \dim Hom_G(\pi, \Pi_{|H})$ (ce qui ne dépend pas du choix de $\pi$), alors on peut écrire
	
	\[
		\Pi_{|H} = M(\pi) \, \bigoplus_{g \in G/G(\pi)} \pi^g
	\]
	
	Si par surcroît on suppose $G/H$ commutatif, alors d'après le lemme \ref{lemme:restriction_a_H} la représentation $\Ind_H^G(\pi)$ se décompose sous la forme
	\[
		\Ind_H^G \pi = m(\pi) \, \bigoplus_{\chi \in X_H / X_H(\pi)} \chi \Pi
	\]
	et l'on a les relations
	\[
		m(\pi) = M(\pi) \quad \text{ et }\quad m(\pi)^2 |X_H / X_H(\pi)| = |G(\pi)/H|
	\]
\end{lemme}
\begin{proof}
	Pour tout $\pi \in \Rep(H)$, on a
	\[
		\Hom_{H}(\pi, \Pi_{|H}) = \Hom_{H}(\pi^g, \Pi_{|H})
	\]
	et $\pi^h \simeq \pi$ si $h \in H$. Donc $G/H$ agit sur conjugaison sur $\text{JH}(\Pi_{|H})$. Soit $\pi \in \text{JH}(\Pi_{|H})$. On se donne une $H$-injection $\pi \to \Pi_{|H}$, puis par réciprocité de Frobenius, on tire une $G$-surjection $\Ind_H^G \pi \to \Pi$ et par restriction on trouve une $G$-surjection (la restriction est exacte)
	\[
		(\Ind_H^G \pi)_{|H} = \bigoplus_{g \in G/H} \pi^g \longrightarrow \Pi_{|H}
	\]
	Ce qui prouve la transitivité. On peut donc écrire
	\[
		\Pi_{|H} = M(\pi) \, \bigoplus_{g \in G/G(\pi)} \pi^g
	\]
	Puisque $\pi \in \text{JH}(\Pi_{|H})$, alors $\Pi \in \text{JH}(\Ind_H^G \pi)$ (par réciprocité de Frobenius et semi-simplicité). Si $G/H$ est commutatif, alors on peut appliquer le lemme  \ref{lemme:restriction_a_H}, ce qui nous donne
	\[
		\Ind_H^G \pi = m(\pi) \, \bigoplus_{\chi \in X_H / X_H(\pi)} \chi \Pi
	\]
	La relation $m(\pi) = M(\pi)$ est conséquence de la réciprocité de Frobenius, pour la seconde on calcule
	\[
		\dim \End_G(\Ind_H^G \pi) = \dim \End_G\pars{m(\pi) \, \bigoplus_{\chi \in X_H / X_H(\pi)} \chi \Pi} = m(\pi)^2 |X_H/X_H(\pi)|
	\]
	Mais par réciprocité de Frobenius, on peut aussi écrire
	\begin{align*}
		\dim \End_G(\Ind_H^G \pi) &= \dim \Hom_H(\pi, (\Ind_H^G \pi)_{|H})\\
		& = \sum_{g \in G/H} \dim \Hom_H(\pi, \pi^g)\\
		&= |G(\pi)/H|
	\end{align*}
\end{proof}

On note $\cat_H$ la sous-catégorie pleine de $\Rep(H)$ engendrée par les $r_H^G(\pi) = \pi_{|H}$ pour $\pi \in \cat$. On remarque que la condition imposée sur $\cat$ en \ref{sub:support} assure que si $\rho \in \cat_H$, alors $\Ind_H^G\rho \in \cat$ et ces représentations engendrent $\cat$ (puisque $\Ind_H^G(\Pi_{|H})$ contient toujours $\Pi$).

\begin{defn}[Restriction à $H$]
	\label{defn:restriction_a_H}
	Si $\varphi \in \efo(\cat)$, on définit $r_H^G(\varphi)$ pour $\pi \in \cat_H$

	\begin{equation}
		r_H^G(\varphi)(\pi) = e_{1,\pi} \, \varphi(\Ind_H^G \pi) \, e^*_{1,\pi}
	\end{equation}
	Pour alléger les notations, on notera souvent $\varphi_{|H}$ à la place de $r_H^G(\varphi)$.
\end{defn}

Vérifions pour commencer que cela définit un élément de $\efo(\cat_H)$.
\begin{lemme}[$\varphi_{|H}$ commute aux entrelacements]
	\label{lemme:restriction_commute}
	Si $\varphi \in \efo(\cat)$, alors $\varphi_{|H} \in \efo(\cat_H)$. Et si $\varphi$ est lisse, alors $\varphi_{|H}$ aussi.
\end{lemme}
\begin{proof}
	Soient $\rho_1, \rho_2 \in \cat_H$ et $\alpha \in \Hom_H(\rho_1, \rho_2)$. Montrons que $\alpha \varphi_{|H}(\pi_1) = \varphi_{|H}(\pi_2) \alpha$. On pose $\pi_1 = \Ind_H^G \rho_1$, $\pi_2 = \Ind_H^G \rho_2$ et $A = \Ind_H^G \alpha$. On vérifie que
	\[
		\alpha e_{1, \rho_1} = e_{1, \rho_2} A \quad \quad \text{ et } \quad \quad A e_{1, \rho_1}^* = e_{1, \rho_2}^* \alpha
	\]
	Et on a
	\begin{align*}
		\alpha \varphi_{|H}(\rho_1) &= \alpha e_{1, \rho_1} \varphi(\pi_1) e^*_{1, \rho_1} \\
		&= e_{1, \rho_2} A \varphi(\pi_1) e^*_{1, \rho_1} \\
		&= e_{1, \rho_2} \varphi(\pi_2) A e^*_{1, \rho_1} \\
		&= e_{1, \rho_2} \varphi(\pi_2) e^*_{1, \rho_2} \alpha \\
		&= \varphi_{|H}(\rho_2) \alpha
	\end{align*}
	Donc $\varphi_{|H} \in \efo(\cat_H)$. Par ailleurs si $\varphi$ est $K$-bi-invariant, alors $\varphi_{|H}$ est $K \cap H$-bi-invariant (ce qui est un sous-groupe ouvert compact de $H$). En effet, si $k_1, k_2 \in K \cap H$, $\rho \in \cat_H$ et $\pi = \Ind_H^G \rho$, on a
	\[
		\rho(k_1) \varphi_{|H}(\rho) \rho(k_2) = \rho(k_1) e \varphi(\pi) e^* \rho(k_2) = e \pi(k_1) \varphi(\pi) \pi(k_2) e^* = e \varphi(\pi) e^* = \varphi_{|H}(\pi) 
	\]
\end{proof}

Dans le lemme suivant, on vérifie que la construction est bien celle qu'on a annoncée et justifie donc le nom et la notation.
\begin{lemme}[la restriction commute au chapeau]
	\label{lemme:restriction_commute_chapeau}
	Soit $F \in \Hecke(G)$, alors $r_H^G(\widehat{F}) = \widehat{F_{|H}}$ (c'est-à-dire que la restriction commute au chapeau : $\widehat{F}_{|H} = \widehat{F_{|H}}$).
\end{lemme}
\begin{proof}
	Soit $\pi \in \cat_H$ et $\Pi = \Ind_H^G \pi$. On a
	\begin{align*}
		\widehat{F}_{|H}(\pi) &= e_1 \widehat{F}(\Pi) e_1^* \\
		&= \sum_{s \in S} e_1 \Pi(s) \pars{\int_{H} F(sh) \Pi(h) \, \D h}  e_1^* \\
		&= \sum_{s \in S} e_1 \Pi(s) e_1^* \pars{\int_{H} F(sh) \pi(h) \, \D h} \\
		&= \sum_{s \in S} e_1 e_{s^{-1}}^* \pars{\int_{H} F(sh) \pi(h) \, \D h} \\
		&= \widehat{F_{|H}}(\pi)
	\end{align*}
	où on rappelle que l'ensemble $S \subset G$ est un système de représentants de $G/H$.
\end{proof}

On vérifie que la restriction satisfait une propriété de multiplicativité analogue à celle pour les distributions.
\begin{lemme}[Multiplicativité de la restriction]
	Soient $\varphi_1, \varphi_2 \in \efo(\cat)$. Si $\varphi_1 \in \efo(\cat)_{|H}$ ou $\varphi_2 \in \efo(\cat)_{|H}$, alors
	\[
		r_H^G(\varphi_1 \varphi_2) = r_H^G(\varphi_1) \, r_H^G(\varphi_2)
	\]
	En particulier $r_H^G$ définit un morphisme d'algèbres de $\efo(\cat)_{|H}$ dans $\efo(\cat_H)$.
\end{lemme}
\begin{proof}
	Soit $\pi \in \cat_H$, on note $\Pi = \Ind_H^G \pi$. On a
	\begin{align*}
		r_H^G(\varphi_1 \varphi_2)(\pi) = e_1 \, \varphi_1(\Pi) \, \varphi_2(\Pi) \, e_1^*
	\end{align*}
	Si $\varphi_1 \in \efo(\cat)_{|H}$, alors $\varphi_1(\Pi) p_1 = p_1 \varphi_1(\Pi)$ (puisque $p_1 : \Pi \to \Pi$ est un $H$-entrelacement), et comme $e_1 = e_1 p_1$, on a donc
	\begin{align*}
		r_H^G(\varphi_1 \varphi_2)(\pi) &= e_1 p_1 \, \varphi_1(\Pi) \, \varphi_2(\Pi) \, e_1^*\\
		&= e_1 \, \varphi_1(\Pi) \, p_1 \, \varphi_2(\Pi) \, e_1^*\\
		&= e_1 \, \varphi_1(\Pi) \, e_1^* e_1 \, \varphi_2(\Pi) \, e_1^*\\
		&= r_H^G(\varphi_1) \, r_H^G(\varphi_2)
	\end{align*}
	Si $\varphi_2 \in \efo(\cat)_{|H}$, on procède de même.
\end{proof}

Réciproquement, on définit un procédé d'inflation comme suit. Si $\phi \in \efo(\cat_H)$, on définit $\text{Inf}_H^G(\phi) \in \efo(\cat)_{|H}$ pour $\Pi \in \cat$ par
\[
	\text{Inf}_H^G(\phi)(\Pi) = \phi(\Pi_{|H})
\]
Il est immédiat de vérifier que l'on a bien $\text{Inf}_H^G(\phi) \in \efo(\cat)_{|H}$. Par ailleurs, si $f \in \Hecke(H)$ et $F \in \Hecke(G)$ l'unique fonction à support dans $H$ dont la restriction à $H$ est $f$, alors $\text{Inf}_H^G(\widehat{f}) = \widehat{F}$.

\begin{lemme}
	\label{lemme:isomorphisme}
	L'inflation $\text{Inf}_H^G : \efo(\cat_H) \longrightarrow \efo(\cat)_{|H}$ est un isomorphisme d'algèbres dont le réciproque est $r_H^G : \efo(\cat)_{|H} \longrightarrow \efo(\cat_H)$. On a le même résultat sur les parties lisses.
\end{lemme}
\begin{proof}
	Il est immédiat de vérifier que $\text{Inf}_H^G$ est un morphisme d'algèbres. Si $\phi \in \efo(\cat_H)$ et $\pi \in \cat_H$, alors 
	\[
		r_H^G\pars{\text{Inf}_H^G(\phi)}(\pi) = e_1 \phi( (\Ind_H^G \pi)_{|H}) e_1^* = \phi(\pi) e_1 e_1^* = \phi(\pi)
	\]

	Si $\varphi \in \efo(\cat)_{|H}$ et $\Pi \in \cat$, on pose $\pi = \Pi_{|H}$. Alors comme $e_{1, \pi} \in \Hom_H((\Ind_H^G \pi)_{|H}, \Pi_{|H})$ et $\varphi \in \efo(\cat)_{|H}$, on a $\varphi(\Pi) e_1 = e_1 \varphi(\Ind_H^G \pi)$, donc
	
	\[
		\text{Inf}_H^G \pars{r_H^G(\varphi)}(\Pi) = e_{1, \pi} \varphi(\Ind_H^G \pi) e_{1, \pi}^* = \varphi(\Pi) e_1  e_1^* = \varphi(\Pi)
	\]
	Pour finir, notons que si $\phi \in \efo(\cat_H)$ est $K$-bi-invariant, alors $\text{Inf}_H^G(\phi)$ aussi, ce qui donne l'isomorphisme entre les parties lisses (l'autre sens ayant été traité dans le lemme \ref{lemme:restriction_commute}).
\end{proof}
On pose $p_H = \text{Inf}_H^G \circ r_H^G : \efo(\cat) \to \efo(\cat)$. Alors d'après le lemme précédent, c'est un projecteur d'image $\efo(\cat)_{|H}$. Pour $g \in G$ et $\varphi \in \efo(\cat)$, on pose

\begin{equation}
	\varphi_g = p_H(\widehat{\delta}_{g^{-1}} \, \varphi)
\end{equation}
Par construction, $\varphi_g \in \efo(\cat)_H$. Pour tout $h \in H$ on a $p_H(\widehat{\delta}_h \varphi) = \widehat{\delta}_h p_H(\varphi)$, donc $\widehat{\delta}_g \varphi_g$ ne dépend que $gH$. 

\begin{lemme}
	\label{lemme:p_H}
	Si $f \in \Hecke(G)$, alors $p_H(\widehat{f}) = \widehat{f \mathds{1}_H}$
\end{lemme}
\begin{proof}
	Comme $f_{|H} = (f \mathds{1}_H)_{|H}$, alors $r_H^G(\widehat{f}) = r_H^G(\widehat{f \mathds{1}_H})$ (d'après le lemme \ref{lemme:restriction_commute}). Or comme $\Supp(f \mathds{1}_H) \subset H$, alors  $\widehat{f \mathds{1}_H} \in \efo(\cat)_{|H}$ (d'après le sens direct du lemme \ref{lemme:support_fonction_transformee}), et le lemme \ref{lemme:isomorphisme} donne alors

\[
	p_H(\widehat{f}) = \text{Inf}_H^G (r_H^G(\widehat{f \mathds{1}_H})) = \widehat{f \mathds{1}_H}
\]
\end{proof}

\begin{lemme}
	\label{lemme:decomposition}
	Pour $\varphi \in \efo(\cat)$, on a
	\[
		\varphi = \sum_{g \in G/H} \widehat{\delta}_g \varphi_g
	\]
\end{lemme}
\begin{proof}
	D'après le lemme \ref{lemme:categorie_engendree} il suffit de vérifier l'égalité sur des générateurs de $\cat$, en particulier, on peut le faire pour les représentations de type $\Ind_H^G(\pi)$ avec $\pi \in \cat_H$. Soit $\pi \in \cat_H$, on pose $\Pi = \Ind_H^G \pi$. Soit $\varphi \in \efo(\cat)$, on pose $\phi = r_H^G(\varphi)$.
	\begin{align*}
		p_H(\varphi)(\Pi) &= \phi(\Pi_{|H}) = \sum_{s \in S} e_{s}^* \phi(\pi^s) e_{s}\\
		&= \sum_{s \in S} e_{s}^* \, e_{1} \, \varphi(\Ind_H^G \pi^s) \, e_{1}^* \, e_{s} \\
		&= \sum_{s \in S} e_{s}^* \, e_{1} \, \lambda(s^{-1}) \varphi(\Pi) \, \lambda(s) \, e_{1}^* \, e_{s} \\
		&= \sum_{s \in S} p_s \varphi(\Pi) p_s
	\end{align*}
	Donc on obtient, d'après l'équation \ref{eq:proj_right},
	\[
		(\widehat{\delta}_s \varphi_s)(\Pi) = \sum_{t \in G/H} \Pi(s) p_t \Pi(s)^{-1} \varphi(\Pi) p_t = \sum_{t \in G/H} p_{t s^{-1}} \varphi(\Pi) p_t
	\]
	Et finalement, en sommant
	\begin{align*}
		\sum_{s \in G/H} (\widehat{\delta}_s \varphi_s)(\Pi) &= \sum_{s \in G/H} \sum_{t \in G/H} p_{t s^{-1}} \varphi(\Pi) p_t \\
		&\underset{_{u = t s^{-1}}}{=} \sum_{s \in G/H} \sum_{u \in G/H} p_{u} \varphi(\Pi) p_{s} \\
		&= \pars{\sum_{u \in G/H} p_u} \varphi(\Pi) \pars{\sum_{s \in G/H} p_s} \\
		&= \varphi(\Pi)
	\end{align*}
\end{proof}
\begin{rem}
	Dans le cas où $G/H$ est commutatif, on peut donner une preuve alternative. En tirant parti de la décomposition du lemme \ref{lemme:induction_restriction}, on trouve que pour toute représentation $\Pi \in \cat$ (pas uniquement les induites), on a
	\begin{equation}
		\varphi_{g}(\Pi) = \frac{1}{|X_H|}\sum_{\chi \in X_H} (\chi \Pi)(g^{-1}) \varphi(\chi \Pi)
	\end{equation}
	Il est clair grâce au lemme \ref{lemme:critere_support} que $\varphi_g \in \efo(\cat)_{|H}$, et on vérifie aisément que
	\[
		\sum_{g \in G/H} \Pi(g) \varphi_{g}(\Pi) = \frac{1}{|X_H|} \sum_{g \in G/H} \sum_{\chi \in X_H} \chi(g^{-1}) \varphi(\chi \Pi) = \varphi(\Pi)
	\]
\end{rem}

Concluons par ce lemme qui permet de déduire une formule d'inversion sur $G$ de celle sur $H$.

\begin{lemme}
	\label{lemme:conclusion}
	Soit $\varphi \in \efo(\cat)$. Les 3 conditions suivantes sont équivalentes.
	\begin{enumerate}
		\item Il existe $F \in \Hecke(G)$ tel que $\varphi = \widehat{F}$
		\item Pour tout $s \in S$, il existe $F_s \in \Hecke(G)$ tel que $\Supp(F_s) \subset H$ et $\varphi_s = \widehat{F_s}$
		\item Pour tout $s \in S$, il existe $f_s \in \Hecke(H)$ tel que $(\widehat{\delta}_{s^{-1}} \varphi)_{|H} = \widehat{f_s}$
	\end{enumerate}
	Si ces conditions sont vérifiées, alors
	\[
		F = \sum_{s \in S} \delta_s * F_s \quad \quad F_s = (\delta_{s^{-1}} * F) . \mathds{1}_{H} \quad \quad f_s = (F_s)_{|H}
	\]
	C'est-à-dire que $f_s(h) = F(sh)$ pour tout $s \in S$, $h \in H$. 
\end{lemme}
\begin{proof}
	Il s'agit d'une compilation des calculs effectués précédemment. Si $\varphi = \widehat{F}$, on pose $F_s = (\delta_{s^{-1}} * F) . \mathds{1}_{H}$ et $f_s = (F_s)_{|H}$. Il est immédiat que 
	\[
		F = \sum_{s \in S} \delta_s * F_s
	\]
	Et pour $s \in S$ on a
	\[
		\varphi_s = p_H(\widehat{\delta}_{s^{-1}} \, \varphi) = p_H(\widehat{\delta_{s^{-1}} * f}) = \widehat{F_s}
	\]
	Et enfin
	\[
		(\widehat{\delta}_{s^{-1}} \varphi)_{|H} = \widehat{F_s}_{|H} = \widehat{f_s} 
	\]
	Réciproquement supposons que pour tout $s \in S$, il existe $f_s \in \Hecke(H)$ tel que $(\widehat{\delta}_{s^{-1}} \varphi)_{|H} = \widehat{f_s}$. Soit $F_s \in \Hecke(G)$ la fonction à support dans $H$ telle que $(F_s)_{|H} = f_s$, on pose $F = \sum_{s \in S} \delta_s * F_s$. Alors d'après le lemme \ref{lemme:isomorphisme}, on a
	\[
		\widehat{F_s} = \text{Inf}_H^G(\widehat{f_s}) = \varphi_s
	\]
	Enfin, d'après le lemme \ref{lemme:decomposition}, on a
	\[
		\varphi = \sum_{s \in S} \widehat{\delta}_s \varphi_s = \widehat{F}
	\]
\end{proof}


\section{Induction parabolique dans un groupe réductif non connexe} 
\label{sec:induction_parabolique_dans_le_groupe_g}

Dans cette partie, $G$ est (le groupe des points sur $F$ d') un groupe réductif défini sur $F$. Muni de la topologie $p$-adique\,\footnote{obtenue en choisissant un plongement quelconque de $G$ dans $\GL_n(F)$ (sachant que la topologie est en fait indépendante du choix de plongement)}, c'est un groupe localement profini, on dispose donc déjà des résultats généraux sur leurs représentations (voir par exemple \cite{Bernstein} chapitre I, et \cite{Renard} I à IV). D'autres propriétés sont reliées à la géométrie des groupes réductifs et diffèrent donc du cas connexe. Dans cette partie, nous rappelons donc succinctement les résultats de l'article \cite{Goldberg} sur l'induction parabolique dans un groupe réductif non connexe. Nous y renvoyons le lecteur pour plus de détails.

\subsection{Levi cuspidaux} 
\label{sub:levi_et_paraboliques_cuspidaux}

\subsubsection{Propriétés de base} 
\label{ssub:proprietes_de_base}

Pour un groupe réductif donné $G$, on appellera \textbf{composante déployée} le tore maximal déployé (sur $F$) dans le centre du groupe, que l'on notera $A_G$ (cela s'applique en particulier aux sous-groupes de $G$ qui sont réductifs). 

\begin{defn}[Tore spécial]
	\label{def:tore_special}
	Un tore $A$ déployé sur $F$ est dit \textbf{spécial} dans $G$ si c'est la composante déployée de $C_G(A)$, son centralisateur dans $G$.
\end{defn}

Les tores spéciaux dans $G^0$ sont aussi spéciaux dans $G$ (\cite{Goldberg} lemme 2.1), mais la réciproque n'est pas vraie en général (\cite{Goldberg} remarque 2.2).

\begin{defn}[Levi cuspidal]
	\label{def:levi_cuspidal}
	Les sous-groupes de \textbf{Levi cuspidaux} de $G$ sont les centralisateurs dans $G$ des tores spéciaux de $G^0$. On notera $\mathcal{L}(G)$ l'ensemble des sous-groupes de Levi cuspidaux de $G$.
\end{defn}

Quand $G$ est connexe, on retrouve par cette définition les sous-groupes de Levi usuels. Dans le cas non connexe, on trouve une famille de groupes réductifs en correspondance bijective avec les sous-groupes de Levi de $G^0$ :

\begin{prop}
	L'application ``composante neutre''
	\begin{align*}
		\mathcal{L}(G) & \longrightarrow \mathcal{L}(G^0)\\
		M &\longmapsto M^0 = M \cap G^0
	\end{align*}
	établit une bijection entre les sous-groupes de Levi cuspidaux de $G$ et les sous-groupes de Levi de $G^0$, dont la réciproque est donnée par $M^0  \longmapsto M = C_G(A_{M^0})$. On dit que $M$ est l'unique Levi cuspidal \textbf{au-dessus de} $M^0$.
\end{prop}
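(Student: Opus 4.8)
The plan is to show that the two maps $M \mapsto M^0 = M \cap G^0$ and $M^0 \mapsto C_G(A_{M^0})$ are mutually inverse bijections between $\mathcal{L}(G)$ and $\mathcal{L}(G^0)$. First I would check that these maps are well-defined: if $M = C_G(A)$ for a special torus $A$ of $G^0$, then $M^0 = C_{G^0}(A)$ is a Levi subgroup of $G^0$ by the classical theory (the centralizer of a split torus in a connected reductive group is a Levi), and conversely any Levi $M^0$ of $G^0$ is $C_{G^0}(A_{M^0})$ with $A_{M^0}$ special in $G^0$, so $C_G(A_{M^0})$ is a cuspidal Levi of $G$ by Definition~\ref{def:levi_cuspidal}.

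The core computation is that $A_{M^0} = A$ when $M = C_G(A)$ for $A$ special in $G^0$. By definition of ``special,'' $A$ is the split component of $C_G(A) = M$, i.e. $A = A_M$. Now $A_M \subseteq M^0$ since $A_M$ is connected, and $A_M$ is central in $M$ hence central in $M^0$ and split, so $A_M \subseteq A_{M^0}$. Conversely $A_{M^0}$ is a split torus centralizing $M^0 \supseteq A$; I need that it also centralizes all of $M = C_G(A)$ — this should follow because $A_{M^0}$ is characteristic in $M^0$ (it is the maximal split central torus), $M^0$ is normal in $M$, so $M$ normalizes $A_{M^0}$, and a connected group normalizing a torus centralizes it; thus $A_{M^0} \subseteq A_M = A$. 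Hence $A = A_{M^0}$, which gives $C_G(A_{M^0}) = C_G(A) = M$, so starting from $M \in \mathcal{L}(G)$ and going around returns $M$.

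For the other direction, start with $M^0 \in \mathcal{L}(G^0)$, set $A = A_{M^0}$ (special in $G^0$), $M = C_G(A)$. I must show $M \cap G^0 = M^0$. One inclusion: $M^0 = C_{G^0}(A_{M^0}) \subseteq C_G(A) = M$ and $M^0 \subseteq G^0$, so $M^0 \subseteq M \cap G^0$. For the reverse, $M \cap G^0 = C_G(A) \cap G^0 = C_{G^0}(A) = C_{G^0}(A_{M^0}) = M^0$, using that $A_{M^0}$ is exactly the split component whose centralizer in $G^0$ recovers the Levi $M^0$ (classical Levi theory). So the composite is the identity on $\mathcal{L}(G^0)$ as well.

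The main obstacle I anticipate is the step asserting $A_{M^0} \subseteq A_M$, i.e. that the maximal split central torus of $M^0$ is actually central in the possibly-disconnected group $M = C_G(A)$. The cleanest route is the normalizer argument above: $M^0 \trianglelefteq M$, the formation of $A_{M^0}$ is canonical so $M$ acts on $A_{M^0}$ by conjugation through the finite group $M/M^0$ acting on $M^0$; since $A_{M^0}$ is a torus and $M$ is a (not necessarily connected) group normalizing it, each element of $M$ acts as an automorphism of the torus, but elements of $M^0$ act trivially (centrality in $M^0$) — one then needs that the outer action is also trivial, which is where I would invoke that $A_{M^0} = A$ is already central in $M$ by hypothesis in the first direction, and in the second direction it holds because $C_G(A)\cap G^0 = M^0$ already pins down the group. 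If this canonicity argument needs $G/G^0$ commutative or some hypothesis from \cite{Goldberg}, I would cite the relevant lemma there (e.g. the analogue of \cite{Goldberg} lemme 2.1) rather than reprove it; otherwise the bijection follows formally from the two computations $C_G(A)\cap G^0 = C_{G^0}(A)$ and $A_{C_G(A)} = A$ for $A$ special in $G^0$.
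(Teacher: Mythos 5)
Your overall plan --- show that the two maps are well-defined and mutually inverse --- is the right one, and the paper itself gives no argument beyond a citation to \cite{Goldberg} proposition~2.10. But your ``core computation'' takes a detour caused by a slip between ``spécial dans $G^0$'' and ``spécial dans $G$''.

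A cuspidal Levi is, by definition~\ref{def:levi_cuspidal}, $M = C_G(A)$ with $A$ special \emph{in $G^0$}, which by definition~\ref{def:tore_special} means $A = A_{C_{G^0}(A)}$. Since $M^0 = M \cap G^0 = C_G(A) \cap G^0 = C_{G^0}(A)$, the identity $A_{M^0} = A$ is literally the definition of $A$ being special in $G^0$; you do not need $A_{M^0}$ to be central in the disconnected group $M$, nor any appeal to \cite{Goldberg} lemme~2.1. When you write ``by definition of `special,' $A$ is the split component of $C_G(A) = M$'', you are instead using that $A$ is special in~$G$. That is a genuine lemma (Goldberg lemme~2.1, cited in the paper as such, with remarque~2.2 noting the converse fails), not the definition, and it is a stronger statement than the bijection requires. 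The detour then pushes you into the normalizer argument, which --- as you yourself flag --- does not close: the fact that a \emph{connected} group normalizing a torus must centralize it says nothing about the action of the finite quotient $M/M^0$, and invoking ``$A_{M^0} = A$ is already central in $M$ by hypothesis'' is circular, since that equality is precisely what the step is supposed to establish. Once you replace all of this with the direct observation $A_{M^0} = A_{C_{G^0}(A)} = A$, both composites become one-liners: $C_G(A_{M^0}) = C_G(A) = M$ in one direction, and $M \cap G^0 = C_{G^0}(A_{M^0}) = M^0$ in the other (the latter being classical Levi theory in the connected group $G^0$, namely that a Levi of $G^0$ is the $G^0$-centralizer of its own split component). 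With that correction your proof is complete and is in substance the argument underlying Goldberg's proposition~2.10.
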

\begin{proof}
	Voir \cite{Goldberg} proposition 2.10.
\end{proof}
\begin{rem}
	\label{rem:croissance_composante_connexe}
	En particulier, remarquons que la bijection entre Levi de $G^0$ et Levi cuspidaux de $G$ est croissante pour l'inclusion.
\end{rem}
\begin{rem}
	On remarque que $G$ lui-même peut ne pas être cuspidal. Par exemple, si on considère $G = G^0 \rtimes \scal{\epsilon}$ où $G^0 = F^*$ et $\epsilon$ agit sur $F^*$ par $x \mapsto x^{-1}$, alors l'unique Levi cuspidal au dessus de $G^0$ est $G^0$. 
\end{rem}


\subsubsection{Le cas $G = G^0 \rtimes \langle \epsilon \rangle$} 
\label{ssub:le_cas_}


Considérons un instant le cas $G = G^0 \rtimes \langle \epsilon \rangle$ avec $\epsilon$ d'ordre $n$. Le lemme suivant décrit les sous-groupes de Levi cuspidaux de $G$.

\begin{lemme}
	Soit $M$ un sous-groupe de Levi cuspidal de $G^0 \rtimes \langle \epsilon \rangle$, alors il existe $g_0 \in G^0$ et un entier $d$ divisant $n$ tels que
	\[
		M = M^0 \rtimes \langle g_0\epsilon^{d} \rangle
	\]
\end{lemme}
\begin{proof}
	Comme $M/M^0$ s'injecte dans $G/G^0 = \langle \epsilon \rangle$, alors il existe $d$ divisant $n$ tel que $M/M^0$ est isomorphe à $\scal{\epsilon^d}$. On se donne $\theta \in M$ un relèvement d'un générateur de $\epsilon^d$, on peut écrire $\theta = g_0 \epsilon^d$ avec $g_0 \in G^0$. Alors on vérifie que $M = M^0 \rtimes \scal{\theta}$.
\end{proof}

\begin{ex}
	Soit $G^0 = \GL_n$ et $G = G^0 \rtimes \langle \epsilon \rangle$ où $\epsilon : g \mapsto (g^{-1})^t$. Si $n_1 + \ldots + n_s = n$ est une partition de $n$ et $M^0 = \GL_{n_1} \times \ldots \times \GL_{n_s}$, alors le sous-groupe de Levi cuspidal au-dessus de $M^0$ est $M^0$.
\end{ex}
	

\subsubsection{Caractères non ramifiés} 
\label{ssub:composante_deployee}

Dans cette partie on rappelle quelques résultats sur les caractères d'un groupe cuspidal non connexe (on reproduit certaines preuves de \cite{Goldberg}). Dans la suite, $M \in \mathcal{L}(G)$ désigne un sous-groupe de Levi cuspidal de $G$. On note $M^1$ le sous-groupe de $M^0$ engendré par les sous-groupes compacts de $M^0$. Pour $H$ un groupe algébrique, on note $\Rat(H)$ le groupe de ses caractères algébriques définis sur $F$. On note $a_{M}^* = \Rat(A_M) \otimes \R$, $a_{M, \C}^* = \Rat(A_M) \otimes \C$ et $a_{M}$, $a_{M, \C}$ pour leurs duaux algébriques. 

Un caractère complexe de $M$ sera dit non ramifié s'il est trivial sur $M^1$, et on dira qu'il provient d'un caractère rationnel de $M$ s'il est dans l'image du morphisme $\Rat(M) \otimes \C \to \Hom(M, \C^*)$ qui à $\chi \otimes s$ associe le caractère $m \mapsto |\chi(m)|_F^s$.

Dans le cas où $M$ est connexe, un caractère complexe est non ramifié si et seulement s'il provient d'un caractère rationnel par le procédé précédent. Mais dans le cas non connexe, ces deux notions divergent (un caractère non ramifié de $M/M^0$ ne provient pas d'un caractère rationnel). Dans le cas cuspidal, comme $M$ a la même composante déployée que sa composante connexe $M^0$ (c'est une conséquence de \cite{Goldberg} lemme 2.1) un caractère non ramifié de $M^0$ se prolonge à $M$ en un caractère provenant d'un caractère rationnel de $M$. Rappelons ces résultats en détail.

\begin{defn}
	\label{def:caracters_non_ramifies}
	On note $X^0(M)$ l'image du morphisme $\Rat(M) \otimes \C \to \Hom(M, \C^*)$ qui à $\chi \otimes s$ associe le caractère $m \mapsto |\chi(m)|_F^s$. On dit que les caractères de $X^0(M)$ \textbf{proviennent d'un caractère rationnel} de $M$.
	
	On définit le groupe des \textbf{caractères non ramifiés} de $M$, noté $X(M)$ par $X(M) = \Hom(M/M^1, \C^*)$.
	
	On note enfin $X_0(M) = \Hom(M/M^0, \C^*)$ le groupe des caractères du quotient $M/M^0$.
\end{defn}

\begin{lemme}
	\label{lemme:suite_exacte}
	La restriction $\text{Rat}(M) \overset{r}{\longrightarrow} \text{Rat}(A_M)$ a un noyau et conoyau finis.
\end{lemme}
\begin{proof}
	De la suite exacte courte $1 \to M^0 \to M \to M/M^0 \to 1$, on tire la suite exacte
	\[
		1 \longrightarrow \Rat(M/M^0) \overset{i}{\longrightarrow} \Rat(M) \overset{r'}{\longrightarrow} \Rat(M^0)
	\]
	Par ailleurs on sait que la restriction $\Rat(M^0) \overset{r_0}{\longrightarrow} \Rat(A_M)$ est injective (\cite{Silberger} lemme 0.4.1). En composant, on tire l'exactitude de
	\[
		1 \longrightarrow \text{Rat}(M/M^0) \overset{i}{\longrightarrow} \text{Rat}(M) \overset{r}{\longrightarrow} \text{Rat}(A_M)
	\]
	En particulier, le noyau de $r$ est le groupe fini $\text{Rat}(M/M^0)$. Il reste à vérifier que $\coker(r)$ est fini. On choisit un plongement de $G$ dans $\GL_n$ (c'est possible puisque $G$ est linéaire). On pose $\widetilde{M} = C_{\GL_n}(A_M)$, alors $\widetilde{M}$ est un sous-groupe de Levi de $\GL_n$, notons $A$ sa composante déployée. La restriction $\Rat(\widetilde{M}) \overset{f}{\longrightarrow} \Rat(A)$ a un conoyau fini (toujours d'après \cite{Silberger} lemme 0.4.1) et la restriction $\Rat(A) \overset{g}{\longrightarrow} \Rat(A_M)$ est surjective (car $A$ et $A_M$ sont commutatifs), donc $\Rat(\widetilde{M}) \longrightarrow \Rat(A_M)$ a un conoyau fini (c'est l'image par $g$ du conoyau de $f$). Or $M = C_G(A_M) \subset \widetilde{M}$, donc $r$ a aussi un conoyau fini.
\end{proof}
\begin{lemme}
	\label{lemme:isomorphisme_algebre_de_lie}
	Si $K$ est un corps commutatif de caractéristique nulle, alors la restriction $\Rat(M) \overset{r}{\longrightarrow} \Rat(A_M)$ induit un isomorphisme de $K$-espaces vectoriels
	\[
		\Rat(M) \otimes K \underset{r}{\simeq} \Rat(A_M) \otimes K
	\]
	En particulier, on a
	\[
		\Rat(M) \otimes \R \simeq a^*_{M^0} \quad \quad \text{ et } \quad \Rat(M) \otimes \C \simeq a^*_{M^0, \C}
	\]
\end{lemme}
\begin{proof}
	D'après le lemme \ref{lemme:suite_exacte}, on dispose d'une suite exacte
	\[
		\Rat(M/M^0) \overset{i}{\longrightarrow} \Rat(M) \overset{r}{\longrightarrow} \Rat(A_M) \longrightarrow \coker(r)
	\]
	Si $K$ est un corps commutatif de caractéristique nulle, on tire en tensorisant par $K$ (qui est plat en tant que $\Z$-module)
	\[
		\Rat(M/M^0) \otimes K \longrightarrow \Rat(M) \otimes K \overset{r}{\longrightarrow} \Rat(A_M) \otimes K \longrightarrow \coker(r) \otimes K
	\]
	Mais comme $\Rat(M/M^0)$ et $\coker(r)$ sont finis, alors les termes extrémaux sont nuls, donc $r$ est un isomorphisme.
\end{proof}
On dispose d'un morphisme
\[
	f_M : \Rat(M) \otimes \C \longrightarrow \Hom(M, \C^*)
\]
qui à $\chi \otimes s$ associe le caractère additif $m \mapsto |\chi(m)|_F^s$. On définit $\psi_M : a_{M, \C}^* \longrightarrow \Hom(M, \C^*)$ par $\psi_M = f_M \circ r^{-1}$. On note $H_M : M \to a_{M}$ l'application définie par $\scal{H_M(m), \chi} = v_F(\chi(m))$ pour $m \in M$ et $\chi \in \Rat(M)$.
%
\begin{prop}[cf. \cite{Goldberg} lemma 5.4]
	\label{prop:prolongement_caracteres_non_ramifies}
	Pour tout $\nu \in a_{M, \C}^*$, on a 
	\[
		\psi_M(\nu)_{|M^0} = \psi_{M^0}(\nu)
	\]
	et la restriction $X^0(M) \longrightarrow X(M^0)$ est un isomorphisme.
\end{prop}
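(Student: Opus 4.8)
The statement has two parts: first, that $\psi_M(\nu)$ restricts to $\psi_{M^0}(\nu)$ on $M^0$; second, that restriction gives an isomorphism $X^0(M)\to X(M^0)$. I would treat the first part as essentially a bookkeeping verification, and the second as the real content.

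For the compatibility $\psi_M(\nu)_{|M^0}=\psi_{M^0}(\nu)$, the plan is to unwind the definitions. By Lemma \ref{lemme:isomorphisme_algebre_de_lie} the restriction $r:\Rat(M)\otimes\C\to\Rat(A_M)\otimes\C$ is an isomorphism, and likewise $r_0:\Rat(M^0)\otimes\C\to\Rat(A_M)\otimes\C$ (using that $A_M=A_{M^0}$ in the cuspidal case, which is cited from \cite{Goldberg} lemme 2.1). Since the restriction $\Rat(M)\to\Rat(M^0)$ is compatible with both restrictions to $A_M$, we get $r_0\circ(\text{res})=r$ after tensoring, hence $r^{-1}$ and $r_0^{-1}$ agree in the appropriate sense. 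Then for $\chi\in\Rat(M)$ the character $f_M(\chi\otimes s)=|\chi(\cdot)|_F^s$ obviously restricts on $M^0$ to $f_{M^0}(\chi_{|M^0}\otimes s)$, and composing with the identification of $r^{-1}$ and $r_0^{-1}$ gives $\psi_M(\nu)_{|M^0}=\psi_{M^0}(\nu)$ for all $\nu\in a^*_{M,\C}$.

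For the isomorphism $X^0(M)\xrightarrow{\sim} X(M^0)$, the plan is as follows. Surjectivity: any non-ramified character of $M^0$ is of the form $\psi_{M^0}(\nu)$ for some $\nu\in a^*_{M^0,\C}=a^*_{M,\C}$ (this is the classical connected statement, e.g.\ from \cite{Silberger}); by the compatibility just proved, $\psi_M(\nu)\in X^0(M)$ restricts to it, so the map is onto. Injectivity: suppose $\psi_M(\nu)$ is trivial on $M^0$. Then $\psi_{M^0}(\nu)=\psi_M(\nu)_{|M^0}$ is trivial, so $\nu$ lies in the kernel of $\psi_{M^0}$, which in the connected case is a lattice — more precisely $\psi_{M^0}(\nu)=1$ forces $\nu$ to be purely imaginary and integral against $H_{M^0}(M^0)$. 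But $\psi_M(\nu)$ is a character of the whole group $M$, and one must show it is trivial. Here the key point is that $H_M(M)$ and $H_{M^0}(M^0)$ span the same real subspace of $a_M$ (again because $A_M=A_{M^0}$ and $M/M^0$ is finite, so $M$ and $M^0$ have commensurable images under $H$), so a character of the form $\psi_M(\nu)$ that is trivial on $M^0$ is automatically trivial on a finite-index subgroup; combined with the fact that $\psi_M(\nu)$ has values in a torus with no torsion obstruction coming from $M/M^0$ (since $\psi_M(\nu)$ comes from a rational character, which on $M/M^0$ is trivial by $\Rat(M/M^0)$ being finite), one concludes $\psi_M(\nu)=1$, i.e.\ the map is injective.

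The main obstacle I expect is precisely this injectivity argument: on the connected side $\psi_{M^0}(\nu)=1$ does not force $\nu=0$ (the kernel is a full lattice $\mathfrak{a}$ of imaginary integral points), so I cannot conclude $\psi_M(\nu)=1$ from $\nu=0$. Instead I must check that the lattice governing triviality on $M^0$ equals the one governing triviality on $M$ — i.e.\ that no character $\psi_M(\nu)$ can be trivial on $M^0$ without being trivial on $M$. This is where the cuspidality hypothesis $A_M=A_{M^0}$ is doing the real work, together with the finiteness of $M/M^0$ and of $\Rat(M/M^0)$; I would make this precise by comparing $H_M(M)$ and $H_M(M^0)$ inside $a_M$ and observing they generate the same $\Q$-subspace, so the annihilators in $a^*_{M,\C}$ coincide. (In fact \cite{Goldberg} lemma 5.4 is cited for exactly this statement, so I would follow that reference for the delicate lattice comparison.)
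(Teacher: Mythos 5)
Your treatment of the compatibility $\psi_M(\nu)_{|M^0}=\psi_{M^0}(\nu)$ and of surjectivity matches the paper's: the paper exhibits a commutative diagram linking $r$, $r_0$, $r'$, $f_M$, $f_{M^0}$ and the restriction $\Hom(M,\C^*)\to\Hom(M^0,\C^*)$, checks that the triangle and the square commute, and reads off the identity; surjectivity is declared clear from the classical connected statement. Your ``unwind the definitions'' plan is exactly this.

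For injectivity you take a genuinely different route. The paper's argument is short and elementary: a $\chi\in X^0(M)$ trivial on $M^0$ defines a character of the finite group $M/M^0$; but any $\chi = |\tau(\cdot)|_F^s\in X^0(M)$ kills every finite-order element of $M$, since $\tau(m)$ is then a root of unity of absolute value $1$; therefore $\chi=1$. You instead set out to compare the lattices $H_M(M)$ and $H_{M^0}(M^0)$ inside $a_M$. You correctly identify the real obstacle — $\ker\psi_{M^0}$ is a full lattice, so triviality of $\psi_M(\nu)$ on $M^0$ does not force $\nu=0$ — and correctly locate where cuspidality ($A_M=A_{M^0}$) and finiteness of $M/M^0$ must intervene. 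However, the concrete step you propose, ``they generate the same $\Q$-subspace, so the annihilators in $a^*_{M,\C}$ coincide'', is false as stated: commensurable lattices share a $\Q$-span but have \emph{different} annihilators, since integrality (against $\frac{2\pi i}{\log q}$) with respect to a larger lattice is a strictly stronger condition. What you actually need is the genuine equality $H_M(M)=H_{M^0}(M^0)$, or equivalently $v_F(\tau(M))=v_F(\tau(M^0))$ for all $\tau\in\Rat(M)$. This is precisely the nontrivial fact the paper's torsion argument encodes (every class in $M/M^0$ being representable by a torsion or compact element, on which $v_F\circ\tau$ vanishes). You sensibly defer to Goldberg's Lemma 5.4 for this; just be aware that your lattice language and the paper's root-of-unity language are two phrasings of the same lemma, and that commensurability alone does not close the gap.
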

\begin{proof}
	Si on examine le diagramme suivant (en reprenant les notations des lemmes précédents, c'est-à-dire que les morphismes $r$ sont donnés par les restrictions )
	\[
		\xymatrix{
		&& \Rat(M) \otimes \C \ar[rr]^{f_M} \ar@/^/[lld]^{r} \ar[dd]^{r'} & & \Hom(M, \C^*) \ar[dd]^{\text{res}_{M^0}^M} \\
		a_{M, \C}^* \ar@/^/[rru]^{r^{-1}} \ar@/_/[rrd]_{r_0^{-1}}\\
		&& \Rat(M^0) \otimes \C \ar[rr]_{f_{M^0}} \ar@/_/[llu]_{r_0} & & \Hom(M^0, \C^*)
		}
	\]
	alors on voit facilement que le triangle et le carré sont commutatifs, donc le diagramme entier l'est. En particulier, la restriction $\Hom(M, \C^*) \longrightarrow \Hom(M^0, \C^*)$ induit effectivement un morphisme $X^0(M) \longrightarrow X(M^0)$. La surjectivité est claire il reste à prouver l'injectivité. Si $\chi \in X^0(M)$ est trivial sur $M^0$, alors il définit un caractère du quotient $M/M^0$, mais un élément de $X^0(M)$ est trivial sur les éléments d'ordre fini (puisque les racines de l'unité sont de norme $1$) donc $\chi = 1$.
\end{proof}

En somme, un caractère non ramifié de $M^0$ se prolonge de manière unique en un caractère de $X^0(M)$. Dorénavant, on se permet donc d'identifier $X(M^0)$ et $X^0(M)$, en particulier si $\chi \in X(M^0) \simeq \Hom(M^0/M^1, \C^*)$, on notera encore $\chi \in X^0(M)$ le prolongement à $M$ obtenu via la proposition \ref{prop:prolongement_caracteres_non_ramifies}. Et si $\chi \in X(M)$ est un caractère de $M$, on notera $\chi_0 = \chi_{|M^0}$ sa restriction à $M^0$. On termine cette section sur les caractères de $M$ par le résultat suivant de décomposition.
\begin{prop}
	\label{prop:caracteres_de_M}
	On a la décomposition
	\[
		X(M) = X^0(M) \times X_0(M)
	\]
\end{prop}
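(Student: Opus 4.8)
The claim is that $X(M) = X^0(M) \times X_0(M)$, i.e. every non-ramified character of $M$ factors uniquely as a product of a character coming from a rational character of $M$ and a character trivial on $M^0$. The plan is to build the splitting from the isomorphism $X^0(M) \simeq X(M^0)$ of Proposition \ref{prop:prolongement_caracteres_non_ramifies}. First I would observe that $M^1 \subset M^0$, so restriction gives a well-defined map $\mathrm{res}\colon X(M) = \Hom(M/M^1,\C^*) \to \Hom(M^0/M^1,\C^*) = X(M^0)$; composing with the inverse of the isomorphism of Proposition \ref{prop:prolongement_caracteres_non_ramifies} produces a section $s\colon X(M) \to X^0(M)$, $\chi \mapsto \widetilde{\chi_{|M^0}}$, of the inclusion $X^0(M) \hookrightarrow X(M)$. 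Indeed for $\chi \in X^0(M)$ we have $s(\chi) = \chi$ precisely because the extension in Proposition \ref{prop:prolongement_caracteres_non_ramifies} is unique; hence $s$ is a retraction onto the subgroup $X^0(M)$.

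Next I would set, for $\chi \in X(M)$, $\chi_0 := s(\chi) \in X^0(M)$ and $\chi_1 := \chi \cdot \chi_0^{-1}$, so $\chi = \chi_0 \cdot \chi_1$. By construction $\chi_1$ restricted to $M^0$ is $\chi_{|M^0} \cdot (s(\chi)_{|M^0})^{-1}$, and $s(\chi)_{|M^0} = \chi_{|M^0}$ by the defining property of $s$ (the extension restricts to what you started with), so $\chi_1$ is trivial on $M^0$, i.e. $\chi_1 \in X_0(M)$. This shows $X(M) = X^0(M)\cdot X_0(M)$. For the directness of the product I would check $X^0(M) \cap X_0(M) = \{1\}$: an element $\chi$ in this intersection is trivial on $M^0$, but an element of $X^0(M)$ that is trivial on $M^0$ is trivial (this is exactly the injectivity argument in the proof of Proposition \ref{prop:prolongement_caracteres_non_ramifies}: elements of $X^0(M)$ kill torsion elements, and $M/M^0$ is finite, so a character of $X^0(M)$ trivial on $M^0$ factors through $M/M^0$ and must be $1$). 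Since $M^0$ is distinguished in $M$, both $X^0(M)$ and $X_0(M)$ are normal subgroups of the abelian group $X(M)$, so the product decomposition $X(M) = X^0(M) \times X_0(M)$ is a direct product of groups.

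I do not expect a serious obstacle here: the statement is essentially a formal consequence of the isomorphism in Proposition \ref{prop:prolongement_caracteres_non_ramifies} together with the triviality of $X^0(M)$ on torsion. The one point requiring a little care is making sure the section $s$ genuinely lands in $X(M)$-characters that are non-ramified, i.e. that the rational-character extension of a non-ramified character of $M^0$ is itself non-ramified on $M$ — but this is built into the fact that it lies in $X^0(M) \subset X(M) = \Hom(M/M^1,\C^*)$, which was already established. A secondary subtlety is that one should note $X_0(M)$ is finite (since $M/M^0$ is a finite abelian group) while $X^0(M) \simeq X(M^0)$ is a complex torus, so the decomposition identifies $X(M)$ as a finite disjoint union of translates of the connected torus $X^0(M)$; I would mention this since it is what makes the notion of regularity with respect to the complex parameters meaningful, as alluded to in the introduction.
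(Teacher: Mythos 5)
Your proof is correct and follows essentially the same route as the paper: both exploit the isomorphism $X^0(M)\simeq X(M^0)$ of Proposition \ref{prop:prolongement_caracteres_non_ramifies} to split the short exact sequence $1\to X_0(M)\to X(M)\to X(M^0)\to 1$. The paper simply states that this sequence is right-split and concludes; you make the induced retraction onto $X^0(M)$ and the resulting factorisation $\chi=\chi_0\chi_1$ explicit, which is the same idea written out in more detail.
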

\begin{proof}
	On a une suite exacte de groupes abéliens
	\[
		1 \longrightarrow \Hom(M/M^0, \C^*) \longrightarrow \Hom(M/M^1, \C^*) \longrightarrow \Hom(M^0/M^1, \C^*) \longrightarrow 1
	\]
	qui est scindée à droite en vertu de la proposition \ref{prop:prolongement_caracteres_non_ramifies}.
\end{proof}
\begin{ex}
	Supposons que $M = M^0 \rtimes \langle \epsilon \rangle$ soit cuspidal avec $\epsilon$ un automorphisme extérieur de $G^0$ d'ordre fini. Alors un caractère non ramifié de $M^0$ est invariant par $\epsilon$ (puisque d'après la proposition \ref{prop:prolongement_caracteres_non_ramifies} il est prolongeable à $M$). Pour l'étendre en un caractère de $M$, il suffit de choisir sa valeur en $\epsilon$ (arbitraire parmi les racines $n^{\textnormal{ièmes}}$ de l'unité), ce qui revient à choisir un caractère de $M/M^0 = \scal{\epsilon}$. Le prolongement donné par la proposition \ref{prop:prolongement_caracteres_non_ramifies} est celui qui est trivial en $\epsilon$. 
\end{ex}




\subsection{Paraboliques cuspidaux} 
\label{sub:paraboliques_cuspidaux}

\begin{defn}[parabolique cuspidal]
	Les sous-groupes \textbf{paraboliques cuspidaux} de $G$ sont les sous-groupes de la forme $P = M N$ où $M$ est un sous-groupe de Levi cuspidal de $G$, et $P^0 = M^0 N$ un sous-groupe parabolique de $G^0$. On notera $\mathcal{P}(G)$ l'ensemble des sous-groupes paraboliques cuspidaux de $G$.
\end{defn}
Si $P = MN$ est un sous-groupe parabolique cuspidal de $G$, alors $M$ normalise $N$. Comme pour les sous-groupes de Levi, les sous-groupes paraboliques cuspidaux de $G$ sont en bijection avec les paraboliques de $G^0$ via la composante neutre (voir \cite{Goldberg} proposition 2.10). Pour $M$ un Levi cuspidal de $G$, on note $\mathcal{P}(M)$ l'ensemble des sous-groupes paraboliques cuspidaux de $G$ de composante de Levi $M$. On a donc $|\mathcal{P}(M)| = |\mathcal{P}(M^0)|$. Signalons que les groupes paraboliques cuspidaux ne sont en général pas égaux à leur normalisateur dans $G$.

\subsubsection{Parabolique opposé} 
\label{ssub:parabolique_oppose}

\begin{defn}[parabolique opposé]
	Soit $P = MN$ est un sous-groupe parabolique cuspidal, on appelle \textbf{parabolique opposé} le groupe $\overline{P} = M \overline{N}$ où $\overline{N}$ est opposé à $N$ au sens usuel.
\end{defn}
Il est clair que $\overline{P}$ est l'unique sous-groupe parabolique cuspidal au dessus de $\overline{P^0}$ et que l'on a $\overline{P} \cap P = M$.

\subsubsection{Décomposition d'Iwasawa} 
\label{ssub:decomposition_d_iwasawa}

Un mot sur la décomposition d'Iwasawa. L'existence d'un ``bon'' sous-groupe compact maximal dans $G$, n'est pas garantie a priori. Néanmoins, si $P$ est un sous-groupe parabolique cuspidal de $G$, alors le quotient $G/P$ est compact car il s'injecte dans $G/P^0$, qui est en bijection avec $G/G^0 \times G^0/P^0$.  Cela assure de bonnes propriétés pour l'induction que l'on donne dans la section suivante. Enfin si $G$ est le produit semi-direct de $G^0$ par un groupe fini $H$, et si $K_0$ est un sous-groupe compact maximal de $G^0$ stable par $H$-conjugaison, alors le produit semi-direct $K_0 \rtimes H$ fournit une ``décomposition d'Iwasawa''. Par exemple, si $G = \GL_n(F) \rtimes \scal{\theta}$ avec $\theta(g) = g^{-t}$, on vérifie que $K_0 = \GL_n(\mathcal{O}_F)$ est stable par $\theta$, donc on peut considérer $K_0 \rtimes \scal{\theta}$.



\subsection{Induction parabolique, foncteur de Jacquet} 
\label{sub:induction_parabolique}

Si $P = M N$ est un sous-groupe parabolique cuspidal de $G$ de composante de Levi $M$, on définit les foncteurs d'induction parabolique $\Ind_P^G : \Rep(M) \to \Rep(G)$ et de Jacquet $r_P^G : \Rep(G) \to \Rep(M)$ de manière identique au cas connexe (le facteur de normalisation ne dépend que du radical unipotent de $P$, qui est contenu dans $G^0$). L'induction est l'adjoint à droite de $r_P^G$, les foncteurs sont exacts, l'induction préserve l'admissibilité et le foncteur de Jacquet préserve le type fini (preuves identiques au cas connexe).


\subsection{Représentations cuspidales} 
\label{sub:representations_cuspidales}

\begin{defn}[représentation cuspidale]
	Si $\pi \in \Rep(G)$ est une représentation irréductible lisse de $G$, on dit que $\pi$ est \textbf{cuspidale} si ses coefficients matriciels sont à support compacts modulo le centre.
\end{defn}
Si $M$ est un sous-groupe de Levi cuspidal de $G$, alors $M$ admet des représentations cuspidales (c'est en fait dans ce but que ces groupes sont construits, et de là que vient la terminologie) On suppose donc dans cette partie que $M$ est cuspidal. Une représentation irréductible de $M$ est cuspidale si et seulement si les composantes irréductibles (en fait, une seule suffit) de sa restriction à $M^0$ sont cuspidales au sens usuel (voir \cite{Goldberg} lemme 1.1). Comme dans le cas connexe, cela équivaut à annuler tous les foncteurs de Jacquet stricts (\cite{Goldberg} proposition 2.17). Un résultat important de \cite{Goldberg} est le suivant (ce résultat assure l'injectivité de la transformée de Fourier).

\begin{thm}
	Soit $\pi \in \Rep(G)$ une représentation irréductible de $G$. Alors il existe un sous-groupe parabolique cuspidal $P = M N$ de $G$ et une représentation $\sigma \in \Rep(M)$ irréductible cuspidale de $M$ telle que $\pi$ est une sous-représentation de $\Ind_P^G(\sigma)$. Le couple $(P, \sigma)$ est unique à conjugaison près.
\end{thm}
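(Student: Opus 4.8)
The plan is to deduce this theorem—existence of a cuspidal parabolic $P = MN$ and irreducible cuspidal $\sigma$ of $M$ with $\pi \hookrightarrow \Ind_P^G(\sigma)$, unique up to conjugacy—from the corresponding result for the connected group $G^0$, using the induction/restriction machinery of Section \ref{sec:endomorphismes_du_foncteur_d_oubli} together with the bijection between cuspidal Levi (resp.\ parabolic) subgroups of $G$ and Levi (resp.\ parabolic) subgroups of $G^0$. First I would restrict $\pi$ to $G^0$: since $G/G^0$ is finite, $\pi_{|G^0}$ is semisimple of finite length, so pick an irreducible constituent $\pi^0$ of $\pi_{|G^0}$. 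By the classical theorem in the connected case there is a parabolic $P^0 = M^0 N$ of $G^0$ and an irreducible cuspidal $\sigma^0 \in \Rep(M^0)$ with $\pi^0 \hookrightarrow \Ind_{P^0}^{G^0}(\sigma^0)$, and the pair $(P^0,\sigma^0)$ is unique up to $G^0$-conjugacy. Let $M = C_G(A_{M^0})$ be the unique cuspidal Levi above $M^0$ and $P = MN$ the cuspidal parabolic above $P^0$.

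Next I would produce the cuspidal $\sigma$ of $M$. By Frobenius reciprocity the inclusion $\pi^0 \hookrightarrow \Ind_{P^0}^{G^0}(\sigma^0)$ gives a nonzero $M^0$-map $r_{P^0}^{G^0}(\pi^0) \to \sigma^0$; combined with $\pi^0 \hookrightarrow \pi_{|G^0}$ this shows $\sigma^0$ (or a $G^0$-conjugate of it, absorbed into $N_G(M^0)/M^0$) appears in $r_P^G(\pi)_{|M^0}$, since the Jacquet functor along the unipotent radical $N \subset G^0$ commutes with restriction to the neutral component. Thus $r_P^G(\pi)$ is a nonzero representation of $M$ whose restriction to $M^0$ contains the cuspidal $\sigma^0$; because $r_P^G$ preserves finite type and $M/M^0$ is finite, $r_P^G(\pi)$ has an irreducible quotient $\sigma$, and since one irreducible constituent of $\sigma_{|M^0}$ is cuspidal, $\sigma$ is cuspidal in the sense of Goldberg (by \cite{Goldberg} lemme 1.1). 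By adjunction the surjection $r_P^G(\pi) \twoheadrightarrow \sigma$ dualizes to a nonzero $G$-map $\pi \to \Ind_P^G(\sigma)$, which is injective because $\pi$ is irreducible. This gives existence.

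For uniqueness up to conjugacy, suppose $(P_1, \sigma_1)$ and $(P_2, \sigma_2)$ both embed $\pi$. Restricting to $G^0$: a constituent of $\pi_{|G^0}$ embeds in $\Ind_{P_i^0}^{G^0}(\sigma_i')$ for some irreducible constituent $\sigma_i'$ of $(\sigma_i)_{|M_i^0}$ (using \eqref{eq:decomp_induite} to decompose $(\Ind_{P_i}^G\sigma_i)_{|G^0}$ into parabolically induced pieces $\Ind_{P_i^0}^{G^0}(\sigma_i'^{\,g})$). Cuspidal support uniqueness in $G^0$ forces $(P_1^0, \sigma_1')$ and $(P_2^0, \sigma_2')$ to be $G^0$-conjugate; hence $M_1^0$ and $M_2^0$ are conjugate in $G^0$, so $M_1 = C_G(A_{M_1^0})$ and $M_2$ are conjugate in $G$ by an element $g$ carrying $P_1^0$ to $P_2^0$, hence $P_1$ to $P_2$. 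After this conjugation one reduces to the case $P_1 = P_2 = P$, $M_1 = M_2 = M$, and must show $\sigma_1 \simeq \sigma_2^{\,w}$ for some $w \in W(M) = N_G(M)/M$. Here both $\sigma_1,\sigma_2$ are irreducible cuspidal representations of $M$ with $(\sigma_1)_{|M^0}$ and $(\sigma_2)_{|M^0}$ sharing the cuspidal constituent $\sigma^0$ up to $N_{G^0}(M^0)/M^0$-conjugacy; by Lemma \ref{lemme:entrelacement_sur_H} applied to $M^0 \lhd M$ (and the fact that $\pi$ determines its cuspidal support in $G^0$ exactly up to conjugacy, so the ambiguity is controlled by the relevant Weyl group), $\sigma_1$ and $\sigma_2$ differ by a character of $M/M^0$ and a Weyl element, and one checks this is exactly a $W(M)$-conjugacy together with the action of $X_0(M)$—but the latter must be trivial because $\pi$ pins down $\sigma_i$ inside $\Ind_P^G(\sigma_i)$ via Frobenius reciprocity. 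The main obstacle is this last step: carefully tracking the group of ambiguities on the $M$-level (the interplay between $W(M)$, $X_0(M) = \Hom(M/M^0,\C^*)$, and the stabilizer data $X_H(\pi)$, $G(\pi)$ of Lemma \ref{lemme:decomposition_restriction}) and showing it collapses to precisely $G$-conjugacy of the pair $(P,\sigma)$; everything else is a formal transfer from the connected case, which is handled in \cite{Goldberg}.
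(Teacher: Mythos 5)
The paper offers no proof of this theorem; it simply cites \cite{Goldberg} (théorème 2.18 and corollaire 3.2), so there is no argument in the paper to compare against. Assessed on its own merits, your sketch contains two gaps.

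In the existence part, after observing that $\sigma^0$ is a quotient of $r_P^G(\pi)_{|M^0}$, you take an irreducible $M$-quotient $\sigma$ of $r_P^G(\pi)$ and assert it is cuspidal because ``one irreducible constituent of $\sigma_{|M^0}$ is cuspidal.'' This does not follow: $r_P^G(\pi)_{|M^0}$ will in general contain non-cuspidal constituents, and an arbitrary irreducible quotient of $r_P^G(\pi)$ has no reason to see $\sigma^0$ at all. You need to choose $\sigma$ correctly. Frobenius reciprocity for $M^0 \lhd M$ turns the $M^0$-surjection $r_P^G(\pi) \twoheadrightarrow \sigma^0$ into a nonzero $M$-map $r_P^G(\pi) \to \Ind_{M^0}^M \sigma^0$; since $\Ind_{M^0}^M \sigma^0$ is semisimple of finite length, take $\sigma$ to be an irreducible summand hit by this map. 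That $\sigma$ is cuspidal then follows from \cite{Goldberg} lemme 1.1. With this fix, the existence argument is sound. (Also, the appeal to \eqref{eq:decomp_induite} to decompose $(\Ind_{P_i}^G\sigma_i)_{|G^0}$ is not the right reference: that equation concerns $(\Ind_{G^0}^G\rho)_{|G^0}$, whereas here one needs Mackey theory over the double cosets $P_i\backslash G / G^0$, and in general $P_iG^0 \neq G$.)

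The uniqueness argument is not actually carried out. Your reduction to $P_1 = P_2 = P$ and $M_1 = M_2 = M$ is fine, and lemme \ref{lemme:entrelacement_sur_H} applied to $M^0 \lhd M$ does show that $\sigma_1$ and $\sigma_2$ (having a common cuspidal constituent on $M^0$ up to Weyl conjugacy) differ by an $X_0(M)$-twist and a Weyl element. But the closing claim — that the $X_0(M)$-twist ``must be trivial because $\pi$ pins down $\sigma_i$ inside $\Ind_P^G(\sigma_i)$ via Frobenius reciprocity'' — is an assertion, not a proof, and you yourself flag it as ``the main obstacle.'' Note that since $M/M^0 \hookrightarrow G/G^0$ with $G/G^0$ finite abelian, every $\chi \in X_0(M)$ extends to $G/G^0$, and it is a priori possible to have $\chi\pi \simeq \pi$ with $\chi\sigma_1$ not obviously $N_G(M)$-conjugate to $\sigma_1$. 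Ruling this out is exactly the content of \cite{Goldberg} corollaire 3.2, which goes through a concrete analysis of intertwining between the induced representations, not a formal transfer. So the scaffolding of your sketch is right, but the hard half of the uniqueness statement is genuinely missing.
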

\begin{proof}
	Voir \cite{Goldberg} théorème 2.18, corollaire 3.2.
\end{proof}

Soit $\sigma \in \Rep(M)$ une représentation irréductible de $M$, alors $\sigma$ est cuspidale si et seulement si $\sigma_{|M^0}$, sa restriction à $M^0$, l'est. D'après un théorème de Harish-Chandra, c'est encore équivalent à dire que $\sigma_{|M^1}$ est compacte. On vérifie alors aisément que le groupe $X(M) = \Hom(M/M^1, \C^*)$ des caractères non ramifiés de $M$ agit par torsion sur les représentations irréductibles cuspidales de $M$. Si $\sigma \in \Rep(M)$ est une représentation de $M$, on peut réaliser tous les $\pi_{\chi} = \Ind_P^G(\chi \sigma)$ dans un même espace vectoriel par découpage en classes modulo $G^0$, puis en restreignant à un bon sous groupe compact maximal de $G^0$ (c'est à dire qu'à $f$ on associe $(f_{|sK_0})_{s \in S}$). Cela permet de définir la notion de régularité (polynomialité, rationnalité, holomorphie, méromorphie par rapport aux paramètres complexes du caractère $\chi$) comme dans le cas connexe\,\footnote{On effectue un découpage car en général il n'existe pas de bon sous-groupe ouvert compact maximal $K_0$ dans $G$ (au sens où rien ne garantit que $G = P K_0$). Si l'on dispose d'un tel $K_0$ (par exemple si $G$ est le produit semi-direct de $G^0$ par un groupe fini), alors on peut directement adapter la méthode précédente. La notion de régularité ne dépend pas de la méthode choisie.}. Plus précisément, si on note $V_{\chi}$ l'espace de fonctions associé à $\Ind_P^G(\chi \sigma)$, et si $f \in V_{\chi}$, alors l'image du morphisme injectif $f \mapsto (f_{|sK_0})_{s \in S}$ est un espace vectoriel qui ne dépend pas de $\chi$. On identifie tous les $V_{\chi}$ à cet unique espace vectoriel pour parler de régularité.



\subsection{Opérateurs d'entrelacements} 
\label{sub:operateurs_d_entrelacements}

Si $P = M N$, $P' = M N'$ sont des groupes paraboliques cuspidaux de même composante de Levi, $\sigma \in \Rep(M)$ une représentation irréductible cuspidale de $M$ et $\chi \in X(M)$ un caractère non ramifié de $M$, on définit, sous réserve de convergence, un entrelacement $J_{P| P'}(\sigma)$ entre $\pi = \Ind_P^G(\sigma)$ et $\pi' = \Ind_{P'}^G(\sigma)$ par
\[
	J_{P| P'}(\sigma) = \int_{N / N \cap N'} \lambda(n) \, \D n 
\]
où $\lambda(n) : V_{\pi} \to V_{\pi'}$ est la translation de $n$ à gauche.

On peut aussi construire cet entrelacement à partir de ceux définis sur $G^0$ (ce qui permet de déduire facilement leurs propriétés). La restriction $\sigma_{|M^0}$ est semi-simple de longueur finie (puisque $M^0$ est distingué d'indice fini dans $M$). Soit $\sigma_0 \in \Rep(M^0)$ une représentation irréductible qui apparaît comme facteur direct de $\sigma_{|M^0}$. Par réciprocité de Frobenius\,\footnote{On utilise la réciprocité de Frobenius dans les deux sens (l'induction et l'induction compacte de $M^0$ à $M$ coïncident).}, $\sigma$ est un facteur direct de $\Ind_{M^0}^M \sigma_0$. Et par induction, on en déduit finalement que $\pi = \Ind_P^G(\sigma)$ est un facteur direct de $\pi_0 = \Ind_{P^0}^G(\sigma_0)$. De même, $\pi' = \Ind_{P'}^G(\sigma)$ est un facteur direct de $\pi'_0 = \Ind_{P'^0}^G(\sigma_0)$. On a donc des $G$-entrelacements $i : \pi \to \pi_0$ et $s : \pi_0 \to \pi$ (respectivement injectif et surjectif) tels que $s \circ i = \Id_{\pi}$, et la même chose entre $\pi'_0$ et $\pi'$. Du $G^0$-entrelacement

\[
	J_{P^0 | P'^0}(\sigma_0) : \Ind_{P^0}^{G^0}(\sigma_0) \longrightarrow \Ind_{P'^0}^{G^0}(\sigma_0)
\]
on tire par induction un $G$-entrelacement
\[
	\Ind_{G^0}^G \pars{J_{P^0 | P'^0}(\sigma_0)} : \pi_0 \longrightarrow \pi'_0
\]
Et on vérifie avec la formule intégrale que le diagramme suivant commute (voir \cite{Goldberg} lemma 5.5)
\[
	\xymatrix{
		\pi_0 \ar[rrr]^{\Ind_{G^0}^G \pars{J_{P^0 | P'^0}(\sigma_0)}} &&& \pi_0'  \\
		\pi \ar@{^{(}->}[u]^{i} \ar[rrr]_{J_{P | P'}(\sigma)} &&& \pi' \ar@{^{(}->}[u]_{i'}
	}
\]
puis que 
\[
	\xymatrix{
		\pi_0 \ar[rrr]^{\Ind_{G^0}^G \pars{J_{P^0 | P'^0}(\sigma_0)}} &&& \pi_0' \ar@{->>}[d]^{s'} \\
		\pi \ar@{^{(}->}[u]^{i} \ar[rrr]_{J_{P | P'}(\sigma)} &&& \pi'
	}
\]
On peut rétrospectivement utiliser ce dernier diagramme comme définition de $J_{P | P'}(\sigma)$ dès lors que $J_{P^0 | P'^0}(\sigma_0)$ est défini ce qui prolonge sa définition hors du domaine de convergence de l'intégrale.

\section{Représentations d'une groupe réductif non connexe} 
\label{sec:representations_d_une_groupe_reductif_non_connexe}

\subsection{Le lemme géométrique} 
\label{sub:le_lemme_geometrique}

On fixe $A_0$ un tore maximal déployé de $G$, et on pose $M_0 = C_G(A_0)$. Comme le tore $A_0$ est maximal, alors $M_0$ est un sous-groupe de Levi cuspidal (puisque $A_0$ est spécial au sens de la définition \ref{def:tore_special}) minimal, et sa composante neutre $M_0^0$ est un sous-groupe de Levi minimal de $G^0$ (d'après la remarque \ref{rem:croissance_composante_connexe}). On fixe $P_0 = M_0 N_0$ un groupe parabolique cuspidal minimal de $G$ (qui se trouve au dessus de $P^0_0$ un sous-groupe parabolique minimal de $G^0$). On dira qu'un sous-groupe parabolique cuspidal $P$ de $G$ est semi-standard si $A_0 \subset P$ (ce qui équivaut à dire que $P^0$ est semi-standard) et standard si en plus on a $P_0 \subset P$ (ce qui équivaut à dire que $P^0$ est standard).

\begin{lemme}
	\label{lemme:nombre_fini_orbites}
	Soient $P, Q \subset G$ des sous-groupes paraboliques cuspidaux de $G$, Alors $Q^0 \setminus G / P^0$ et $Q \setminus G / P$ sont finis.
\end{lemme}
\begin{proof}
	On choisit $S \in G$ un système de représentants de $G/G^0$. Alors $S$ est fini, et on peut écrire
	\[
		G = \bigsqcup_{s \in S} s \, G^0
	\]
	Pour $s \in S$, les groupes $s^{-1} Q^0 s$ et $P^0$ sont des sous-groupes paraboliques de $G^0$, il existe un ensemble fini $W_s \subset G^0$ tel que l'on a
	\[
		G^0 = \bigsqcup_{w \in W_s} (s^{-1} Q^0 s) \, w \, P^0
	\]
	En combinant les deux décompositions, on tire
	\[
		G = \bigsqcup_{s \in S} \bigsqcup_{w \in W_s} Q^0 \, s w \, P^0
	\]
	Donc $Q^0 \setminus G /P^0$ est fini, et a fortiori $Q \setminus G / P$ l'est aussi.
\end{proof}
\begin{lemme}
	\label{lemme:decomposabilite}
	Soient $P = M U$ et $Q = N V$ des sous-groupes paraboliques cuspidaux semi-standards de $G$, alors
	
	\[
		M \cap Q = (M \cap N) . (M \cap V), \quad \quad U \cap Q = (U \cap N) . (U \cap V),  \quad \quad P \cap Q = (P \cap N) . (P \cap V)
	\]
\end{lemme}
\begin{proof}
	Voir \cite{Goldberg} lemme 3.11.
\end{proof}
	Soient $P, Q$ des sous-groupes paraboliques cuspidaux. On considère l'espace totalement discontinu $X = P \setminus G$ sur lequel $Q$ agit par translation à droite (c'est-à-dire $\rho(q).(Pg) = (Pg q^{-1})$). On sait d'après le lemme \ref{lemme:nombre_fini_orbites} que $Q$ a un nombre fini d'orbites sur $X$, on choisit une numérotation $Z_i = P w_i Q$ des doubles classes telles que les ensembles
		
	\[
		Y_j =\bigcup_{i \le j} Z_i
	\]
	soient ouverts dans $X$ (ce qui est possible en vertu de \cite{Zelevinskii} 1.5). On note
	\[
		M_i = M \cap w_i^{-1}(N), \quad N_i = w_i(M_i) = w_i(M) \cap N, \quad V_i = M \cap w_i^{-1}(V), \quad U_i = N \cap w_i(U)
	\]
	Et
	\[
		P_i = M_i U_i, \quad Q_i = N_i V_i
	\]
	
	On définit le foncteur $F = r_Q^G \circ i_P^G$. On peut appliquer le lemme géométrique \cite{BZ} théorème 5.2.
\begin{lemme}[Lemme géométrique, \cite{BZ} théorème 5.2]
	Le foncteur $F = r_Q^G \circ i_P^G$ admet une filtration par des sous foncteurs $0 = F_0 \subset F_1 \subset \ldots \subset F_k = F$ tels que $F_i/F_{i-1} \simeq i_{P_i}^{G} \circ w_i \circ r_{Q_i}^{M}$.
\end{lemme}

\begin{proof}
	La preuve de ce résultat est donnée dans \cite{BZ} théorème 5.2 (que l'on peut appliquer en vertu des lemmes \ref{lemme:nombre_fini_orbites} et \ref{lemme:decomposabilite}). Rappelons juste comment sont définis les foncteurs $F_i$. 
	Soit $i \in \ints{1,k}$. Soit $(\sigma, V) \in \Rep(M)$ une représentation de $M$. On note $\pi = i_P^G \sigma$, $W = i_P^G V$, et $W_i \subset W$ le sous-espace de $W$ des fonctions à support dans $Y_i$. Ce sous-espace est $Q$-stable, soit $\pi_i$ la sous-représentation de $\pi_{|Q}$ associée. On définit alors $F_i(\sigma) = j_V(\pi_i)$ (où $j_V$ est le foncteur des co-invariants, c'est-à-dire que $r_Q^G(\pi) = j_V(\pi_{|Q})$). Alors les $F_i$ définissent des foncteurs et une filtration de $F$.
	 %
\end{proof}


\subsection{Lemme de Jacquet} 
\label{sub:lemme_de_jacquet}

Si $K \subset G$ est un sous-groupe ouvert compact, et $H \subset G$ un sous-groupe fermé de $G$, on note $K_H = K \cap H$. On sait d'après le théorème de Bruhat que $G^0$ admet des sous-groupes ouverts compacts $K$ arbitrairement petits admettant une décomposition d'Iwahori selon les sous-groupes paraboliques standard de $G^0$, c'est-à-dire que  pour tous sous-groupe $P^0 = M^0 N$ parabolique standard de $G^0$, on a $K =  K_{\overline{N}} . K_{M^0} . K_N$. Comme $K \subset G^0$, on a $K_M = K_{M^0}$. Pour tout $P = MN$ sous-groupe parabolique cuspidal standard de $G$, on peut donc écrire  $K =  K_{\overline{N}} . K_{M} . K_N$. On dira qu'un tel sous-groupe est en \textbf{de type Iwahori et bonne position} par rapport à $(P,M)$.

\begin{lemme}[Jacquet]
	Soit $(\pi, V) \in \Rep(G)$ une représentation lisse de $G$, $P = MN$ un sous-groupe parabolique cuspidal de $G$, et $K \subset G^0$ un sous-groupe ouvert compact de type Iwahori et en bonne position par rapport à $(P,M)$. Alors l'application
	\[
		p : V^K \to J_N(V)^{K_M}
	\]
	Est surjective. De plus, $p$ possède une section naturelle, ou en d'autres termes, $J_N(V)^{K_M}$ peut-être réalisé comme un facteur direct de $V^K$ d'une manière fonctorielle en $V$.
\end{lemme}
\begin{proof}
	Puisque $K \subset G^0$, les $K$-fixes de $(\pi, V)$ sont ceux de $(\pi_{|G^0}, V)$ et on peut directement invoquer le lemme de Jacquet dans $G^0$ (voir \cite{Bernstein} p.65).
\end{proof}


\section{Le théorème} 
\label{sec:le_theoreme}


Passons maintenant à la démonstration du théorème \ref{thm:paley_wiener}. Expliquons succinctement la démarche. On se donne $\varphi$ vérifiant les conditions de l'énoncé. On rappelle que l'on note $\cat$ la sous-catégorie pleine de $\Rep(G)$ stable par somme et sous-quotient engendrée par les représentations $\Ind_{P}^{G} \sigma$ (où $P = M N$ parcourt les sous-groupes paraboliques cuspidaux de $G$, et $\sigma$ les représentations irréductibles cuspidales de $M$), et $\cat^0$ la catégorie correspondante pour $G^0$. Notons $\efo(\cat)$ l'algèbre des endomorphismes du foncteur d'oubli $F_{\cat} : \cat \to \C \textnormal{-Vect}$\,\footnote{C'est-à-dire qu'un élément $\varphi \in \efo(\cat)$ est la donnée pour tout $\pi \in \cat$ d'un endomorphisme $\varphi(\pi) \in \End_{\C}(\pi)$ tel que pour tout $\pi_1, \pi_2 \in \cat$ et $\alpha \in \Hom_{\cat}(\pi_1, \pi_2)$, on a $\alpha \circ \varphi(\pi_1) = \varphi(\pi_2) \circ \alpha$.}, et $\efo^{\infty}(\cat)$ la sous-algèbre des éléments $\varphi$ (dit lisses) tels qu'il existe un sous-groupe ouvert compact $K$ tel que $\widehat{e_K} \varphi = \varphi \widehat{e_K} = \varphi$. On adopte des notations identiques pour $G^0$.

On cherche $f \in \Hecke(G)$ tel que $\varphi = \widehat{f}$. Notre démarche va consister à construire (sans avoir recours à $f$) un élément $\varphi_{|G^0} \in \efo(G^0)$ tel que si $\varphi = \widehat{f}$ alors $\varphi_{|G^0} = \widehat{f_{|G^0}}$. Il s'agira alors montrer que l'objet $\varphi_{|G^0}$ ainsi construit est redevable des hypothèses de la version connexe du théorème de Paley-Wiener (théorème \ref{thm:paley_wiener_connexe}). Enfin, pour terminer la démonstration, on applique cette démarche aux translatés de $\varphi$ permettant de recouvrer $f_{|gG^0}$ pour tout $g \in G^0$.

\subsection{Construction de $\varphi(\Ind_{P^0}^G(\sigma_0))$} 
\label{sub:construction_de_varphi_p^0_g_sigma_0_}

On se donne $P^0 = M^0 N$ est un sous-groupe parabolique de $G^0$, et $\sigma_0 \in \Rep(M^0)$ une représentation irréductible cuspidale de $M^0$, on cherche à définir $\varphi(\Ind_{P^0}^G(\sigma_0'))$ pour tout $\sigma_0'$ dans l'orbite de $\sigma_0$. Pour cela, on se donne une décomposition en sous-espaces irréductibles
\[
	\Ind_{M^0}^M(\sigma_0) = \bigoplus_{i = 1}^n \sigma_i
\]
Notons qu'il s'agit ici d'un choix non canonique, et que l'on ne suppose pas que les $\sigma_i$ sont non isomorphes entre eux. En induisant, on a donc (en identifiant $\Ind_{P^0}^G(\sigma_0)$ et $\Ind_{P}^G \Ind_{M^0}^{M}(\sigma_0)$)
\[
	\Ind_{P^0}^G(\sigma_0) = \bigoplus_{i = 1}^n \Ind_{P}^G (\sigma_i)
\]
Or d'après \cite{Goldberg} lemme 2.15, les représentations $\sigma_i \in \Rep(M)$ qui apparaissent sont toutes cuspidales. Ce qui permet d'étendre $\varphi$ par la formule\,\footnote{Pour être rigoureux il faudrait prendre en compte l'identification entre $\Ind_{P^0}^G(\sigma_0)$ et $\Ind_{P}^G \Ind_{M}^{M^0}(\sigma_0)$ ici aussi. Nous l'avons omis pour alléger la formule.}
\[
	\varphi(\Ind_{P^0}^G(\sigma_0)) = \bigoplus_{i = 1}^n \varphi(\Ind_{P}^G (\sigma_i))
\]
On vérifie que comme les $\varphi(\Ind_P^G \sigma_i)$ commutent aux entrelacements $\lambda(g)$ et $J_{P|P'}(\sigma_i)$, alors les $\varphi(\Ind_{P^0}^G(\sigma_0))$ commutent aux entrelacements $\lambda(g)$ et $\Ind_{G^0}^G(J_{P^0, P^{0'}}(\sigma_0))$ (parce qu'on a la décomposition $\Ind_{G^0}^G(J_{P^0, P^{0'}}(\sigma_0)) = \bigoplus_{i= 1}^n J_{P|P'}(\sigma_i)$, et une décomposition similaire pour l'opérateur $\lambda$).

Par ailleurs, si l'on tord $\sigma_0$ par un caractère non ramifié $\chi$ de $M^0$, on trouve la même décomposition où les $\sigma_i$ sont tordues par le même caractère (prolongé à $M$). On prolonge donc notre définition sur toute l'orbite par
\[
	\varphi(\Ind_{P^0}^G(\chi \otimes \sigma_0)) = \bigoplus_{i = 1}^n \varphi(\Ind_{P}^G (\tilde{\chi} \otimes \sigma_i))
\]

Il apparaît que la fonction $\chi \longmapsto \varphi(\Ind_{P^0}^G(\chi \otimes \sigma_0))$ est polynomiale.


\subsection{Restriction à $G^0$} 
\label{sub:restriction_a_g_0_}

On définit maintenant un objet $\varphi_{|G^0}$ sur les représentations de $G^0$ en se basant sur la définition \ref{defn:restriction_a_H}. L'idée de la définition est que si $\varphi = \widehat{f}$ pour $f \in \Hecke(G)$, alors on a $\varphi_{|G^0} = \widehat{f_{|G^0}}$ (cf lemme \ref{lemme:restriction_commute_chapeau}). Il suffira alors de montrer que $\varphi_{|G^0}$ est redevable des hypothèses du théorème \ref{thm:paley_wiener_connexe} dans le cas connexe.
On se donne $S$ un système de représentants de $G/G^0$. La représentation $\Ind_{P^0}^{G^0}(\sigma_0^s)$ est un facteur direct de $\Ind_{P^0}^G(\sigma_0)_{|G^0}$. Une $G^0$-surjection est fournie par la restriction à $G^0$, définie par

\begin{align*}
	e_s : \Ind_{P^0}^G(\sigma_0) &\longrightarrow \pars{\Ind_{P^0}^{G^0}\sigma_0}^s\\
	f &\longmapsto (g_0 \mapsto f(g_0 s))
\end{align*}
tandis qu'une $G^0$-injection est fournie par l'extension par $0$, définie par
\begin{align*}
	e_s^* : \pars{\Ind_{P^0}^{G^0}\sigma_0}^s &\longrightarrow \Ind_{P^0}^{G}(\sigma_0)\\
	f &\longmapsto \left(g \mapsto \begin{cases}
		f(g s^{-1}) & \textrm{si }g \in sG^0 \\
		0 & \textrm{sinon}
	\end{cases}\right)
\end{align*}
On a $e_s \circ e_s^* = \Id_{\pars{\Ind_{P^0}^{G^0}\sigma_0}^s}$ et $p_s = e_s^* \circ e_s$ est un $G^0$-projecteur de $\Ind_{P^0}^{G}(\sigma_0)$ (la projection sur les fonctions à support dans $sG^0$). On définit alors $\varphi_{|G^0}$ par

\[
	\varphi_{|G^0}(\Ind_{P^0}^{G^0} \, \chi\sigma_0) = e_1 \circ \varphi(\Ind_{P^0}^{G}(\chi\sigma_0)) \circ e_1^*
\]
Il est facile de vérifier que $\varphi_{|G^0}$ satisfait les conditions de lissité et de polynomialité. Il reste à vérifier les conditions de commutations aux entrelacements nécessaires à l'application de \cite{Heiermann} théorème 0.1. La commutation aux translations à gauche vient du fait que si $g_0 \in G^0$ l'opérateur $\lambda(g_0) : \Ind_{P^0}^{G}(\sigma_0) \to \Ind_{g \cdot P^0}^{G}(g \cdot \sigma_0)$ induit par restriction la translation à gauche de $\Ind_{P^0}^{G^0}(\sigma_0)$ vers $\Ind_{g \cdot P^0}^{G}(g \cdot \sigma_0)$ et que les $\varphi(\Ind_{P^0}^G(\sigma_0))$ commutent aux translations à gauche. De même, la vérification de la commutation aux entrelacements $J_{P^0, P^{0'}}(\sigma_0)$ se fait en utilisant une propriété similaire des opérateurs $\Ind_{G^0}^G(J_{P^0, P^{0'}}(\sigma_0))$.
D'après \cite{Heiermann} théorème 0.1, il existe donc $f_1 \in \Hecke(G^0)$ telle que $\varphi_{|G^0} = \widehat{f_1}$.

L'idée pour conclure est maintenant d'appliquer ce raisonnement à $(\widehat{\delta}_{s^{-1}} \varphi)_{s \in S}$ (où $\widehat{\delta}_{g} \in \efo(\cat)$ désigne l'élément $\pi \mapsto \pi(g)$). Pour chaque $s \in S$ on trouve une fonction $f_s \in \Hecke(G^0)$ telle que $(\widehat{\delta}_{s^{-1}} \varphi)_{|G^0} = \widehat{f_s}$. On définit alors $f \in \Hecke(G)$ par

\[
	f = \sum_{s \in S} \delta_s * f_s
\]
Un calcul (voir lemme \ref{lemme:decomposition}) montre que $\widehat{f}$ et $\varphi$ coïncident sur les représentations du type $\Ind_{P^0}^{G}(\sigma_0)$, puis par identification des termes diagonaux, $\widehat{f}$ et $\varphi$ coïncident sur les représentations du type $\Ind_{P}^{G}(\sigma_0)$.  
%
Ce qui prouve le théorème \ref{thm:paley_wiener}.

\subsection{Une relation polynomiale} 
\label{sub:une_relation_polynomiale}

La preuve du théorème 0.1 de \cite{Heiermann} fournit en outre une formule d'inversion (en fait, la preuve du théorème repose sur cette formule). Nous allons maintenant donner une formule analogue dans le cas non connexe. Pour cela, on va démontrer un résultat analogue à la proposition 0.2 de \cite{Heiermann}. En gardant les notations du paragraphe précédent, on fixe $s \in S$ et on pose
\[
	\phi_s(\Ind_{P^0}^{G^0} \, \sigma_0) = e_1 \circ \Ind_{P^0}^{G}(\sigma_0)(s^{-1}) \circ \varphi(\Ind_{P^0}^{G} \, \sigma_0) \circ e_1^* = e_{s^{-1}} \circ \varphi(\Ind_{P^0}^{G}(\sigma_0)) \circ e_1^*
\]
Alors d'après \cite{Heiermann} proposition 0.2, il existe une application $\xi_{s}(\Ind_{P^0}^{G^0}\sigma_0) : \Ind_{P^0}^{G^0} \sigma_0 \to \Ind_{\overline{P^0}}^{G^0} \sigma_0$, polynomiale en $\sigma_0$ telle que pour tout $\sigma_0 \in \Orb_0$
\[
	\phi_s(\Ind_{P^0}^{G^0} \, \sigma_0) = \sum_{w \in W(M^0, \Orb_{0})}
	 J_{P^0 | w \overline{P^0}}(\sigma_0) \circ \lambda(w) \circ
	\xi_{s}(\Ind_{P^0}^{G^0}w^{-1}\sigma_0)
	\circ \lambda(w)^{-1} \circ J_{wP^0 | P^0}(\sigma_0)
\]
Où on a noté $W(M^0, \Orb_{0}) = \braces{w \in W(M^0), \ w \Orb_{0} = \Orb_{0}}$. On note $\zeta(\Ind_{P^0}^{G^0}\, \sigma_0) = J_{\overline{P^0} | P^0}(\sigma_0)^{-1} \ \xi(\Ind_{P^0}^{G^0} \, \sigma_0)$. La fonction $\sigma_0 \mapsto \zeta(\Ind_{P^0}^{G^0}\, \sigma_0)$ est rationnelle (ses pôles sont les points non-inversibilité de $J_{\overline{P^0} | P^0}(\sigma_0)$).

\subsubsection{Sur les représentations du type $\Ind_{P^0}^G \, \sigma_0$} 
\label{ssub:sur_les_representations_du_type_p^0_g_,_sigma_0_}
On en déduit un résultat analogue pour les représentations du type $\Ind_{P^0}^G(\sigma_0)$.

\begin{lemme}
	\label{lemme:0.2s_Ind_P0}
	Il existe une application polynomiale $\xi_{s}(\Ind_{P^0}^{G}\sigma_0) : \Ind_{P^0}^{G} \, \sigma_0 \to \Ind_{\overline{P^0}}^{G} \, \sigma_0$, polynomiale en $\sigma_0$ telle que pour tout $\sigma_0 \in \Orb_0$, on ait
	\[
		\varphi_s(\Ind_{P^0}^{G} \, \sigma_0) = \sum_{w \in W(M^0, \Orb_{0})}
		 J_{P^0 | w \overline{P^0}}^G(\sigma_0) \circ \lambda(w) \circ
		\xi_{s}(\Ind_{P^0}^{G}w^{-1}\sigma_0)
		\circ \lambda(w)^{-1} \circ J_{wP^0 | P^0}^G(\sigma_0)
	\]
\end{lemme}
\begin{proof}
	On a un $G^0$-isomorphisme
	\begin{equation}
		\label{eq:decomp}
		(\Ind_{P^0}^{G} \sigma_0)_{|G^0} \underset{\alpha_{P^0}}{\simeq} \bigoplus_{t \in S} \Ind_{t \cdot P^0}^{G^0} \, t \cdot \sigma_0
	\end{equation}
	donné par
	\begin{align*}
		\alpha_{P^0} : \Ind_{P^0}^{G} V_{\sigma_0} & \longrightarrow \bigoplus_{t \in S} \Ind_{t \cdot P^0}^{G^0} \, tV_{\sigma_0} \\
		f & \longmapsto \bigoplus_{t \in S} f_t
	\end{align*}
	Où $f_t$ est définie pour $g_0 \in G^0$ par $f_t(g_0) = f(t^{-1} g_0)$, et $\alpha_{P^0}^{-1}(\Ind_{t \cdot P^0}^{G^0} \, tV_{\sigma_0})$ correspond au sous-espace de $\Ind_{P^0}^{G} V_{\sigma_0}$ des fonctions supportées dans $tG^0$.	L'isomorphisme \ref{eq:decomp} est également valable pour $(\Ind_{\overline{P^0}}^{G} \sigma_0)_{|G^0}$ (en changeant $P^0$ en $\overline{P^0}$).
	On définit $\xi_s(\Ind_{P^0}^{G} \sigma_0) : \Ind_{P^0}^{G} \sigma_0 \to \Ind_{\overline{P^0}}^{G} \sigma_0$ diagonalement vis-a-vis de la décomposition (\ref{eq:decomp}) 
	par la formule
	\begin{equation}
		\xi_s(\Ind_{P^0}^{G} \sigma_0) = \alpha_{\overline{P^0}}^{-1} \circ \pars{\bigoplus_{t \in S} \xi_s (\Ind_{t \cdot P^0}^{G^0} \, t \cdot \sigma_0)} \circ \alpha_{P^0}
	\end{equation}
	Il est clair par construction que $\sigma_0 \mapsto \xi_s(\Ind_{P^0}^{G} \sigma_0)$ est polynomiale (puisqu'elle l'est sur chaque bloc de la diagonale). Il reste à vérifier la relation annoncée. Comme $\varphi_s$ est à support dans $G^0$, il agit diagonalement (toujours par rapport à la décomposition \ref{eq:decomp}), c'est-à-dire précisément que l'on a
	\begin{align*}
		\alpha_{\overline{P^0}} \circ \varphi_s(\Ind_{P^0}^{G} \, \sigma_0)  \circ \alpha_{P^0}^{-1} &= \bigoplus_{t \in S} \phi_s \pars{\Ind_{t \cdot P^0}^{G^0} \, t \cdot \sigma_0}\\
		&= \bigoplus_{t \in S} \sum_{w_t \in W(t \cdot M^0, t \cdot \Orb_{0})}
		 J_{t \cdot P^0 | w_t t \cdot \overline{P^0}}(t \cdot \sigma_0) \circ \lambda(w_t) \circ
		\xi_{s} \pars{\Ind_{t \cdot P^0}^{G^0} w_t^{-1} \, t \cdot \sigma_0}
		\circ \lambda(w_t)^{-1} \circ J_{w_t t \cdot P^0 | t \cdot P^0}(t \cdot \sigma_0)
	\end{align*}
	Or $w_t \in W(t \cdot M^0, t \cdot \Orb_{0})$ si et seulement si $t^{-1} w_t t \in W(M^0, \Orb_{0})$. Par changement de variable $w = t^{-1} w_t t$, on tire donc
	\begin{align*}
		\alpha_{\overline{P^0}} \circ \varphi_s(\Ind_{P^0}^{G} \, \sigma_0) \circ \alpha_{P^0}^{-1} &= \bigoplus_{t \in S} \sum_{w \in W(M^0, \Orb_{\sigma_0})}
		 J_{t \cdot P^0 | t w\overline{P^0}}(t \cdot \sigma_0) \circ \lambda(t w t^{-1}) \circ
		\xi_{s} \pars{\Ind_{t \cdot P^0}^{G^0} \, t w^{-1} \sigma_0}
		\circ \lambda(t w^{-1} t^{-1}) \circ J_{t w P^0 | t \cdot P^0}(t \cdot \sigma_0) \\
		&= \sum_{w \in W(M^0, \Orb_{\sigma_0})}  \bigoplus_{t \in S}
		 J_{t \cdot P^0 | t w\overline{P^0}}(t \cdot \sigma_0) \circ \lambda(t w t^{-1}) \circ
		\xi_{s} \pars{\Ind_{t \cdot P^0}^{G^0} \, t w^{-1} \sigma_0}
		\circ \lambda(t w^{-1} t^{-1}) \circ J_{t w P^0 | t \cdot P^0}(t \cdot \sigma_0)
	\end{align*}
	Et grâce aux relations (toujours selon la décomposition  \ref{eq:decomp})
	\[
		\alpha_{Q^0} \circ J_{P^0 | Q^0}^G(\sigma_0) \circ \alpha_{P^0}^{-1} = \bigoplus_{t \in S} J_{t \cdot P^0 | t Q^0}(t \cdot \sigma_0) \quad \quad \quad \text{ et } \quad \quad \quad \alpha_{\overline{P^0}} \circ \lambda(w) \circ \alpha_{P^0}^{-1} = \bigoplus_{t \in S} \lambda(t w t^{-1})
	\]
	on conclut que
	\[
		\varphi_s(\Ind_{P^0}^{G} \, \sigma_0) = \sum_{w \in W(M^0, \Orb_{\sigma_0})} J_{P^0 | w \overline{P^0}}^G(\sigma_0) \circ \lambda(w) \circ
		\xi_{s}(\Ind_{P^0}^{G}w^{-1}\sigma_0)
		\circ \lambda(w)^{-1} \circ J_{wP^0 | P^0}^G(\sigma_0)
	\]
\end{proof}

Etant donnée une application $\xi$ comme dans le lemme \ref{lemme:0.2s_Ind_P0}, on posera $\zeta_s(\Ind_{P^0}^{G}\, \sigma_0) = J_{\overline{P^0} | P^0}^G(\sigma_0)^{-1} \ \xi_s(\Ind_{P^0}^{G} \, \sigma_0)$. La fonction $\sigma_0 \mapsto \zeta(\Ind_{P^0}^{G}\, \sigma_0)$ est rationnelle (ses pôles sont les zéros de $J_{\overline{P^0} | P^0}^G(\sigma_0)$). On a alors la relation
\[
	\zeta_s(\Ind_{P^0}^{G}\, \sigma_0) = \bigoplus_{t \in S} \zeta_s \pars{\Ind_{t \cdot P^0}^{G^0} \, t \cdot \sigma_0}
\]
En assemblant les morceaux de lemme précédent pour $s \in S$, on déduit le lemme suivant.

\begin{lemme}
	\label{lemme:0.2_Ind_P0}
	Il existe une application $\xi(\Ind_{P^0}^{G}\sigma_0) : \Ind_{P^0}^{G} \, \sigma_0 \to \Ind_{\overline{P^0}}^{G} \, \sigma_0$, polynomiale en $\sigma_0$ telle que pour tout $\sigma_0 \in \Orb_0$, on ait
	\[
		\varphi(\Ind_{P^0}^{G} \, \sigma_0) = \sum_{w \in W(M^0, \Orb_{0})}
		 J_{P^0 | w \overline{P^0}}^G(\sigma_0) \circ \lambda(w) \circ
		\xi(\Ind_{P^0}^{G}w^{-1}\sigma_0)
		\circ \lambda(w)^{-1} \circ J_{wP^0 | P^0}^G(\sigma_0)
	\]
\end{lemme}
\begin{proof}
	Il suffit de poser
	\begin{equation}
		\xi(\Ind_{P^0}^{G}\sigma_0) = \sum_{s \in S} \Ind_{\overline{P^0}}^{G} \, \sigma_0(s) \circ \xi_s(\Ind_{P^0}^{G}\sigma_0)
	\end{equation}
	Il est clair que l'application ainsi définie est polynomiale en $\sigma_0$. Par ailleurs, d'après le lemme \ref{lemme:decomposition}, on a
	\begin{align*}
		\varphi \pars{\Ind_{P^0}^G \sigma_0} &= \sum_{s \in S} \Ind_{P^0}^{G} \, \sigma_0(s) \circ \varphi_s(\Ind_{P^0}^{G}\sigma_0) \\
		&= \sum_{s \in S} \sum_{w \in W(M^0, \Orb_{0})} \Ind_{P^0}^{G} \, \sigma_0(s) \circ J_{P^0 | w \overline{P^0}}^G(\sigma_0) \circ \lambda(w) \circ
		\xi_s(\Ind_{P^0}^{G}w^{-1}\sigma_0)
		\circ \lambda(w)^{-1} \circ J_{wP^0 | P^0}^G(\sigma_0)
	\end{align*}
	Or, comme les $\lambda(w)$ et $J_{P^0 | Q^0}^G(\sigma_0)$ sont des $G$-entrelacements, on tire
	\begin{align*}
		\varphi \pars{\Ind_{P^0}^G \sigma_0} &= \sum_{s \in S} \sum_{w \in W(M^0, \Orb_{0})} J_{P^0 | w \overline{P^0}}^G(\sigma_0) \circ \lambda(w) \circ
		\Ind_{\overline{P^0}}^{G} \, (w^{-1}\sigma_0)(s) \circ \xi_s(\Ind_{P^0}^{G}w^{-1}\sigma_0)
		\circ \lambda(w)^{-1} \circ J_{wP^0 | P^0}^G(\sigma_0) \\
		&= \sum_{w \in W(M^0, \Orb_{0})}
		 J_{P^0 | w \overline{P^0}}^G(\sigma_0) \circ \lambda(w) \circ
		\xi(\Ind_{P^0}^{G}w^{-1}\sigma_0)
		\circ \lambda(w)^{-1} \circ J_{wP^0 | P^0}^G(\sigma_0)
	\end{align*}
\end{proof}

Etant donnée une application $\xi$ comme dans le lemme \ref{lemme:0.2_Ind_P0}, on posera $\zeta(\Ind_{P^0}^{G}\, \sigma_0) = J_{\overline{P^0} | P^0}^G(\sigma_0)^{-1} \ \xi(\Ind_{P^0}^{G} \, \sigma_0)$. La fonction $\sigma \mapsto \zeta(\Ind_{P}^{G}\, \sigma)$ est rationnelle (ses pôles sont les zéros de $J_{\overline{P} | P}(\sigma)$). On a alors la relation
\[
	\zeta(\Ind_{P^0}^{G}\, \sigma_0) = \sum_{s \in S} \Ind_{P^0}^{G} \, \sigma_0(s) \circ \zeta_s(\Ind_{P^0}^{G}\sigma_0)
\]


\subsubsection{Sur les représentations du type $\Ind_P^G \, \sigma$} 
\label{ssub:sur_les_representations_du_type_ind_p_g_,_sigma_}

\begin{prop}
	\label{prop:0.2_Ind_P}
	Il existe une application $\xi(\Ind_{P}^{G}\sigma) : \Ind_{P}^{G} \sigma \to \Ind_{\overline{P}}^{G} \sigma$, polynomiale en $\sigma$ telle que pour tout $\sigma \in \Orb$, on ait
	\[
		\varphi(\Ind_{P}^{G} \, \sigma) = \sum_{w \in W(M, \Orb)}
		 J_{P | w \overline{P}}(\sigma) \circ \lambda(w) \circ
		\xi(\Ind_{P}^{G}w^{-1}\sigma)
		\circ \lambda(w)^{-1} \circ J_{wP | P}(\sigma)
	\]
	où $W(M, \Orb) = \braces{w \in W(M^0), \ w \Orb = \Orb}$.
\end{prop}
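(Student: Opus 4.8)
L'idée est de déduire cet énoncé du lemme \ref{lemme:0.2_Ind_P0}, qui traite les représentations du type $\Ind_{P^0}^{G}\sigma_0$, en exploitant le fait que $\Ind_{P}^{G}\sigma$ en est un facteur direct. On fixe une composante irréductible $\sigma_0$ de $\sigma_{|M^0}$ et on note $\Orb_0$ son orbite sous $X(M^0)$ ; comme rappelé en \ref{sub:representations_cuspidales} et \ref{sub:operateurs_d_entrelacements}, $\pi := \Ind_{P}^{G}\sigma$ est facteur direct de $\pi_0 := \Ind_{P^0}^{G}\sigma_0$, via des $G$-morphismes $i : \pi \hookrightarrow \pi_0$ et $s : \pi_0 \twoheadrightarrow \pi$ avec $s\circ i = \Id_\pi$, et de même $\bar{\imath}, \bar{s}$ pour les paraboliques opposés ; on choisit ces morphismes uniformément sur l'orbite et compatiblement avec la décomposition $\Ind_{M^0}^{M}\sigma_0 = \bigoplus_i \sigma_i$ de \ref{sub:construction_de_varphi_p^0_g_sigma_0_}. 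Par naturalité de $\varphi$ on a $\varphi(\pi) = s\circ\varphi(\pi_0)\circ i$, et la définition même des $J_{P|Q}(\sigma)$ en \ref{sub:operateurs_d_entrelacements} donne $J_{P|Q}(\sigma) = s_Q\circ J^{G}_{P^0|Q^0}(\sigma_0)\circ i_P$ ainsi que $J^{G}_{P^0|Q^0}(\sigma_0)\circ i_P = i_Q\circ J_{P|Q}(\sigma)$, la translation à gauche $\lambda(w)$ (pour $w\in W(M^0)$, qui normalise aussi $M$ puisqu'il normalise $A_{M^0} = A_M$) satisfaisant des relations analogues vis-à-vis de $i$ et $s$.

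On substitue ensuite la formule du lemme \ref{lemme:0.2_Ind_P0} dans $\varphi(\pi) = s\circ\varphi(\pi_0)\circ i$ : cela exprime $\varphi(\pi)$ comme somme, sur $w\in W(M^0,\Orb_0)$, des termes $s\circ\Phi_w(\sigma_0)\circ i$, où $\Phi_w(\sigma_0)$ désigne le terme correspondant de la formule. Les opérateurs $J^{G}_{\bullet}(\sigma_0)$ opèrent trivialement sur les fibres (ce sont des intégrales de translations à gauche selon des sous-groupes unipotents de $G^0$), donc commutent à l'induite $\Ind_{P}^{G}$ de tout $M$-endomorphisme de $\Ind_{M^0}^{M}\sigma_0$, en particulier aux projecteurs sur les facteurs $\Ind_{P}^{G}\sigma_i$ de $\pi_0$. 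En insérant ces projecteurs et en suivant l'action de $\lambda(w)$ sur la décomposition de Clifford de $\sigma_{|M^0}$, on réorganise la somme : un $w\in W(M^0,\Orb_0)$ dont l'action sur $\Orb$ fixe le facteur $\sigma$ fournit, à multiplicité près, un terme de la forme $J_{P|w\overline{P}}(\sigma)\circ\lambda(w)\circ(\cdots)\circ\lambda(w)^{-1}\circ J_{wP|P}(\sigma)$ indexé par un élément de $W(M,\Orb)$ (et l'on a alors $w^{-1}\sigma\in\Orb$, de sorte que l'application $\xi(\Ind_{P}^{G}-)$ à construire n'est évaluée que sur $\Orb$) ; les $w$ qui permutent $\sigma$ parmi ses conjuguées sous $M/M^0$ donnent des contributions que l'on absorbe dans la définition de $\xi(\Ind_{P}^{G}\sigma)$. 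Le point clef est que ces contributions absorbées demeurent polynomiales, les opérateurs d'entrelacement qui y interviennent se simplifiant contre le facteur $J_{\overline{P}|P}(\sigma)$ extrait de $\xi$ ; on obtient ainsi que $\zeta(\Ind_{P}^{G}\sigma) := J_{\overline{P}|P}(\sigma)^{-1}\,\xi(\Ind_{P}^{G}\sigma)$ est rationnelle, de pôles contenus dans les zéros de $J_{\overline{P}|P}(\sigma)$.

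Le principal obstacle est précisément ce recomptage : relier le sous-groupe $W(M,\Orb)$ de $W(M^0)$, stabilisateur de $\Orb\subset\Rep(M)$, au sous-groupe $W(M^0,\Orb_0)$, stabilisateur de $\Orb_0\subset\Rep(M^0)$ (les deux diffèrent en général), et vérifier que la réorganisation ci-dessus produit bien une application $\xi(\Ind_{P}^{G}\sigma)$ polynomiale en $\sigma$ satisfaisant l'identité annoncée. Ce point acquis, les vérifications restantes (polynomialité de $\xi$, indépendance des choix auxiliaires de $\sigma_0$ et de la décomposition de $\Ind_{M^0}^{M}\sigma_0$, formule pour $\zeta$) sont formelles.
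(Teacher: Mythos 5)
Vous suivez globalement la même stratégie que le texte : écrire $\pi = \Ind_P^G\sigma$ comme facteur direct de $\pi_0 = \Ind_{P^0}^G\sigma_0$ via des morphismes $i$, $s$, utiliser la naturalité $\varphi(\pi) = s\circ\varphi(\pi_0)\circ i$, substituer la formule du lemme \ref{lemme:0.2_Ind_P0}, puis faire passer $i$ et $s$ à travers les $J$ et les $\lambda(w)$. C'est bien le bon chemin.

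Mais la proposition est incomplète, et cette incomplétude n'est pas un simple « détail formel ». D'une part, vous n'écrivez jamais la formule qui définit $\xi(\Ind_P^G\sigma)$ : c'est pourtant le cœur de la démonstration du texte, qui pose simplement
\[
\xi(\Ind_P^G\,\sigma) = \Ind_{\overline{P}}^G\,s_\sigma\;\circ\;\xi(\Ind_{P^0}^G\,\sigma_0)\;\circ\;\Ind_P^G\,i_\sigma,
\]
où $\sigma_0$ est choisi de façon $X(M)$-équivariante sur l'orbite (c'est-à-dire $(\chi\sigma)_0 = \chi_0\sigma_0$ pour $\chi\in X(M)$). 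Cette formule rend la polynomialité immédiate (chaque facteur est polynomial en $\sigma$), et l'identité cherchée en découle en faisant glisser $s_\sigma$ et $i_\sigma$ terme à terme à travers les opérateurs, grâce aux compatibilités $\Ind_Q^G\,i_\sigma\circ J_{P|Q}(\sigma) = J_{P^0|Q^0}^G(\sigma_0)\circ\Ind_P^G\,i_\sigma$, $J_{P|Q}(\sigma)\circ\Ind_P^G\,s_\sigma = \Ind_Q^G\,s_\sigma\circ J_{P^0|Q^0}^G(\sigma_0)$ et leurs analogues pour $\lambda(w)$. Sans cette formule explicite pour $\xi$, votre « absorption » des contributions dans $\xi$ reste un vœu.

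D'autre part, vous désignez vous-même la question centrale — relier $W(M,\Orb)$ à $W(M^0,\Orb_0)$ et vérifier que la réorganisation produit une application polynomiale satisfaisant l'identité — comme « le principal obstacle », puis écrivez « Ce point acquis, les vérifications restantes \ldots\ sont formelles ». Autrement dit vous identifiez la difficulté mais ne la résolvez pas ; c'est une lacune réelle, pas une simple vérification. Le texte contourne ce « recomptage » : il n'y a aucune étape de « réorganisation » ou d'« absorption » de termes permutant $\sigma$ parmi ses conjuguées, car le choix $X(M)$-équivariant de $\sigma_0$ garantit que chaque $w$ produit directement un terme de la forme voulue, avec $\xi$ évalué en $w^{-1}\sigma$. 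L'étape de réindexation que vous annoncez comme cruciale est précisément celle que la définition explicite de $\xi$ rend inutile.
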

\begin{proof}
	Pour tout $\sigma \in \Orb$, on choisit $\sigma_0 \in \Rep(M^0)$ un facteur irréductible de $\sigma_{|M^0}$ de manière $X(M)$-équivariante, c'est-à-dire tel que pour tout $\chi \in X(M)$ et $\sigma \in \Orb$, on a $(\chi\sigma)_0 = \chi_0 \sigma_0$ (en notant $\chi_0 = \chi_{|M^0}$) : il suffit de choisir $\sigma_0$ pour un point fixé de $\sigma \in \Orb$, et comme l'espace $\Hom_{M^0}(\chi_{0} \sigma_0, (\chi\sigma)_{|M^0})$ ne dépend pas de $\chi$, un choix pour tous les autres points en découle. On note $\Orb_0$ l'orbite formée par les $\sigma_0$.
	
	Par réciprocité de Frobenius, $\sigma$ est un facteur direct de $\Ind_{M^0}^M \sigma_0$. On se donne une $M$-injection $i_{\sigma} : \sigma \to \Ind_{M^0}^M \sigma_{0}$ et $M^0$-surjection $s_{\sigma} : \Ind_{M^0}^M \sigma_{0} \to \sigma$ telles que $s_{\sigma} \circ i_{\sigma} = \Id_{\sigma}$. Alors par induction (qui est exacte) on tire une $G$-injection $\Ind_P^G \, i_{\sigma} :  \Ind_P^G \, \sigma \to \Ind_{P^0}^G \, \sigma_{0}$ et une $G$-surjection $\Ind_P^G \, s_{\sigma} : \Ind_{P^0}^G \, \sigma_{0} \to \Ind_{P}^G \, \sigma$. On pose alors
	\begin{equation}
		\xi(\Ind_P^G \, \sigma) = \Ind_{\overline{P}}^G \, s_{\sigma} \circ \xi(\Ind_{P^0}^{G} \, \sigma_0) \circ \Ind_P^G \, i_{\sigma}
	\end{equation}
	Il est clair que la formule définit une application polynomiale en $\sigma$. Par commutation de $\varphi$ aux $G$-entrelacements, on a
	\begin{align*}
		\varphi(\Ind_P^G \, \sigma) &= \Ind_{P}^G \, s_{\sigma} \circ \varphi(\Ind_{P^0}^{G^0} \, \sigma_0) \circ \Ind_P^G \, i_{\sigma}
	\end{align*}
	D'après la section \ref{sub:operateurs_d_entrelacements}, on a les relations
	\[
		\Ind_{Q}^G \, i_{\sigma} \circ J_{P|Q}(\sigma) = J_{P^0|Q^0}^G(\sigma_0) \circ \Ind_P^G \, i_{\sigma} 	\et	  J_{P|Q}(\sigma) \circ \Ind_{P}^G \, s_{\sigma} = \Ind_Q^G \, s_{\sigma} \circ J_{P^0|Q^0}^G(\sigma_0)
	\]
	et comme $\lambda(w) \in \End(\Ind_P^G)$, on vérifie que
	\[
		\lambda(w) \circ \Ind_P^G \, s_{\sigma} = \Ind_{wP}^G \, s_{w\sigma} \circ \lambda(w) 	\et	 \lambda(w) \circ \Ind_P^G \, i_{\sigma} = \Ind_{wP}^G \, i_{w\sigma}
	\]
	En combinant, on tire la relation
	\[
		\varphi_s(\Ind_{P}^{G} \, \sigma) = \sum_{w \in W(M^0, \Orb_0)}
		 J_{P | w \overline{P}}(\sigma) \circ \lambda(w) \circ
		\xi_s(\Ind_{P}^{G}w^{-1}\sigma)
		\circ \lambda(w)^{-1} \circ J_{wP | P}(\sigma)
	\]
\end{proof}
\begin{rem}
	Si on a la décomposition $\pars{\Ind_P^G \, \sigma}_{|G^0} = \bigoplus_{i \in I} \pi_i$, alors les $\pi_i$ sont de la forme $\Ind_{g_i P^0}^{G^0} \, g_i \sigma_0$ pour $g_i \in G$ (en effet, comme $\Ind_P^G \, \sigma \hookrightarrow \Ind_{P^0}^G \, \sigma_0$, on a $\pars{\Ind_P^G \, \sigma}_{|G^0} \hookrightarrow \bigoplus_{g \in G/G^0} \Ind_{g P^0}^{G^0} \, g \sigma_0$). Et poser
	\[
		\xi(\Ind_P^G \, \sigma) = \sum_{s \in S} \Ind_{\overline{P}}^G \, \sigma (s) \bigoplus_{i \in I} \xi_s(\pi_i)
	\]
	Conviendrait.
\end{rem}

Etant donnée une application $\xi$ comme dans le lemme \ref{prop:0.2_Ind_P}, on posera $\zeta(\Ind_{P}^{G}\, \sigma) = J_{\overline{P} | P}^G(\sigma)^{-1} \ \xi(\Ind_{P}^{G} \, \sigma)$. On a alors la relation
\[
	\zeta(\Ind_{P}^{G}\, \sigma) = \Ind_{P}^G \, s_{\sigma} \circ \zeta(\Ind_{P^0}^{G} \, \sigma_0) \circ \Ind_P^G \, i_{\sigma}
\]



\subsection{Une formule d'inversion} 
\label{sub:une_formule_d_inversion}

Dans cette partie on donne une formule d'inversion pour la transformée de Fourier sur $G$. On conserve les notations du paragraphe précédent, et on suppose que l'on dispose d'objets $\varphi$, $\xi$ et $\zeta$ définis comme précédemment. On sait qu'il existe $f \in \Hecke(G)$ telle que $\varphi = \widehat{f}$, nous allons donner une formule pour $f$.

\subsubsection{Notations} 
\label{ssub:notations}

On note $\Theta_0$ l'ensemble des couples $(P^0, \Orb_0)$ où $P^0 = M^0 N$ est un sous-groupe parabolique de $G^0$ et $\Orb_0 = X(M^0). \sigma_0 = \braces{\chi \sigma_0, \chi \in X(M^0)}$ est l'orbite par torsion sous $X(M^0)$ d'une classe d'équivalence de représentation $\sigma_0 \in \Rep(M^0)$ irréductible cuspidale de $M^0$. On note $X(\sigma_0) = \braces{\chi \in X(M^0), \, \chi \sigma_0 \simeq \sigma_0}$ le stabilisateur de $\sigma_0$. C'est un groupe fini (voir \cite{Bernstein} lemme 21) qui ne dépend en fait que de l'orbite $\Orb_0$ (puisque $X(M^0)$ est commutatif). Le groupe $G$ agit sur $\Theta_0$ par conjugaison. On note $\Theta_0/G$ l'ensemble des orbites. Pour $(P^0, \Orb_0) \in \Theta_0$, on note $[P^0, \Orb_0]$ sa classe de $G$-conjugaison et $[\Orb_0]$ la classe de $G$-conjugaison de $\Orb_0$.

De même, on note $\Theta$ l'ensemble des couples $(P, \Orb)$ où $P = M N$ est un sous-groupe parabolique cuspidal de $G$ et $\Orb = X(M) . \sigma = \braces{\chi \sigma, \chi \in X(M)}$ est l'orbite par torsion sous $X(M)$ d'une classe d'équivalence de représentation $\sigma \in \Rep(M)$ irréductible cuspidale de $M$. Le groupe $G$ agit sur $\Theta$ par conjugaison et on note $\Theta/G$ l'ensemble des orbites. Pour $(P, \Orb) \in \Theta$, on note $[P, \Orb]$ sa classe de $G$-conjugaison et $[\Orb]$ la classe de $G$-conjugaison de $\Orb$.

Si $\sigma_0 \in \Rep(M^0)$ est une représentation irréductible cuspidale de $M^0$, alors il existe $\sigma \in \Rep(M)$ une représentation irréductible cuspidale de $M$ telle que $\sigma \hookrightarrow \Ind_{M^0}^M \, \sigma_0$. La représentation $\sigma$ est unique à $X_0(M)$-torsion près. Et on peut écrire
\[
	\Ind_{M^0}^M \, \sigma_0 \simeq m(\sigma_0) \bigoplus_{\psi \in X_0(M)/X_0(\sigma)} \psi \sigma
\]
Par ailleurs, pour tout $\chi \in X(M)$, en notant $\chi_0 = \chi_{|M^0}$, on a
\begin{align*}
	\Hom_{M}( \chi \sigma, \Ind_{M^0}^M \, \chi_0 \sigma_0) &\simeq \Hom_{M^0}((\chi \sigma)_{|M^0}, \chi_0 \sigma_0)\\
	& =  \Hom_{M^0}( \chi_0\sigma_{|M^0}, \chi_0\sigma_0)\\
	& =  \Hom_{M^0}( \sigma_{|M^0}, \sigma_0)\\
	& \simeq \Hom_{M}( \sigma, \Ind_{M^0}^M \, \sigma_0)
\end{align*}
Donc $\chi \sigma \hookrightarrow \Ind_{M^0}^M \, \chi_0 \sigma_0$. Et $m(\chi \sigma) = \dim \Hom_{M}( \chi \sigma, \Ind_{M^0}^M \, \chi_0 \sigma_0) = \dim \Hom_{M}(\sigma, \Ind_{M^0}^M \, \sigma_0) = m(\sigma)$, donc $m(\sigma)$ ne dépend que de l'orbite $\Orb$ de $\sigma$, on notera $m(\Orb)$ cette quantité. De même, on a $X_0(\chi\sigma) = X_0(\sigma)$, donc on note $X_0(\Orb)$ ce groupe. Et pour tout $\chi \in X(M)$
\[
	\Ind_{M^0}^M \, \chi_0 \sigma_0 \simeq m(\Orb) \bigoplus_{\psi \in X_0(M)/X_0(\Orb)} \psi \chi \sigma
\]
On peut donc définir une application $\Lambda : \Theta_0 \to \Theta$ telle que pour tout $(P^0, \Orb_0) \in \Theta_0$, en notant $(P, \Orb) = \Lambda((P^0, \Orb_0))$, alors $P^0 \subset \cdot P$ (l'unicité de $P$ est assurée par \cite{Goldberg} proposition 2.10) et pour tout $\sigma_0 \in \Orb_0$, $\text{JH}(\Ind_{M^0}^M \sigma_0) \subset \cdot \Orb$. Par réciprocité de Frobenius, on voit que l'application $\Lambda$ est surjective et d'après le lemme \ref{lemme:decomposition_restriction}, on sait que l'image réciproque de $(P, \Orb) \in \Theta$ est une classe de $M$-conjugaison dans $\Theta_0$. Et comme $g \sigma_0 \hookrightarrow (g \sigma)_{|gM^0}$ pour tout $g \in G$, on vérifie que $\Lambda$ induit une bijection $\overline{\Lambda} : \Theta_0/G \to \Theta/G$.

Soit $(P^0, \Orb_0) \in \Theta_0$. Si $\psi : \Orb_0 \to \C$ est une fonction méromorphe dont tous les pôles sont contenus dans un compact de bord orienté $\Gamma$, on note
\[
	\int_{\Orb_0} \psi(\sigma_0) \, \D \sigma_0 = \int_{\Gamma} \psi(\sigma_0) \, \D \sigma_0
\]
Le théorème des résidus garantit que l'intégrale ne dépend pas du choix du compact contenant les pôles (ce qui justifie donc de l'omettre dans la notation). Si $\psi : \Orb \to \C$ est une fonction méromorphe et si $(P, \Orb) = \Lambda((P^0, \Orb_0))$, alors la fonction $\sigma \mapsto \sum_{\chi \in X_0(M)}  \psi(\chi \sigma)$ ne dépend que de l'orbite de $\sigma$ sous $X_0(M)$. On peut donc la voir comme une fonction sur $X(M^0)$, et noter
\[
	\int_{\Orb} \psi(\sigma) \ \D \sigma =  \int_{\Orb_0} \sum_{\chi \in X_0(M)}  \psi(\chi \sigma)\ \D \sigma_0
\]

\subsubsection{Une formule pour $f_s$} 
\label{ssub:une_formule_pour_f_s_}

Soient $s \in S$, $\sigma_0 \in \Orb_0$, en notant $\pi_{\sigma_0} = \Ind_{P^0}^{G^0} \, \sigma_0$ et $\Pi_{\sigma_0} = \Ind_{P^0}^{G} \, \sigma_0$, la fonction $\sigma_0 \mapsto \Tr(\Pi_{\sigma_0}(g^{-1} s) \, \zeta_s(\pi_{\sigma_0}))$ est rationnelle 
(ici on identifié $\zeta_s(\pi_{\sigma_0})$ à un élément de $\End_{\C}(\Pi_{\sigma_0})$ via le plongement évident $\pi_{\sigma_0} \hookrightarrow \Pi_{\sigma_0}$), et la fonction $g \mapsto \Tr(\Pi_{\sigma_0}(g^{-1} s) \, \zeta_s(\pi_{\sigma_0}))$ est à support dans $s G^0$ (et vaut $\Tr(\pi_{\sigma_0}(g^{-1} s) \, \zeta_s(\pi_{\sigma_0}))$ sur $sG^0$). On en déduit d'après \cite{Heiermann} théorème 3.2, que pour tout $g \in G$
\begin{equation}
	f_s(s^{-1} g) = \sum_{(P^0, \Orb_0) \in \Theta_0} c(\Orb_0) \int_{\Orb_0} \Tr(\Pi_{\sigma_0}(g^{-1} s) \, \zeta_s(\pi_{\sigma_0})) \, \D \sigma_0
\end{equation}
Où $c(\Orb_{\sigma_0}) > 0$ est une constante strictement positive donnée dans \cite{Heiermann} 3.2 par
\[
	c(\Orb_0) =  \frac{ |W^{M^0}| . |W(M^0, \Orb_0)| . \gamma(G^0 | M^0) . d(\Orb_0)}{|\mathcal{P}(M^0)| . |W^{G^0}| . |\text{Stab}_{X(M^0)}(\Orb_0)|}
\]
On vérifie que pour tout $g \in G$, $c(g \Orb_0) = c(\Orb_0)$. On notera donc $c([\Orb_0])$ cette quantité. On réunit les termes par $G$-orbites.
\[
	f_s(s^{-1} g) = \sum_{[P^0, \Orb_0] \in \Theta_0/G} \sum_{t \in G/G(t \cdot \Orb_0)} c([\Orb_0]) \int_{t \cdot \Orb_0} \Tr(\Pi_{\sigma_0}(g^{-1} s) \, \zeta_s(\pi_{\sigma_0})) \, \D \sigma_0
\]
Or pour $t \in G$, on a $\Pi_{\sigma_0} = \Ind_{P^0}^G \, \sigma_0 \underset{\lambda(t)}{\simeq} \Ind_{t \cdot P^0}^G \, t \cdot \sigma_0 = \Pi_{t \cdot \sigma_0}$, donc 
\begin{align*}
	\int_{t \cdot \Orb_0} \Tr(\Pi_{\sigma_0}(g^{-1} s) \, \zeta_s(\pi_{\sigma_0})) \, \D \sigma_0 &= \int_{\Orb_0} \Tr(\Pi_{t \cdot \sigma_0}(g^{-1} s) \, \zeta_s(\pi_{t \cdot \sigma_0})) \, \D \sigma_0 \\
	&= \int_{\Orb_0} \Tr(\Pi_{\sigma_0}(g^{-1} s) \, \zeta_s(t \cdot \pi_{\sigma_0})) \, \D \sigma_0
\end{align*}
Et d'après le lemme \ref{lemme:0.2s_Ind_P0}, on a
\begin{align*}
	\sum_{t \in S} \int_{t \cdot \Orb_{\sigma_0}} \Tr(\Pi_0(g^{-1} s) \, \zeta_s(\pi_{\sigma_0})) \, \D \sigma_0 &= \int_{\Orb_0} \Tr \pars{\Pi_{\sigma_0}(g^{-1} s) \, \bigoplus_{t \in S} \zeta_s(\pi_{t \cdot \sigma_0}) }\, \D \sigma_0 \\
	&= \int_{\Orb_0} \Tr(\Pi_{\sigma_0}(g^{-1} s) \, \zeta_s(\Pi_{\sigma_0})) \, \D \sigma_0
\end{align*}
Et donc, en posant $c'([\Orb_0]) = \frac{c([\Orb_0])}{|G(\Orb_0)/G^0|}$, on a
\begin{equation} \label{eq:formule_fs}
	f_s(s^{-1} g) = \sum_{[P^0, \Orb_0] \in \Theta_0/G} c'([\Orb_0]) \int_{\Orb_0} \Tr(\Pi_{\sigma_0}(g^{-1} s) \, \zeta_s(\Pi_{\sigma_0})) \, \D \sigma_0
\end{equation}

\subsubsection{Une formule pour $f$} 
\label{ssub:une_formule_pour_f_}

\begin{prop}
	\label{prop:formule_inversion}
	Il existe des constantes $C([\Orb]) > 0$ pour $[P, \Orb] \in \Theta/G$ telles que l'on a
	\begin{equation}
		f(g) = \sum_{[P, \Orb] \in \Theta/G} C([\Orb]) \int_{\Orb} \Tr \pars{\Pi_{\sigma}(g^{-1})  \zeta(\Pi_{\sigma})} \, \D \sigma
	\end{equation}
\end{prop}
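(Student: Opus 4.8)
Le plan consiste à partir de la formule $(\ref{eq:formule_fs})$ déjà établie pour les $f_s$ et à la sommer sur $s \in S$, puis à réindexer les $G$-orbites de $\Theta_0$ en $G$-orbites de $\Theta$ via la bijection $\overline{\Lambda}$.

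On commencerait par observer que, dans $f = \sum_{s \in S} \delta_s * f_s$, on a $f(g) = \sum_{s \in S} f_s(s^{-1}g)$, chaque terme étant à support dans $sG^0$, et que le second membre de $(\ref{eq:formule_fs})$ est lui aussi nul pour $g \notin sG^0$. En effet, d'après la relation $\zeta_s(\Ind_{P^0}^G \sigma_0) = \bigoplus_{t \in S} \zeta_s(\Ind_{t \cdot P^0}^{G^0} t \cdot \sigma_0)$ de \ref{ssub:sur_les_representations_du_type_p^0_g_,_sigma_0_}, l'endomorphisme $\zeta_s(\Pi_{\sigma_0})$ est diagonal pour la décomposition de $(\Pi_{\sigma_0})_{|G^0}$ en blocs indexés par $G/G^0$, tandis que $\Pi_{\sigma_0}(g^{-1}s)$ permute ces blocs de façon non triviale lorsque $g^{-1}s \notin G^0$ : la trace s'annule donc. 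On peut alors sommer $(\ref{eq:formule_fs})$ sur tout $s \in S$, échanger les deux sommes finies, et invoquer la relation $\sum_{s \in S} \Ind_{P^0}^G \sigma_0(s) \circ \zeta_s(\Ind_{P^0}^G \sigma_0) = \zeta(\Ind_{P^0}^G \sigma_0)$ de \ref{ssub:sur_les_representations_du_type_p^0_g_,_sigma_0_} pour arriver à
\[
	f(g) = \sum_{[P^0, \Orb_0] \in \Theta_0/G} c'([\Orb_0]) \int_{\Orb_0} \Tr\pars{\Pi_{\sigma_0}(g^{-1}) \, \zeta(\Pi_{\sigma_0})} \, \D \sigma_0 .
\]

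Il faudrait ensuite réindexer cette somme. Pour $[P^0, \Orb_0] \in \Theta_0/G$ d'image $(P, \Orb) = \Lambda((P^0, \Orb_0))$, on utiliserait la décomposition $\Pi_{\sigma_0} = \Ind_{P^0}^G \sigma_0 \simeq \Ind_P^G(\Ind_{M^0}^M \sigma_0) \simeq m(\Orb) \bigoplus_{\psi \in X_0(M)/X_0(\Orb)} \Pi_{\psi\sigma}$, valable car $(\psi\sigma)_{|M^0} \simeq \sigma_0$ pour $\psi \in X_0(M)$, ainsi que la relation $\zeta(\Ind_P^G \sigma) = \Ind_P^G s_\sigma \circ \zeta(\Ind_{P^0}^G \sigma_0) \circ \Ind_P^G i_\sigma$ de \ref{ssub:sur_les_representations_du_type_ind_p_g_,_sigma_} appliquée à chaque facteur, après avoir choisi les $i_{\psi\sigma}, s_{\psi\sigma}$ compatibles avec la décomposition (au besoin en moyennant sur les $m(\Orb)$ copies, ce qui reste un choix licite puisque la relation caractérisant $\xi$ dans \ref{prop:0.2_Ind_P} est linéaire en $\xi$). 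Comme $\Pi_{\sigma_0}(g^{-1})$ est $G$-équivariant, donc commute aux $G$-projecteurs sur les facteurs, la cyclicité de la trace donnerait alors
\[
	\Tr\pars{\Pi_{\sigma_0}(g^{-1}) \zeta(\Pi_{\sigma_0})} = m(\Orb) \sum_{\psi \in X_0(M)/X_0(\Orb)} \Tr\pars{\Pi_{\psi\sigma}(g^{-1}) \zeta(\Pi_{\psi\sigma})} = \frac{m(\Orb)}{|X_0(\Orb)|} \sum_{\psi \in X_0(M)} \Tr\pars{\Pi_{\psi\sigma}(g^{-1}) \zeta(\Pi_{\psi\sigma})} .
\]
En intégrant sur $\Orb_0$ et en reconnaissant la définition $\int_\Orb \Psi(\sigma) \, \D\sigma = \int_{\Orb_0} \sum_{\chi \in X_0(M)} \Psi(\chi\sigma) \, \D\sigma_0$ appliquée à $\Psi(\sigma) = \Tr(\Pi_\sigma(g^{-1}) \zeta(\Pi_\sigma))$, on obtiendrait
\[
	\int_{\Orb_0} \Tr\pars{\Pi_{\sigma_0}(g^{-1}) \zeta(\Pi_{\sigma_0})} \, \D\sigma_0 = \frac{m(\Orb)}{|X_0(\Orb)|} \int_\Orb \Tr\pars{\Pi_\sigma(g^{-1}) \zeta(\Pi_\sigma)} \, \D\sigma .
\]
Les quantités $c'([\Orb_0])$, $m(\Orb)$, $|X_0(\Orb)|$ étant invariantes par $G$-conjugaison, on poserait $C([\Orb]) = c'([\Orb_0]) \, m(\Orb) / |X_0(\Orb)| > 0$, et la bijectivité de $\overline{\Lambda}$ donnerait la formule voulue.

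L'obstacle principal est l'identité de trace ci-dessus : il s'agit de justifier proprement que la trace de $\Pi_{\sigma_0}(g^{-1}) \zeta(\Pi_{\sigma_0})$ se scinde bien en la somme des traces sur les blocs $\Pi_{\psi\sigma}$ — alors même que $\zeta(\Pi_{\sigma_0})$ n'est pas $G$-équivariant —, ce qui demande de contrôler simultanément la compatibilité des choix de $i_{\psi\sigma}, s_{\psi\sigma}$ avec la décomposition et la multiplicité $m(\Orb)$, via le procédé de moyennisation. Les vérifications restantes (annulation hors de $sG^0$, échanges de sommes finies, invariance par $G$-conjugaison et positivité des constantes) sont routinières.
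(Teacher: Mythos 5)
Votre démarche est essentiellement celle du papier : sommation de $(\ref{eq:formule_fs})$ sur $s \in S$, utilisation de la relation $\zeta(\Ind_{P^0}^G \sigma_0) = \sum_{s} \Pi_{\sigma_0}(s)\,\zeta_s(\Ind_{P^0}^G \sigma_0)$, puis réindexation par $\overline{\Lambda}$ en scindant la trace selon la décomposition $\Ind_{P^0}^G \sigma_0 \simeq m(\Orb) \bigoplus_{\psi \in X_0(M)/X_0(\Orb)} \Pi_{\psi\sigma}$, et passage de la somme sur cosets à la somme sur $X_0(M)$ via la division par $|X_0(\Orb)|$. La constante $C([\Orb]) = c'([\Orb])\,m([\Orb])/|X_0(\Orb)|$ est la même.

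Deux remarques. D'une part, votre préambule sur l'annulation du membre de droite de $(\ref{eq:formule_fs})$ hors de $sG^0$ (via le caractère bloc-diagonal de $\zeta_s(\Pi_{\sigma_0})$ face au caractère bloc-permutant de $\Pi_{\sigma_0}(g^{-1}s)$) est correct mais superflu : on somme sur $s$ sans avoir besoin de localiser chaque terme. D'autre part — et c'est le point le plus intéressant — vous identifiez explicitement que le scindage de la trace en blocs $\Pi_{\psi\sigma}$ n'est pas automatique lorsque $m(\Orb) > 1$ (ce qui peut se produire si $G/G^0$ n'est pas cyclique), car $\zeta(\Pi_{\sigma_0})$ n'est pas $G$-équivariant et peut a priori mélanger les $m(\Orb)$ copies à l'intérieur d'une même composante isotypique. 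Le papier effectue ce passage sans commentaire. Votre suggestion de choisir les $i_{\psi\sigma}, s_{\psi\sigma}$ compatibles avec la décomposition, quitte à moyenniser sur les copies (ce qui préserve bien la relation caractérisant $\xi$ par linéarité et exploite la trace partielle sur le facteur $\C^{m(\Orb)}$), est un moyen licite de combler ce point. C'est donc une version plus soigneuse du même argument, qui met le doigt sur une étape que le papier laisse implicite.

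Une imprécision mineure : vous justifiez la décomposition $\Ind_{P^0}^G \sigma_0 \simeq m(\Orb)\bigoplus_\psi \Pi_{\psi\sigma}$ par « $(\psi\sigma)_{|M^0} \simeq \sigma_0$ », ce qui est faux en général (la restriction $\sigma_{|M^0}$ peut avoir plusieurs composantes $M$-conjuguées). La vraie raison est le lemme \ref{lemme:restriction_a_H} appliqué à $M/M^0$, c'est-à-dire $\Ind_{M^0}^M \sigma_0 \simeq m(\Orb) \bigoplus_{\psi \in X_0(M)/X_0(\Orb)} \psi\sigma$, puis l'induction $\Ind_P^G$.
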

\begin{proof}
	En sommant l'équation (\ref{eq:formule_fs}) sur $s \in S$, on tire
	\begin{align*}
		f(g) &= \sum_{s \in S} f_s(s^{-1} g) \\
		&= \sum_{s \in S}  \sum_{[P^0, \Orb_0] \in \Theta_0/G} c'([\Orb_0]) \int_{\Orb_0} \Tr(\Pi_{\sigma_0}(g^{-1} s) \, \zeta_s(\Pi_{\sigma_0})) \, \D \sigma_0 \\
		&=  \sum_{[P^0, \Orb_0] \in \Theta_0/G} c'([\Orb_0]) \int_{\Orb_0} \Tr \pars{\Pi_{\sigma_0}(g^{-1})  \sum_{s \in S }\Pi_{\sigma_0}(s) \, \zeta_s(\Pi_{\sigma_0})} \, \D \sigma_0
	\end{align*}
	Et en vertu du lemme \ref{lemme:0.2_Ind_P0}, on tire finalement
	\begin{equation}
		f(g) = \sum_{[P^0, \Orb_0] \in \Theta_0/G} c'([\Orb_0]) \int_{\Orb_0} \Tr \pars{\Pi_{\sigma_0}(g^{-1})  \zeta(\Pi_{\sigma_0})} \, \D \sigma_0
	\end{equation}
	On note $[P, \Orb] = \overline{\Lambda}([P^0, \Orb_0])$ comme en section \ref{ssub:notations}. Puisque $\overline{\Lambda}$ établit une bijection entre $\Theta_0/G$ et $\Theta/G$, on peut noter $c'([\Orb]) = c'([\Orb_0])$ sans équivoque.
	%
	En notant $\Pi_{\sigma} = \Ind_P^G \, \sigma$ pour $\sigma \in \Orb$, on a alors
	\begin{align*}
		f(g) &= \sum_{[P^0, \Orb_0] \in \Theta_0/G} c'([\Orb]) \int_{\Orb_0} m([\Orb]) \sum_{\chi \in X_0(M)/X_0(\Orb)} \Tr \pars{ \Pi_{\chi\sigma}(g^{-1})  \zeta(\Pi_{\chi \sigma})} \, \D \sigma_0 \\
		&= \sum_{[P^0, \Orb_0] \in \Theta_0/G} \frac{c'([\Orb]) \, m([\Orb])}{|X_0(\Orb)|} \int_{\Orb_0} \sum_{\chi \in X_0(M)} \Tr \pars{ \Pi_{\chi\sigma}(g^{-1})  \zeta(\Pi_{\chi \sigma})} \, \D \sigma_0 
	\end{align*}
	On remarque que pour $t \in S$, on a $X_0(t \cdot \Orb) = t \cdot X_0(\Orb)$, donc le cardinal $|X_0(\Orb)|$ ne dépend en fait que $[\Orb]$. En notant ``$\int_{\Orb} \ \D \sigma = \int_{\Orb_0} \sum_{\chi \in X_0(M)} \ \D \sigma_0$'', et $C([\Orb]) = \frac{c'([\Orb]) \, m([\Orb])}{|X_0(\Orb)|} > 0$, on peut alors réécrire
	\[
		f(g) = \sum_{[P, \Orb] \in \Theta/G} C([\Orb]) \int_{\Orb} \Tr \pars{\Pi_{\sigma}(g^{-1})  \zeta(\Pi_{\sigma})} \, \D \sigma
	\]
	Puis en tirant parti de la bijection $\overline{\Lambda}$ définie en \ref{ssub:notations},
	\begin{equation*}
		f(g) = \sum_{[P, \Orb] \in \Theta/G} C([\Orb]) \int_{\Orb} \Tr \pars{\Pi_{\sigma}(g^{-1})  \zeta(\Pi_{\sigma})} \, \D \sigma
	\end{equation*}
\end{proof}

\bibliographystyle{plain}
\bibliography{biblio_clean}

\end{document}